\newtheorem{theorem}{Theorem}
\newtheorem{lemma}[theorem]{Lemma}
\newtheorem{example}[theorem]{Example}
\newtheorem{corollary}[theorem]{Corollary}
\newtheorem{proposition}[theorem]{Proposition}
\newtheorem{remark}[theorem]{Remark}
\newtheorem{definition}{Definition}
\newtheorem{assumption}{Assumption}
\title{Uniform Convergence of the Kernel Density Estimator \\
	Adaptive to Intrinsic Volume Dimension}
\date{}
\author[1]{Jisu Kim}
\author[2]{Jaehyeok Shin}
\author[2]{Alessandro Rinaldo}
\author[2]{Larry Wasserman}
\affil[1]{Inria Saclay -- \^{I}le-de-France}
\affil[2]{Department of Statistics and Data Science, Carnegie Mellon University}
\begin{document}
	
\maketitle

\begin{abstract}
We derive concentration inequalities for the supremum norm of the difference between a kernel density estimator (KDE) and its point-wise expectation that hold uniformly over the selection of the bandwidth and under weaker conditions on the kernel and the data generating distribution than previously used in the literature. We first propose a novel concept, called the volume dimension, to measure the intrinsic dimension of the support of a probability distribution based on the rates of decay of the probability of vanishing Euclidean balls. Our bounds depend on the volume dimension and generalize the existing bounds derived in the literature. In particular, when the data-generating distribution has a bounded Lebesgue density or is supported on a sufficiently well-behaved lower-dimensional manifold, our bound recovers the same convergence rate depending on the intrinsic dimension of the support as ones known in the literature. At the same time, our results apply to more general cases, such as the ones of distribution with unbounded densities or supported on a mixture of manifolds with different dimensions. Analogous bounds are derived for the derivative of the KDE, of any order. Our results are generally applicable but are especially useful for problems in geometric inference and topological data analysis, including level set estimation, density-based clustering, modal clustering and mode hunting, ridge estimation and persistent homology.
\end{abstract}

\section{Introduction}
\label{sec:intro} 

Density estimation \citep[see, e.g.][]{rao1983nonparametric} is a classic and fundamental problem in non-parametric statistics that, especially in recent years, has also become a key step in many geometric inferential tasks. Among the numerous existing methods for density estimation, kernel density estimators (KDEs) are especially popular because of their conceptual simplicity and nice theoretical properties. A KDE is simply the Lebesgue density  of the probability distribution obtained by  convolving the empirical measure induced by the sample with an appropriate function, called kernel, \citep{parzen1962estimation, wand1994kernel}.
Formally, let $X_{1},\ldots,X_{n}$ be an independent and identically distributed sample from an unknown Borel probability distribution $P$ in $\mathbb{R}^d$. For a given kernel $K$, where $K$ is an appropriate function on $\mathbb{R}^d$ (often a density), and bandwidth $h > 0$, the corresponding KDE is the random Lebesgue density function defined as 
\begin{equation}
\label{eq:kde_intro}
x \in \mathbb{R}^d \mapsto \hat{p}_{h}(x):=\frac{1}{nh^{d}}\sum_{i=1}^{n}K\left(\frac{x-X_{i}}{h}\right).
\end{equation}
The point-wise expectation of the KDE is the function 
\[
x \in \mathbb{R}^d \mapsto p_h(x) := \mathbb{E}[\hat{p}_h(x)],
\]
and can be regarded as a smoothed version of the density of $P$, if such a density exists.
In fact, interestingly, both $\hat{p}_{h}$ and $p_h$ are Lebesgue probability densities for any choice of $h>0$, regardless of whether $P$ admits a Lebesgue density. What is more,  $p_h$ is often times able to capture important topological properties of the underlying distribution $P$ or of its support \citep[see, e.g.][Section 4.4]{FasyLRWBS2014}. For instance, if a data-generating distribution consists of two point masses, it has no Lebesgue density but the pointwise mean of the KDE with Gaussian kernel is a density of mixtures of two Gaussian distributions whose mean parameters are the two point masses. Although $P$ is quite different from the distribution corresponding to $p_h$, for practical purposes, one may in fact rely on $p_h$.

Though seemingly contrived, the previous example illustrates a general phenomenon encountered in many geometrical inference problems, namely that using $p_h$ as a target for inference leads to not only well-defined statistical tasks but also to faster or even dimension independent rates. Results of this form, which require a uniform control over $\| \hat{p}_h - p_h \|_\infty := \sup_{x \in \mathbb{R}^d} \| \hat{p}_h(x) - p_h(x)\|$ are plentiful in the literature on density-based clustering \citep{RinaldoW2010, wang2017optimal}, modal clustering and mode hunting \citep{chacon2015population, azizyan2015risk},  mean-shift clustering \citep{arias2016estimation}, ridge estimation \citep{chen2015optimal, chen2015asymptotic} and inference for density level sets \citep{chen2017density}, cluster density trees \citep{balakrishnan2013cluster, KimCBRW2016} and persistent diagrams \citep{FasyLRWBS2014, ChazalFLMRW2014r}.



Asymptotic and finite-sample bounds on $\|\hat{p}_h - p_h\|_\infty$ under the existence of Lebesgue density have been well-studied for fixed bandwidth cases \citep{rao1983nonparametric, gine2002rates, Sriperumbudur2012, SteinwartST2017}.

Bounds for KDEs not only uniform in $x \in \mathbb{R}^d$ but also with respect the choice of the bandwidth $h$ have received relatively less attentions, although such bounds are important to analyze the consistency of  KDEs with adaptive bandwidth, which may depend on the location $x$. \citet{einmahl2005uniform} showed that,
\[
\limsup_{n \rightarrow \infty}\sup_{(c\log n) / n \leq h \leq 1} \frac{\sqrt{nh^d}\|\hat{p}_h - p_h\|_\infty}{\sqrt{\log (1/h) \vee \log \log n }} < \infty,
\]
for regular kernels and bounded Lebesgue densities. \citet{jiang2017uniform} provided a finite-sample bound on $\|\hat{p}_h - p_h\|_\infty$ that holds uniformly on $h$ and under appropriate assumptions on $K$, and extended it to case of densities over well-behaved manifolds.

The main goal of this paper is to extend existing uniform bounds on KDEs by weakening the conditions on the kernel and making it adaptive to the intrinsic dimension of the underlying distribution. We first propose a novel concept, called the {\it volume dimension,} to characterize the intrinsic dimension of the underlying distribution. In detail, 
the volume dimension $d_{\mathrm{vol}}$ is the rate of decay of the probability of vanishing Euclidean balls, i.e. fix a subset $\mathbb{X}\subset\mathbb{R}^{d}$, then
\[
d_{{\rm vol}}=\sup\left\{ \nu\in\mathbb{R}:\,\limsup_{r\to 0}\sup_{x\in\mathbb{X}}\frac{P(\mathbb{B}_{\mathbb{R}^{d}}(x,r))}{r^{\nu}}<\infty\right\}.
\]
We show that, if $K$ satisfies mild regularity conditions, with probability at least $1-\delta$,
\begin{equation}
\sup\limits_{h \geq l_{n}, x\in\mathbb{X}}\left|\hat{p}_{h}(x)-p_{h}(x)\right|
\leq C\sqrt{\frac{\left(\log\left(1/l_{n}\right)\right)_{+}+\log\left(2/\delta\right)}{n l_{n}^{2d-d_{\mathrm{vol}}+\epsilon}}},
\label{eq:bound_upper_intro}
\end{equation}
for any $\epsilon\in(0,d_{{\rm vol}})$, $\{ l_n \}$ a positive sequence approaching $0$ and $C$ is a constant that does not depend on $n$ nor $l_n$. Under additional, weak regularity conditions on $P$, the quantity $\epsilon$ can be taken to be $0$ in \eqref{eq:bound_upper_intro}. If the distribution has a bounded Lebesgue density, $d_{\mathrm{vol}} = d$ so our result recovers existing results in literature in terms of rates of convergence. For a bounded density on a $d_M$-dimensional manifold we obtain, under appropriate conditions, that $d_{\mathrm{vol}} = d_M$. Thus, if KDEs are defined with a correct normalizing factor $h^{d_M}$ instead of $h^d$, our rate also recovers the ones in the literature on density estimation over manifolds. At the same time, our bounds apply to more general cases, such as a distribution with an unbounded density or supported on a mixture of manifolds with different dimensions. We have also shown the optimality of \eqref{eq:bound_upper_intro} up to log terms by showing that
under the mild regularity conditions on $K$ and $P$, 
\begin{equation}
\sup\limits_{h\geq l_{n}, x\in\mathbb{X}}\left|\hat{p}_{h}(x)-p_{h}(x)\right|\geq C'\sqrt{\frac{1}{n l_{n}^{2d-d_{\mathrm{vol}}}}}.
\label{eq:bound_lower_intro}
\end{equation}

We make the following contributions: 
\begin{enumerate}
    \item We propose a novel concept, called the volume dimension, to characterize the convergence rate of the KDE on arbitrary distributions.
	\item We derive high probability finite sample bounds for $\| \hat{p} - p_h \|_\infty$, uniformly over the choice of $h \geq l_n$, for a given $l_n$ depending on $n$.
	\item We derive rates of consistency in the $\text{\L}_\infty$ norm  that are adaptive to the volume dimension of the distribution under  conditions on the kernel that, to the best of our knowledge, are weaker than the ones existing in the literature, and without assumptions on the distribution. Hence, our bounds recover known previous results, and apply to more general cases such as a distribution with unbounded density or supported on a mixture of manifolds with different dimensions.
	\item We show that our bound is optimal up to log terms under weak conditions on the kernel and the distribution.
	\item We also obtain analogous bounds for all higher order derivatives of $\hat{p}_h$ and $p_h$. 
\end{enumerate}

The closest results to the ones we present are by \citet{jiang2017uniform}, who relies on relative VC bounds to derive finite sample bounds on  $\| \hat{p}_h - p_h \|_\infty$ for a special class of kernels and assuming $P$ to have a well-behaved support. Our analysis relies instead on more sophisticated techniques rooted in the theory of empirical process theory as outlined in \cite{Sriperumbudur2012} and are applicable to a broader class of kernels. In addition, we do not assume any condition on the underlying distribution.

\section{Notation}
\label{sec:notation}
Below, we recap basic concepts and establish some notation that are used throughout the paper. For more detailed definitions, see Appendix~\ref{sec:definition}.

We let $\|\cdot\|$ be the  Euclidean $2$-norm. For $x\in\mathbb{R}^{d}$ and $r>0$, we use the notation $\mathbb{B}_{\mathbb{R}^{d}}(x,r)$ for the open Euclidean ball centered at $x$ and radius $r$, i.e. $\mathbb{B}_{\mathbb{R}^{d}}(x,r)=\{y\in\mathbb{R}^{d}:\,\left\Vert y-x\right\Vert <r\}$. We fix a subset $\mathbb{X}\subset\mathbb{R}^{d}$ on which we are considering the uniform convergence of the KDE.

The Hausdorff measure is a generalization of the Lebesgue measure to lower dimensional subsets of $\mathbb{R}^{d}$. The Hausdorff dimension is a generalization of the intrinsic dimension of a manifold to general sets. For $\nu\in \{1, \dots, d\}$, let $\lambda_{\nu}$ be a normalized
$\nu$-dimensional Hausdorff measure on $\mathbb{R}^{d}$ satisfying that its measure on any $\nu$-dimensional unit cube is $1$. We use the notation $\omega_{\nu}:=\lambda_{\nu}(\mathbb{B}_{\mathbb{R}^{\nu}}(0,1))=\frac{\pi^{\frac{\nu}{2}}}{\Gamma\left(\frac{\nu}{2}+1\right)}$ for the volume of the unit ball in $\mathbb{R}^{\nu}$ for $\nu = 1, \dots, d$.

First introduced by \citep{Federer1959}, the reach has been the minimal regularity assumption in the geometric measure theory. A manifold with positive reach means that the projection to the manifold is well defined in a small neighborhood of the manifold. 

\section{Volume Dimension}
\label{sec:dimension}
We first characterize the intrinsic dimension of a probability distribution in terms
of the rate of decay of the probability of  Euclidean balls of vanishing volumes. When a probability distribution $P$ has a
bounded density $p$ with respect to a well-behaved manifold $M$ of dimension
$d_{M}$, it is known that, for any point $x\in M$, the
measure on the ball $\mathbb{B}_{\mathbb{R}^{d}}(x,r)$ centered at
$x$ and radius $r$ decays as
\[
P\left(\mathbb{B}_{\mathbb{R}^{d}}(x,r)\right)\sim r^{d_{M}},
\]
when $r$ is small enough. From this, we define the volume dimension
to be the maximum possible exponent rate that can dominate the probability
volume decay on balls.

\begin{definition}[Volume Dimension]

\label{def:dimension_volume}

Let $P$ be a probability distribution on $\mathbb{R}^{d}$. The \emph{volume
dimension} of $P$ is a non-negative real number defined as 
\begin{equation}
d_{{\rm vol}}(P)
:=\sup\left\{ \nu \geq 0:\,\limsup_{r\to 0}\sup_{x\in\mathbb{X}}\frac{P(\mathbb{B}_{\mathbb{R}^{d}}(x,r))}{r^{\nu}}<\infty\right\} .\label{eq:kde_condition_prob_upper}
\end{equation} 
We will use the notation $d_{{\rm vol}}$ when $P$ is clearly specified by the context.

\end{definition}

The volume dimension has a connection with the Hausdorff dimension. If a probability distribution has a positive measure on a set, then the volume dimension is between $0$ and the Hausdorff dimension of the set. So, if that set is a manifold, then the volume dimension is always between $0$ and the dimension of the manifold.
In particular, the volume dimension of any probability distribution is between
$0$ and the ambient dimension $d$.

\begin{proposition}

\label{prop:dimension_manifold}

Let $P$ be a probability distribution on $\mathbb{R}^{d}$, and $d_{{\rm vol}}$
be its volume dimension.
Suppose there exists a set $A$ satisfying $P(A\cap\mathbb{X})>0$ and with Hausdorff dimension $d_{H}$. Then $0 \leq d_{{\rm vol}}\leq d_{H}$. Hence if $A$ is a $d_{M}$-dimensional manifold, then $0 \leq d_{{\rm vol}}\leq d_{M}$.
In particular, for any probability distribution $P$ on $\mathbb{R}^{d}$,  $0 \leq d_{{\rm vol}}\leq d$.
Also, if $P$ has a point mass, i.e. there exists $x\in\mathbb{X}$
with $P(\{x\})>0$, then $d_{{\rm vol}} = 0$.
\end{proposition}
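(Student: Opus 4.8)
The plan is to establish the four claims in turn, the inequality $d_{{\rm vol}}\le d_{H}$ being the substantive one. Non-negativity is immediate: $\nu=0$ always lies in the set defining $d_{{\rm vol}}$ in \eqref{eq:kde_condition_prob_upper}, since $P(\mathbb{B}_{\mathbb{R}^{d}}(x,r))\le 1$, so that set is non-empty and $d_{{\rm vol}}\ge 0$. Next I would record that this set is downward closed: if its defining limsup is finite for some $\nu$ and $0\le\nu'<\nu$, then $P(\mathbb{B}_{\mathbb{R}^{d}}(x,r))/r^{\nu'}\le P(\mathbb{B}_{\mathbb{R}^{d}}(x,r))/r^{\nu}$ whenever $r<1$, so it is finite for $\nu'$ too. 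Consequently, for every $\nu<d_{{\rm vol}}$ the defining limsup is finite, hence there exist $C<\infty$ and $r_{0}\in(0,1)$ with $P(\mathbb{B}_{\mathbb{R}^{d}}(x,r))\le C\,r^{\nu}$ for all $x\in\mathbb{X}$ and all $0<r<r_{0}$; this local decay estimate is the only consequence of $\nu<d_{{\rm vol}}$ I will use.

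To prove $d_{{\rm vol}}\le d_{H}$, suppose for contradiction that $d_{{\rm vol}}>d_{H}$ and fix $\nu$ with $d_{H}<\nu<d_{{\rm vol}}$, together with the associated constants $C,r_{0}$ from the previous step. Since $A$ has Hausdorff dimension $d_{H}<\nu$, its $\nu$-dimensional Hausdorff measure $\mathcal{H}^{\nu}(A)$ vanishes, and because $\mathcal{H}^{\nu}_{\delta}\uparrow\mathcal{H}^{\nu}$ as $\delta\downarrow 0$ this gives, for every $\eta>0$ and every $\delta\in(0,r_{0}/2)$, a countable cover $A\subseteq\bigcup_{i}U_{i}$ with $\operatorname{diam}(U_{i})\le\delta$ and $\sum_{i}(\operatorname{diam}U_{i})^{\nu}<\eta$. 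Discard the $U_{i}$ that miss $A\cap\mathbb{X}$ (the rest still cover $A\cap\mathbb{X}$) and, for each remaining one, pick a point $x_{i}\in U_{i}\cap A\cap\mathbb{X}$; then $U_{i}\subseteq\mathbb{B}_{\mathbb{R}^{d}}(x_{i},2\operatorname{diam}U_{i})$, a ball centered in $\mathbb{X}$ of radius below $r_{0}$. Summing the decay estimate over this cover,
\[
P(A\cap\mathbb{X})\le\sum_{i}P\bigl(\mathbb{B}_{\mathbb{R}^{d}}(x_{i},2\operatorname{diam}U_{i})\bigr)\le C\,2^{\nu}\sum_{i}(\operatorname{diam}U_{i})^{\nu}<C\,2^{\nu}\eta,
\]
and letting $\eta\downarrow 0$ forces $P(A\cap\mathbb{X})=0$, contradicting the hypothesis $P(A\cap\mathbb{X})>0$. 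Hence $d_{{\rm vol}}\le d_{H}$.

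The remaining assertions follow quickly. A $d_{M}$-dimensional manifold has Hausdorff dimension $d_{M}$, so taking $A$ to be that manifold gives $d_{{\rm vol}}\le d_{M}$. Taking $A=\mathbb{R}^{d}$, which has Hausdorff dimension $d$ and satisfies $P(A\cap\mathbb{X})=P(\mathbb{X})$, gives $d_{{\rm vol}}\le d$ for every $P$ (in the regime $P(\mathbb{X})>0$ implicit in the uniform-convergence problem; otherwise the admissible exponents can be unbounded). Finally, if $P(\{x_{0}\})>0$ for some $x_{0}\in\mathbb{X}$, then for any $\nu>0$ we have $\sup_{x\in\mathbb{X}}P(\mathbb{B}_{\mathbb{R}^{d}}(x,r))/r^{\nu}\ge P(\{x_{0}\})/r^{\nu}\to\infty$ as $r\to 0$, so no positive $\nu$ is admissible and $d_{{\rm vol}}=0$.

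The one step requiring care is the covering argument: the cover witnessing $\mathcal{H}^{\nu}(A)=0$ consists of arbitrary small sets, whereas the decay estimate only controls Euclidean balls centered at points of $\mathbb{X}$ and of radius below $r_{0}$. Enlarging each covering set to the ball of twice its diameter about a point of $A\cap\mathbb{X}$ it contains resolves this, at the harmless cost of a factor $2^{\nu}$; zero-diameter sets are singletons in $A\cap\mathbb{X}$, which carry no $P$-mass once $\nu>0$ (as the decay estimate itself shows) and may simply be ignored. Everything else is bookkeeping: that $\mathcal{H}^{\nu}(A)=0$ whenever $\nu$ exceeds the Hausdorff dimension of $A$, that efficient covers of arbitrarily small mesh therefore exist, and that a $d_{M}$-dimensional manifold has Hausdorff dimension $d_{M}$.
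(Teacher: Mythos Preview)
Your proof is correct and follows essentially the same covering argument as the paper: bound the $P$-mass of $A\cap\mathbb{X}$ by summing the ball-decay estimate over a Hausdorff cover, with each covering set replaced by a ball centered at one of its points in $\mathbb{X}$. The only cosmetic differences are that the paper argues directly (showing $H^{\nu}(A)>0$ for every $\nu<d_{{\rm vol}}$) rather than by contradiction, invokes Lemma~\ref{lem:dimension_volume_lower} to extend the decay bound to all radii rather than restricting the mesh below $r_{0}$, and handles the point-mass case by taking $A=\{x\}$ (Hausdorff dimension $0$) rather than by your direct computation.
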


The volume dimension is well defined with mixtures of distributions. Specifically, the volume dimension of the mixture is the minimum of the volume dimensions of the component distributions.  
\begin{proposition}
    \label{prop:mixture}
    Let $P_{1},\ldots,P_{m}$ be probability distributions on $\mathbb{R}^{d}$,  and  $\lambda_{1},\ldots,\lambda_{m}\in(0,1)$ with $\sum_{i=1}^{m}\lambda_{i}=1$. Then 
    \[
    d_{{\rm vol}}\left(\sum_{i=1}^{m}\lambda_{i}P_{i}\right)=\min\left\{ d_{{\rm vol}}(P_{i}):\,1\leq i\leq m\right\} .
    \]
    In particular, when $d_{{\rm vol}}$ is understood as a real-valued function on the space of probability distributions, both its sublevel sets and superlevel sets are convex.
\end{proposition}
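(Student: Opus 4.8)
The plan is to argue directly from Definition~\ref{def:dimension_volume}, using only the linearity of the mixture on balls, $P(\mathbb{B}_{\mathbb{R}^{d}}(x,r))=\sum_{i=1}^{m}\lambda_{i}P_{i}(\mathbb{B}_{\mathbb{R}^{d}}(x,r))$, together with two elementary facts. First, for finitely many nonnegative quantities one has $\sup_{x}\sum_{i}a_{i}(x)\le\sum_{i}\sup_{x}a_{i}(x)$ and $\limsup_{r\to 0}\sum_{i}b_{i}(r)\le\sum_{i}\limsup_{r\to 0}b_{i}(r)$. Second, for $r<1$ the map $\nu\mapsto r^{-\nu}$ is increasing, so for any probability distribution $Q$ the ``admissible exponent set'' $S_{Q}:=\{\nu\ge 0:\ \limsup_{r\to 0}\sup_{x\in\mathbb{X}}Q(\mathbb{B}_{\mathbb{R}^{d}}(x,r))/r^{\nu}<\infty\}$ is down-closed in $[0,\infty)$, hence an interval of the form $[0,d_{{\rm vol}}(Q))$ or $[0,d_{{\rm vol}}(Q)]$.

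For the inequality $d_{{\rm vol}}(P)\le\min_{i}d_{{\rm vol}}(P_{i})$, fix $i$ and note that $\lambda_{i}P_{i}(\mathbb{B}_{\mathbb{R}^{d}}(x,r))\le P(\mathbb{B}_{\mathbb{R}^{d}}(x,r))$ since the weights are positive; hence $P_{i}(\mathbb{B}_{\mathbb{R}^{d}}(x,r))/r^{\nu}\le\lambda_{i}^{-1}P(\mathbb{B}_{\mathbb{R}^{d}}(x,r))/r^{\nu}$, so $S_{P}\subseteq S_{P_{i}}$ and therefore $d_{{\rm vol}}(P)=\sup S_{P}\le\sup S_{P_{i}}=d_{{\rm vol}}(P_{i})$. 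Taking the minimum over $i$ gives the bound. For the reverse inequality, write $m^{*}:=\min_{i}d_{{\rm vol}}(P_{i})$; since $d_{{\rm vol}}(P)\ge 0$ always, we may assume $m^{*}>0$ and it suffices to show every $\nu\in(0,m^{*})$ lies in $S_{P}$. For such $\nu$ we have $\nu<d_{{\rm vol}}(P_{i})=\sup S_{P_{i}}$ for each $i$, so by down-closedness $\nu\in S_{P_{i}}$, i.e.\ $c_{i}:=\limsup_{r\to 0}\sup_{x\in\mathbb{X}}P_{i}(\mathbb{B}_{\mathbb{R}^{d}}(x,r))/r^{\nu}<\infty$. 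Then, using the two elementary facts above,
\[
\limsup_{r\to 0}\sup_{x\in\mathbb{X}}\frac{P(\mathbb{B}_{\mathbb{R}^{d}}(x,r))}{r^{\nu}}\ \le\ \sum_{i=1}^{m}\lambda_{i}\,\limsup_{r\to 0}\sup_{x\in\mathbb{X}}\frac{P_{i}(\mathbb{B}_{\mathbb{R}^{d}}(x,r))}{r^{\nu}}\ =\ \sum_{i=1}^{m}\lambda_{i}c_{i}\ <\ \infty,
\]
so $\nu\in S_{P}$; letting $\nu\uparrow m^{*}$ yields $d_{{\rm vol}}(P)\ge m^{*}$, and combining the two inequalities proves the displayed formula.

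For the concluding claim about convexity, recall that the set of Borel probability distributions on $\mathbb{R}^{d}$ is convex and that every finite convex combination is a mixture of the above form (weights equal to zero may simply be deleted, reducing to the case $\lambda_{i}\in(0,1)$). If each $P_{i}$ satisfies $d_{{\rm vol}}(P_{i})\le t$, then by the formula $d_{{\rm vol}}(\sum_{i}\lambda_{i}P_{i})=\min_{i}d_{{\rm vol}}(P_{i})\le t$, so sublevel sets are closed under convex combinations; the identical argument with ``$\ge t$'' handles superlevel sets.

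I do not expect a serious obstacle here: the argument is essentially bookkeeping with limsups and suprema. The one point requiring a little care — and the reason fact (ii) is isolated above — is that the supremum defining $d_{{\rm vol}}$ need not be attained, so one cannot assume $d_{{\rm vol}}(P_{i})\in S_{P_{i}}$; down-closedness of $S_{Q}$ (equivalently, monotonicity of $r\mapsto r^{-\nu}$ for $r<1$) is exactly what lets us carry out the lower-bound step by working with exponents strictly below $m^{*}$ and then passing to the limit.
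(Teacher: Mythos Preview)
Your proof is correct and follows essentially the same approach as the paper's: both directions rest on the dominance $P\ge\lambda_i P_i$ for the upper bound and on subadditivity of $\sup$ and $\limsup$ for the lower bound. Your presentation is slightly more streamlined (handling general $m$ directly rather than reducing to $m=2$, and phrasing things via the down-closed set $S_Q$), and you also spell out the convexity of level sets, which the paper leaves implicit.
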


The name ``volume dimension'' suggests that the volume dimension of a probability distribution has a connection with the dimension of the support. The two dimensions are indeed equal when the support is a manifold with positive reach and the probability distribution has a bounded density with respect to the uniform measure on the manifold (e.g. the Hausdorff measure). In particular when the probability distribution has a bounded density with respect to the $d$-dimensional Lebesgue measure,
the volume dimension equals the ambient dimension $d$.

\begin{proposition}

\label{prop:dimension_manifold_bounded_density}

Let $P$ be a probability distribution on $\mathbb{R}^{d}$, and
$d_{{\rm vol}}$ be its volume dimension. Suppose there exists a $d_{M}$-dimensional
manifold $M$ with positive reach satisfying $P(M\cap\mathbb{X})>0$ and ${\rm supp}(P)\subset M$.
If $P$ has a bounded density $p$ with respect to the normalized
$d_{M}$-dimensional Hausdorff measure $\lambda_{d_{M}}$, then  $d_{{\rm vol}}=d_{M}$.
In particular, when $P$ has a bounded density $p$ with respect
to the $d$-dimensional Lebesgue measure $\lambda_{d}$, then $d_{{\rm vol}}=d$.

\end{proposition}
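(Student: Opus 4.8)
The plan is to establish the two inequalities $d_{{\rm vol}}\le d_M$ and $d_{{\rm vol}}\ge d_M$ separately, and then read off the Lebesgue statement by specialising to $M=\mathbb{R}^d$. The upper bound is immediate from Proposition~\ref{prop:dimension_manifold}: since ${\rm supp}(P)\subset M$ and $P(M\cap\mathbb{X})>0$, and a $d_M$-dimensional manifold of positive reach (hence of class $C^{1,1}$) has Hausdorff dimension exactly $d_M$, applying that proposition with $A=M$ gives $d_{{\rm vol}}\le d_M$ with no further work.

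For the lower bound it suffices to show that
\[
\limsup_{r\to 0}\ \sup_{x\in\mathbb{X}}\ \frac{P\big(\mathbb{B}_{\mathbb{R}^d}(x,r)\big)}{r^{d_M}}<\infty ,
\]
because then, for every $\nu\le d_M$, $P(\mathbb{B}_{\mathbb{R}^d}(x,r))/r^{\nu}=\big(P(\mathbb{B}_{\mathbb{R}^d}(x,r))/r^{d_M}\big)\,r^{d_M-\nu}$ has finite $\limsup$, so $d_M$ lies in the set defining $d_{{\rm vol}}$ in Definition~\ref{def:dimension_volume}. Using that $P$ is supported on $M$ and has a density $p$ with $\|p\|_\infty<\infty$ with respect to $\lambda_{d_M}$,
\[
P\big(\mathbb{B}_{\mathbb{R}^d}(x,r)\big)=\int_{M\cap\mathbb{B}_{\mathbb{R}^d}(x,r)}p\,d\lambda_{d_M}\ \le\ \|p\|_\infty\,\lambda_{d_M}\big(M\cap\mathbb{B}_{\mathbb{R}^d}(x,r)\big).
\]
If $x\in\mathbb{X}$ has $d(x,M)>0$, the right-hand side vanishes for $r<d(x,M)$; since a set of positive reach is closed, the only remaining case is $x\in M\cap\mathbb{X}$. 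Thus everything reduces to a bound on $\lambda_{d_M}\big(M\cap\mathbb{B}_{\mathbb{R}^d}(x,r)\big)$ that is uniform over $x\in M$ for small $r$.

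This last step is the crux, and the place where positive reach enters. Writing $\tau={\rm reach}(M)>0$, for $x\in M$ and $r$ small compared to $\tau$ the nearest-point projection onto $M$ is well behaved, and near $x$ the manifold $M$ is the graph over a subset of the tangent space $T_xM$ of a $C^{1,1}$ map whose gradient is controlled by $r/\tau$; hence the orthogonal projection $M\cap\mathbb{B}_{\mathbb{R}^d}(x,r)\to T_xM$ is injective with Jacobian bounded by a constant depending only on $d_M$ (say for $r\le\tau/2$), and the area formula gives $\lambda_{d_M}\big(M\cap\mathbb{B}_{\mathbb{R}^d}(x,r)\big)\le c_{d_M}\,\omega_{d_M}\,r^{d_M}$, uniformly in $x$. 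This is the standard volume estimate for sets of positive reach (see, e.g., \citet{Federer1959} and the manifold-sampling literature), which I would either cite or re-derive in an appendix; I expect this \emph{uniform} control — as opposed to the merely pointwise asymptotics $\lambda_{d_M}(M\cap\mathbb{B}_{\mathbb{R}^d}(x,r))\sim\omega_{d_M}r^{d_M}$, which hold for any $C^1$ manifold — to be the only delicate point. Combining the two displays yields $\sup_{x\in\mathbb{X}}P(\mathbb{B}_{\mathbb{R}^d}(x,r))\le\|p\|_\infty c_{d_M}\omega_{d_M}r^{d_M}$ for all small $r$, hence $d_{{\rm vol}}\ge d_M$, and together with the upper bound $d_{{\rm vol}}=d_M$.

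Finally, for the Lebesgue special case take $M=\mathbb{R}^d$: it has reach $+\infty>0$ and Hausdorff dimension $d$, and $\lambda_d$ is Lebesgue measure with $\lambda_d(\mathbb{B}_{\mathbb{R}^d}(x,r))=\omega_d r^d$, so the argument above applies verbatim — indeed here $P(\mathbb{B}_{\mathbb{R}^d}(x,r))\le\|p\|_\infty\,\omega_d\,r^d$ holds for every $x\in\mathbb{R}^d$ — and gives $d_{{\rm vol}}=d$.
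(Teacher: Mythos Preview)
Your overall approach matches the paper's: the upper bound via Proposition~\ref{prop:dimension_manifold}, and the lower bound via $P(\mathbb{B}_{\mathbb{R}^d}(x,r))\le\|p\|_\infty\,\lambda_{d_M}(M\cap\mathbb{B}_{\mathbb{R}^d}(x,r))$ together with a uniform-in-$x$ volume estimate $\lambda_{d_M}(M\cap\mathbb{B}_{\mathbb{R}^d}(x,r))\lesssim r^{d_M}$ coming from positive reach. The paper derives that estimate via a tube-volume comparison (its Lemmas~\ref{lem:dimension_volume_bound_upper} and~\ref{lem:dimension_ball_volume_bound}), whereas you propose a graph-over-$T_xM$ plus area-formula argument; both are standard and lead to the same conclusion.

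There is, however, a small gap in your reduction step. The sentence ``the only remaining case is $x\in M\cap\mathbb{X}$'' does not justify what is needed: for each fixed small $r$ you must bound $\sup_{x\in\mathbb{X}}P(\mathbb{B}_{\mathbb{R}^d}(x,r))/r^{d_M}$, and there can be $x\in\mathbb{X}\setminus M$ with $0<d(x,M)<r$, for which $P(\mathbb{B}_{\mathbb{R}^d}(x,r))>0$ yet your dichotomy offers no bound. The fix is immediate --- if $M\cap\mathbb{B}_{\mathbb{R}^d}(x,r)\neq\emptyset$, pick $y\in M$ with $\|x-y\|<r$, note $M\cap\mathbb{B}_{\mathbb{R}^d}(x,r)\subset M\cap\mathbb{B}_{\mathbb{R}^d}(y,2r)$, and apply your volume estimate at $y\in M$ with radius $2r$ --- but it should be said. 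The paper's tube-based estimate, by contrast, applies to $M\cap\mathbb{B}_{\mathbb{R}^d}(x,r)$ for arbitrary $x\in\mathbb{R}^d$ (since Lemma~\ref{lem:dimension_volume_bound_upper} only needs $U\subset M$), so it avoids this reduction altogether.
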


See Section \ref{sec:comparison} for a comparison of the volume dimension with the Hausdorff dimension and other notions of the dimension.

Even though, as we will soon show, our bounds for KDEs hold without any assumptions
on the probability distribution and lead to convergence rates arbitrary
close to the optimal minimax rates, in order to actually achieve such exact optimal rate,
we require weak additional conditions on the probability distributions. Note that,
from the definition of the volume dimension, the ratio 
$\frac{P(\mathbb{B}_{\mathbb{R}^{d}}(x,r))}{r^{\nu}}$
is uniformly bounded for $\nu$ smaller than the volume dimension.

\begin{lemma}

\label{lem:dimension_volume_lower}

Let $P$ be a probability distribution on $\mathbb{R}^{d}$, and $d_{{\rm vol}}$
be its volume dimension. Then for any $\nu\in [0,d_{{\rm vol}})$, there
exists a constant $C_{\nu,P}$ depending only on $P$ and $\nu$ such
that for all $x\in\mathbb{X}$ and $r>0$, 
\begin{equation}
\frac{P(\mathbb{B}_{\mathbb{R}^{d}}(x,r))}{r^{\nu}}\leq C_{\nu,P}.\label{eq:dimension_volume_lower}
\end{equation}

\end{lemma}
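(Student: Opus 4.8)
The plan is to split the supremum over all radii $r>0$ into two regimes: small $r$, where the definition of the volume dimension applies directly, and large $r$, where the trivial bound $P(\mathbb{B}_{\mathbb{R}^{d}}(x,r))\le 1$ already suffices. The case $\nu=0$ is immediate since then the ratio is just $P(\mathbb{B}_{\mathbb{R}^{d}}(x,r))\le 1$, so we may assume $\nu>0$.

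First I would show that $L_{\nu}:=\limsup_{r\to 0}\sup_{x\in\mathbb{X}}P(\mathbb{B}_{\mathbb{R}^{d}}(x,r))/r^{\nu}$ is finite for every $\nu\in[0,d_{\mathrm{vol}})$. Since $\nu<d_{\mathrm{vol}}$ is the supremum of the set in \eqref{eq:kde_condition_prob_upper}, there is some $\nu'$ with $\nu<\nu'\le d_{\mathrm{vol}}$ belonging to that set, i.e. $L_{\nu'}<\infty$. For $r\in(0,1)$ one has $P(\mathbb{B}_{\mathbb{R}^{d}}(x,r))/r^{\nu}=r^{\nu'-\nu}\,P(\mathbb{B}_{\mathbb{R}^{d}}(x,r))/r^{\nu'}\le P(\mathbb{B}_{\mathbb{R}^{d}}(x,r))/r^{\nu'}$ because $\nu'-\nu>0$ and $r<1$; taking $\sup_{x}$ and then $\limsup_{r\to 0}$ gives $L_{\nu}\le L_{\nu'}<\infty$. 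By the definition of $\limsup$, there then exists $r_{0}\in(0,1)$ such that $\sup_{x\in\mathbb{X}}P(\mathbb{B}_{\mathbb{R}^{d}}(x,r))/r^{\nu}\le L_{\nu}+1$ for all $r\in(0,r_{0}]$, which handles the small-radius regime uniformly in $x$.

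For $r>r_{0}$ I would simply use $P(\mathbb{B}_{\mathbb{R}^{d}}(x,r))\le 1$ together with $r^{\nu}>r_{0}^{\nu}$ to conclude $P(\mathbb{B}_{\mathbb{R}^{d}}(x,r))/r^{\nu}<r_{0}^{-\nu}$. Combining the two regimes, the constant $C_{\nu,P}:=\max\{L_{\nu}+1,\;r_{0}^{-\nu}\}$, which depends only on $P$ and $\nu$ (through $L_{\nu}$ and the threshold $r_{0}$ extracted from the $\limsup$), satisfies \eqref{eq:dimension_volume_lower} for all $x\in\mathbb{X}$ and all $r>0$.

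There is no serious obstacle here; the only point requiring a little care is that the definition of $d_{\mathrm{vol}}$ controls the ratio only in the limit $r\to 0$, so the large-$r$ behaviour must be bounded separately, where uniform boundedness follows trivially from $P$ being a probability measure. One should also note that the threshold $r_{0}$ and the bound $L_{\nu}+1$ are uniform over $x\in\mathbb{X}$ precisely because the $\limsup$ in \eqref{eq:kde_condition_prob_upper} is taken after the supremum over $x\in\mathbb{X}$.
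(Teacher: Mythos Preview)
Your proof is correct and follows essentially the same approach as the paper: split into small $r$ (controlled via the $\limsup$ in the definition of $d_{\mathrm{vol}}$) and large $r$ (controlled by $P\le 1$), then take the maximum of the two bounds. The only difference is that you spell out why $\nu<d_{\mathrm{vol}}$ forces $L_{\nu}<\infty$ via an intermediate $\nu'$, whereas the paper treats this monotonicity as immediate from the definition.
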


For the exact optimal rate, we impose conditions on how the probability volume decay in \eqref{eq:dimension_volume_lower}
behaves with respect to the volume dimension.

\begin{assumption}

\label{ass:dimension_exact}

Let $P$ be a probability distribution $P$ on $\mathbb{R}^{d}$, and $d_{{\rm vol}}$
be its volume dimension. We assume that 
\begin{equation}
\limsup_{r\to 0}\sup_{x\in\mathbb{X}}\frac{P(\mathbb{B}_{\mathbb{R}^{d}}(x,r))}{r^{d_{{\rm vol}}}}<\infty.
\label{eq:dimension_exact}
\end{equation}
 
\end{assumption}

\begin{assumption}

\label{ass:dimension_exact_lower}

Let $P$ be a probability distribution on $\mathbb{R}^{d}$, and $d_{{\rm vol}}$
be its volume dimension. We assume that 
\begin{equation}
\sup_{x\in\mathbb{X}}\liminf_{r\to0}\frac{P(\mathbb{B}_{\mathbb{R}^{d}}(x,r))}{r^{d_{{\rm vol}}}}>0.
\label{eq:dimension_exact_lower}
\end{equation}

\end{assumption}

These assumptions are in fact weak and hold for common probability distributions. For
example, if a probability distribution is supported on a manifold, Assumption~\ref{ass:dimension_exact} and~\ref{ass:dimension_exact_lower}
hold under the same condition as in Proposition~\ref{prop:dimension_manifold_bounded_density}.
In particular, Assumption~\ref{ass:dimension_exact} and~\ref{ass:dimension_exact_lower} hold when the probability distribution has a bounded density with respect to the $d$-dimensional
Lebesgue measure.

\begin{proposition}

\label{prop:dimension_exact_manifold_bounded_density}

Under the same condition as in Proposition~\ref{prop:dimension_manifold_bounded_density}, Assumption
\ref{ass:dimension_exact} and~\ref{ass:dimension_exact_lower} hold.

\end{proposition}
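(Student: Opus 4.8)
The plan is to deduce both assumptions from the identity $d_{\rm vol} = d_M$ (Proposition~\ref{prop:dimension_manifold_bounded_density}) together with the standard two-sided estimate for the $d_M$-dimensional Hausdorff volume of small ambient balls centered on a manifold of positive reach. Writing $\tau$ for the reach of $M$ and $b := \|p\|_\infty < \infty$, I would first record that there are constants $0 < c_1 \le c_2 < \infty$ depending only on $d_M$, and a radius $r_0 \in (0,\tau)$ depending on $d_M$ and $\tau$, with
\begin{equation}
c_1 r^{d_M} \le \lambda_{d_M}\big(M \cap \mathbb{B}_{\mathbb{R}^d}(y,r)\big) \le c_2 r^{d_M}
\qquad \text{for all } y \in M,\ 0 < r \le r_0 .
\label{eq:prop_exact_vol}
\end{equation}
Both bounds come from representing $M$ near $y$ as a graph over the tangent space $T_y M$ whose gradient is controlled by $r/\tau$ (the lower bound because this graph covers a Euclidean ball of radius $\gtrsim r$ in $T_y M$, the upper bound because Euclidean and geodesic distances are comparable on a positive-reach manifold); this is recalled in Appendix~\ref{sec:definition}. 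Since $\mathrm{supp}(P) \subset M$ and $dP = p\, d\lambda_{d_M}$ on $M$, every Borel set $B$ satisfies $P(B) = \int_{B \cap M} p\, d\lambda_{d_M} \le b\, \lambda_{d_M}(B \cap M)$.

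For Assumption~\ref{ass:dimension_exact}, I would fix $0 < r \le r_0/2$ and $x \in \mathbb{X}$ and split on $d(x,M)$. If $d(x,M) \ge r$ then $\mathbb{B}_{\mathbb{R}^d}(x,r) \cap M = \emptyset$ and $P(\mathbb{B}_{\mathbb{R}^d}(x,r)) = 0$. If $d(x,M) < r < \tau$, the metric projection $y := \pi_M(x)$ is well defined, $\|x-y\| = d(x,M) < r$, hence $\mathbb{B}_{\mathbb{R}^d}(x,r) \subset \mathbb{B}_{\mathbb{R}^d}(y,2r)$, and \eqref{eq:prop_exact_vol} gives $P(\mathbb{B}_{\mathbb{R}^d}(x,r)) \le b\, \lambda_{d_M}(M \cap \mathbb{B}_{\mathbb{R}^d}(y,2r)) \le b\, c_2\, 2^{d_M} r^{d_M}$. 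Taking $\sup_{x \in \mathbb{X}}$ and then $\limsup_{r \to 0}$ yields \eqref{eq:dimension_exact} with $d_{\rm vol} = d_M$.

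For Assumption~\ref{ass:dimension_exact_lower}, since $\int_{M \cap \mathbb{X}} p\, d\lambda_{d_M} = P(M \cap \mathbb{X}) > 0$, the set $A := \{x \in M \cap \mathbb{X}: p(x) > 0\}$ has positive $\lambda_{d_M}$-measure. As \eqref{eq:prop_exact_vol} shows $\lambda_{d_M}|_M$ is Ahlfors $d_M$-regular at small scales, hence locally doubling, the Lebesgue differentiation theorem applies and $\lambda_{d_M}$-a.e. point of $A$ is a Lebesgue point of $p$; I would pick such a point $x_0 \in A$, so that $p(x_0) > 0$ and, for all small $r$, $P(\mathbb{B}_{\mathbb{R}^d}(x_0,r)) = \int_{M \cap \mathbb{B}_{\mathbb{R}^d}(x_0,r)} p\, d\lambda_{d_M} \ge \tfrac{1}{2} p(x_0)\, \lambda_{d_M}(M \cap \mathbb{B}_{\mathbb{R}^d}(x_0,r)) \ge \tfrac{1}{2} c_1 p(x_0) r^{d_M}$. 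Thus $\liminf_{r\to0} P(\mathbb{B}_{\mathbb{R}^d}(x_0,r))/r^{d_M} \ge \tfrac12 c_1 p(x_0) > 0$, and a fortiori the supremum over $x \in \mathbb{X}$ is positive, which is \eqref{eq:dimension_exact_lower}.

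The probabilistic bookkeeping here is routine; the main obstacle — really the only nontrivial ingredient — is the two-sided volume estimate \eqref{eq:prop_exact_vol}, especially its lower half, which must guarantee that every small ambient ball centered on $M$ contains a genuinely $d_M$-dimensional patch, together with the doubling property needed to invoke Lebesgue differentiation. Both are standard consequences of positive reach, and in fact the proof of Proposition~\ref{prop:dimension_manifold_bounded_density} already supplies essentially the same estimates (to get $d_{\rm vol} = d_M$); the present proposition just needs the remark that those bounds already hold at the endpoint exponent $\nu = d_M$.
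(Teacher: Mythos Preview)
Your proof is correct and follows the paper's strategy: both rely on the two-sided Hausdorff-volume estimate for ambient balls on a positive-reach manifold (the paper's Lemma~\ref{lem:dimension_ball_volume_bound}) and then locate a point $x_0 \in M \cap \mathbb{X}$ of positive density. The only differences are minor technical choices---for the upper bound you explicitly handle $x \in \mathbb{X} \setminus M$ by projecting onto $M$ (which the paper leaves implicit), and for the lower bound the paper invokes the density theorem for rectifiable measures to get $\lim_{r\to 0} P(\mathbb{B}_{\mathbb{R}^d}(x_0,r))/r^{d_M} > 0$ directly, whereas you reach the same conclusion via Lebesgue differentiation on the locally doubling measure $\lambda_{d_M}|_M$.
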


Also, the Assumption~\ref{ass:dimension_exact} and~\ref{ass:dimension_exact_lower} is closed under the convex combination. In other words, a mixture of probability distributions satisfy Assumption~\ref{ass:dimension_exact} and~\ref{ass:dimension_exact_lower} if all its component satisfy those assumptions.

\begin{proposition}
    \label{prop:dimension_assumption_convex}
    The set of probability distributions satisfying Assumption~\ref{ass:dimension_exact} is convex. And so is the set of probability distributions satisfying Assumption~\ref{ass:dimension_exact_lower}.
    
\end{proposition}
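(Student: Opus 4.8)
The plan is to derive both statements directly from Proposition~\ref{prop:mixture}, which identifies $d_{\mathrm{vol}}\!\left(\sum_{i}\lambda_{i}P_{i}\right)=\min_{i}d_{\mathrm{vol}}(P_{i})$, together with the trivial additivity $\left(\sum_{i}\lambda_{i}P_{i}\right)(\mathbb{B}_{\mathbb{R}^{d}}(x,r))=\sum_{i}\lambda_{i}P_{i}(\mathbb{B}_{\mathbb{R}^{d}}(x,r))$ and the elementary monotonicity $r^{b}\le r^{a}$ for $0<r<1$ and $a\le b$. Convexity of a set means it is closed under binary convex combinations, but it is no harder to argue for a general finite mixture $P=\sum_{i=1}^{m}\lambda_{i}P_{i}$ with $\lambda_{i}\in(0,1)$ and $\sum_{i}\lambda_{i}=1$; throughout set $d_{0}:=d_{\mathrm{vol}}(P)=\min_{i}d_{\mathrm{vol}}(P_{i})$.

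\emph{Assumption~\ref{ass:dimension_exact}.} Assume each $P_{i}$ satisfies \eqref{eq:dimension_exact}. For every $x\in\mathbb{X}$ and $0<r<1$, using $d_{0}\le d_{\mathrm{vol}}(P_{i})$,
\[
\frac{P(\mathbb{B}_{\mathbb{R}^{d}}(x,r))}{r^{d_{0}}}=\sum_{i=1}^{m}\lambda_{i}\frac{P_{i}(\mathbb{B}_{\mathbb{R}^{d}}(x,r))}{r^{d_{0}}}\le\sum_{i=1}^{m}\lambda_{i}\frac{P_{i}(\mathbb{B}_{\mathbb{R}^{d}}(x,r))}{r^{d_{\mathrm{vol}}(P_{i})}}.
\]
Taking $\sup_{x\in\mathbb{X}}$ (subadditive over a finite sum) and then $\limsup_{r\to0}$ (again subadditive), the right-hand side is bounded by $\sum_{i}\lambda_{i}\limsup_{r\to0}\sup_{x\in\mathbb{X}}\frac{P_{i}(\mathbb{B}_{\mathbb{R}^{d}}(x,r))}{r^{d_{\mathrm{vol}}(P_{i})}}<\infty$ by \eqref{eq:dimension_exact} for each $P_{i}$. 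Hence $P$ satisfies Assumption~\ref{ass:dimension_exact}.

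\emph{Assumption~\ref{ass:dimension_exact_lower}.} Here the subadditivity runs the wrong way, so instead I would discard all but one component. Assume each $P_{i}$ satisfies \eqref{eq:dimension_exact_lower}, and fix an index $i^{\ast}$ attaining the minimum, $d_{\mathrm{vol}}(P_{i^{\ast}})=d_{0}$. Since every summand is nonnegative, for all $x\in\mathbb{X}$ and $r>0$,
\[
\frac{P(\mathbb{B}_{\mathbb{R}^{d}}(x,r))}{r^{d_{0}}}\ge\lambda_{i^{\ast}}\,\frac{P_{i^{\ast}}(\mathbb{B}_{\mathbb{R}^{d}}(x,r))}{r^{d_{\mathrm{vol}}(P_{i^{\ast}})}},
\]
where the exponents agree precisely because $d_{\mathrm{vol}}(P_{i^{\ast}})=d_{0}$. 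Applying $\liminf_{r\to0}$ and then $\sup_{x\in\mathbb{X}}$ to both sides and using $\lambda_{i^{\ast}}>0$,
\[
\sup_{x\in\mathbb{X}}\liminf_{r\to0}\frac{P(\mathbb{B}_{\mathbb{R}^{d}}(x,r))}{r^{d_{0}}}\ge\lambda_{i^{\ast}}\,\sup_{x\in\mathbb{X}}\liminf_{r\to0}\frac{P_{i^{\ast}}(\mathbb{B}_{\mathbb{R}^{d}}(x,r))}{r^{d_{\mathrm{vol}}(P_{i^{\ast}})}}>0,
\]
the last inequality being \eqref{eq:dimension_exact_lower} for $P_{i^{\ast}}$. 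Thus $P$ satisfies Assumption~\ref{ass:dimension_exact_lower}.

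\emph{Main obstacle.} There is no genuine difficulty; the content lies entirely in aligning the exponent $d_{0}$ with the exponents $d_{\mathrm{vol}}(P_{i})$ furnished by Proposition~\ref{prop:mixture}, and in being careful with the order of $\sup_{x}$, $\limsup_{r}$ / $\liminf_{r}$, and the finite sum. The one conceptual point is the asymmetry between the two assumptions: the upper bound is ``subadditive-friendly'' and every component is harmless once its ratio is compared against $r^{d_{0}}$, whereas for the lower bound one must retain a component whose volume dimension actually equals the minimum $d_{0}$ — a component with strictly larger volume dimension has $P_{i}(\mathbb{B}_{\mathbb{R}^{d}}(x,r))/r^{d_{0}}\to0$ and contributes nothing to the $\liminf$. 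One should also remember that $r^{d_{0}}\ge r^{d_{\mathrm{vol}}(P_{i})}$ needs $r<1$, which is immaterial since $r\to0$.
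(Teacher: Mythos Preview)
Your proposal is correct and follows essentially the same approach as the paper: both invoke Proposition~\ref{prop:mixture} to identify $d_{\mathrm{vol}}(P)=\min_i d_{\mathrm{vol}}(P_i)$, then use $r^{d_0}\ge r^{d_{\mathrm{vol}}(P_i)}$ for $r<1$ and subadditivity of $\sup$/$\limsup$ for Assumption~\ref{ass:dimension_exact}, and drop all but a minimum-attaining component for Assumption~\ref{ass:dimension_exact_lower}. The only cosmetic difference is that the paper reduces to $m=2$ while you treat general finite $m$ directly.
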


We end this section with an example of an unbounded density. In this case, the volume dimension is strictly smaller than the dimension of the support which illustrates why the dimension of the support is not enough to characterize the dimensionality of a distribution.

\begin{example}
\label{ex:density_unbounded}

Let $P$ be a distribution on $\mathbb{R}^{d}$ having a density $p$
with respect to the $d$-dimensional Lebesgue measure. Fix $\beta<d$,
and suppose $p:\mathbb{R}^{d}\to\mathbb{R}$ is defined as 
\[
p(x)=\frac{(d-\beta)\Gamma\left(\frac{d}{2}\right)}{2\pi^{\frac{d}{2}}}\left\Vert x\right\Vert^{-\beta}I(\left\Vert x\right\Vert\leq 1).
\]
Then, for each fixed $r\in[0,1]$, 
\begin{align*}
\sup_{x\in\mathbb{R}^{d}}P(\mathbb{B}_{\mathbb{R}^{d}}(x,r)) & =P(\mathbb{B}_{\mathbb{R}^{d}}(0,r))=r^{d-\beta}.
\end{align*}
Hence from Definition~\ref{def:dimension_volume}, 
the volume dimension is 
\[
d_{{\rm vol}}(P)=d-\beta,
\]
 and from \eqref{eq:dimension_exact} and \eqref{eq:dimension_exact_lower}, Assumption~\ref{ass:dimension_exact} and~\ref{ass:dimension_exact_lower} are satisfied.


\end{example}

\section{Uniform convergence of the Kernel Density Estimator}
\label{sec:kde}

To derive a bound on the performance of a kernel density estimator that is valid uniformly in $h$ and $x \in \mathbb{X}$,
we first rewrite 
$$
\sup_{h\geq l_{n},x\in\mathbb{X}}\left|\hat{p}_{h}(x)-p_{h}(x)\right|
$$
as a supremum over a function class. Formally, for $x\in\mathbb{X}$ and $h\geq l_{n} > 0$,
let $K_{x,h}(\cdot) :=K\left(\frac{x-\cdot}{h}\right)$ and  consider the following class of 
normalized kernel functions  centered around each point in $\mathbb{X}$
and with bandwidth greater than or equal to $l_{n} > 0$:
$$
\tilde{\mathcal{F}}{}_{K,[l_{n},\infty)}:=\left\{ (1/h^{d})K_{x,h}:\,x\in\mathbb{X},\,h\geq l_{n}\right\}.
$$
Then $\sup_{h\geq l_{n},x\in\mathbb{X}}\left|\hat{p}_{h}(x)-p_{h}(x)\right|$
can be rewritten as a supremum of an empirical process indexed by $\tilde{\mathcal{F}}$, that is,
\begin{equation}
\sup_{h\geq l_{n},x\in\mathbb{X}}\left|\hat{p}_{h}(x)-p_{h}(x)\right|
=\sup_{f\in\tilde{\mathcal{F}}{}_{K,[l_{n},\infty)}}\left|\frac{1}{n}\sum_{i=1}^{n}f(X_{i})-\mathbb{E}[f(X)]\right|.\label{eq:kde_supremum_function_class}
\end{equation}

We combine  Talagrand's inequality and a VC type bound to bound \eqref{eq:kde_supremum_function_class}, following the approach of \citet[Theorem 3.1]{Sriperumbudur2012}.
The following version of Talagrand's inequality is from \citet[Theorem 2.3]{Bousquet2002}
and simplified in \citet[Theorem 7.5]{SteinwartC2008}.

\begin{proposition}{\citep[Theorem 2.3]{Bousquet2002}, \citep[Theorem 7.5, Theorem A.9.1]{SteinwartC2008}}
	
	\label{prop:function_talagrand}
	
	Let $(\mathbb{R}^{d},P)$ be a probability space and let $X_{1},\ldots,X_{n}$
	be i.i.d. from $P$. Let $\mathcal{F}$ be a class of functions from
	$\mathbb{R}^{d}$ to $\mathbb{R}$ that is separable in $L_{\infty}(\mathbb{R}^{d})$.
	Suppose all functions $f\in\mathcal{F}$ are $P$-measurable, and
	there exists $B,\sigma>0$ such that 
	$\mathbb{E}_{P}f=0$, $\mathbb{E}_{P}f^{2}\leq\sigma^{2}$, and $\left\Vert f\right\Vert _{\infty}\leq B$,
	for all $f\in\mathcal{F}$. Let 
	\[
	Z:=\sup_{f\in\mathcal{F}}\left|\frac{1}{n}\sum_{i=1}^{n}f(X_{i})\right|,
	\]
	Then for any $\delta>0$, 
	\[
	P\left(Z\geq \mathbb{E}_{P}[Z]+\sqrt{\left(\frac{2}{n}\log\frac{1}{\delta}\right)\left(\sigma^{2}+2B\mathbb{E}_{P}[Z]\right)}
	+\frac{2B\log\frac{1}{\delta}}{3n}\right)\leq\delta.
	\]
	
\end{proposition}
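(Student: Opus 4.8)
The plan is to obtain this statement from the general theory of concentration of suprema of empirical processes: the bound is precisely Bousquet's sharpening of Talagrand's inequality, so one acceptable route is simply to invoke \citet[Theorem 2.3]{Bousquet2002} together with the repackaging in \citet[Theorem 7.5]{SteinwartC2008}. For a more self-contained argument I would first dispose of the two-sidedness and the measurability technicalities. To pass from the one-sided to the two-sided supremum, replace $\mathcal{F}$ by the symmetrized class $\mathcal{F}\cup(-\mathcal{F})$; this class is still separable in $L_{\infty}(\mathbb{R}^{d})$, every member still satisfies $\mathbb{E}_{P}f=0$, $\mathbb{E}_{P}f^{2}\leq\sigma^{2}$, $\|f\|_{\infty}\leq B$, and its one-sided supremum equals the two-sided supremum of the original class. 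Separability then lets one replace the supremum by one over a countable dense subfamily, so that $Z$ is a genuine random variable and all the exchanges of suprema, limits and expectations below are justified.

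Second, I would run the entropy method. Writing $S:=\sup_{f}\sum_{i=1}^{n}f(X_{i})$ (so that $Z=S/n$ after the symmetrization above) and $\psi(\lambda):=\log\mathbb{E}\,e^{\lambda(S-\mathbb{E}S)}$, apply the tensorization inequality for entropy, $\mathrm{Ent}(e^{\lambda S})\leq\sum_{i=1}^{n}\mathbb{E}[\mathrm{Ent}_{i}(e^{\lambda S})]$, and bound each conditional entropy by freezing all coordinates except the $i$-th and using that changing $X_{i}$ alters $S$ by a quantity controlled by a single centered function with variance $\leq\sigma^{2}$ and sup-norm $\leq B$. Combined with the elementary bound $\phi(x):=e^{x}-x-1\leq x^{2}/(2(1-x/3))$ for $x<3$, this yields a sub-gamma differential inequality for $\psi$, integrating which from $0$ gives $\psi(\lambda)\leq v\lambda^{2}/(2(1-B\lambda/3))$ for $0<\lambda<3/B$, with $v:=n\sigma^{2}+2B\,\mathbb{E}S$.

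Third, convert this into a tail bound by the standard Cramér--Chernoff computation for sub-gamma variables, obtaining $P(S\geq\mathbb{E}S+\sqrt{2vt}+Bt/3)\leq e^{-t}$ for all $t>0$. Setting $t=\log(1/\delta)$, dividing by $n$, and substituting $\mathbb{E}S=n\,\mathbb{E}Z$ so that $v/n^{2}=\sigma^{2}/n+2B\,\mathbb{E}Z/n$, one recovers the displayed inequality; the coefficient $2$ in the residual term $2B\log(1/\delta)/(3n)$ is absorbed by bounding the effective range of a centered function bounded by $B$ by $2B$.

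The step I expect to be the real work is the second one: extracting the sharp sub-gamma inequality for $\psi$ with the correct variance proxy $v=n\sigma^{2}+2B\,\mathbb{E}S$ and the $1/3$ factor. This is exactly where Bousquet's refinement over earlier Talagrand-type bounds lies, and it requires the modified logarithmic Sobolev inequality, equivalently a careful use of the variational characterization of entropy applied to the relevant increments, rather than a crude net- or union-based estimate. By contrast, the symmetrization and separability reductions are routine bookkeeping and the final Chernoff inversion is a standard computation that I would not carry out in detail.
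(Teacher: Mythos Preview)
The paper does not prove this proposition at all: it is stated as a direct quotation of \citet[Theorem 2.3]{Bousquet2002} and its reformulation in \citet[Theorem 7.5, Theorem A.9.1]{SteinwartC2008}, and is used as a black box in the proof of Theorem~\ref{thm:function_uniform}. So there is no ``paper's own proof'' to compare against; the intended ``proof'' is simply the citation.

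Your proposal goes well beyond this and sketches the entropy-method derivation of Bousquet's bound itself. The outline is correct in its essentials: the symmetrization $\mathcal{F}\mapsto\mathcal{F}\cup(-\mathcal{F})$ to reduce to a one-sided supremum, the separability reduction to a countable index set, tensorization of entropy together with a modified log-Sobolev inequality to obtain the sub-gamma bound $\psi(\lambda)\leq v\lambda^{2}/(2(1-B\lambda/3))$ with the sharp variance proxy $v=n\sigma^{2}+2B\,\mathbb{E}S$, and the Cram\'er--Chernoff inversion. You are also right that the substantive work lies in extracting the exact form of $v$; that is precisely Bousquet's contribution over earlier Talagrand-type constants. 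The only place your narrative is a bit loose is the bookkeeping for the factor $2$ in the linear term $2B\log(1/\delta)/(3n)$: in Bousquet's original the scale parameter comes from the one-sided uniform bound on the functions, and the $2B$ here reflects that after centering (or after passing to the two-sided class) the effective sup-norm bound doubles; this is fine, but worth stating once rather than as an afterthought. None of this is needed for the paper's purposes, where a citation suffices.
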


By applying Talagrand's inequality to \eqref{eq:kde_supremum_function_class}, 
$\sup_{h\geq l_{n},x\in\mathbb{X}}\left|\hat{p}_{h}(x)-p_{h}(x)\right|$
can be upper bounded in
terms of $n$, $\left\Vert K_{x,h}\right\Vert _{\infty}$, $\mathbb{E}_{P}[K_{x,h}^{2}]$,
and 
\begin{equation}
\mathbb{E}_{P}\left[\sup_{f\in \tilde{\mathcal{F}}{}_{K,[l_{n},\infty)}}\left|\frac{1}{n}\sum_{i=1}^{n}f(X_{i})-\mathbb{E}[f(X)]\right|\right].\label{eq:kde_supremum_expectation}
\end{equation}
To bound the last term, we use the uniformly bounded VC class assumption on the kernel. The following bound on the expected suprema
of empirical processes of VC classes of
functions is from \citet[Proposition 2.1]{GineG2001}.

\begin{proposition} (\citet[Proposition 2.1]{GineG2001}, \citep[Theorem A.2]{Sriperumbudur2012})
	
	\label{prop:function_vc_bound}
	
	Let $(\mathbb{R}^{d},P)$ be a probability space and let $X_{1},\ldots,X_{n}$
	be i.i.d. from $P$. Let $\mathcal{F}$ be a class of functions from
	$\mathbb{R}^{d}$ to $\mathbb{R}$ that is uniformly bounded VC-class
	with dimension $\nu$, i.e. there exists positive numbers $A$,$B$
	such that, for all $f\in\mathcal{F}$, $\left\Vert f\right\Vert _{\infty}\leq B$, and the covering number $\mathcal{N}(\mathcal{F},L_{2}(Q),\epsilon)$
	satisfies 
	\[
	\mathcal{N}(\mathcal{F},L_{2}(Q),\epsilon)\leq\left(\frac{AB}{\epsilon}\right)^{\nu}.
	\]
	for every probability measure $Q$ on $\mathbb{R}^{d}$ and for
	every $\epsilon\in(0,B)$. Let $\sigma>0$ be a positive number such that $\mathbb{E}_{P}f^{2}\leq\sigma^{2}$ for all
	$f\in\mathcal{F}$. Then there exists a universal constant $C$ not
	depending on any parameters such that 
	\[
	\mathbb{E}_{P}\left[\sup_{f\in\mathcal{F}}\left|\frac{1}{n}\sum_{i=1}^{n}f(X_{i})\right|\right]
	\leq C\left(\frac{\nu B}{n}\log \left(\frac{AB}{\sigma}\right)+\sqrt{\frac{\nu\sigma^{2}}{n}\log\left(\frac{AB}{\sigma}\right)}\right).   
	\]
	
\end{proposition}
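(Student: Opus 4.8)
The statement is the bound of Gin\'e and Guillou on the mean suprema of a uniformly bounded VC-type empirical process, and the plan is to recover it from three standard devices: Rademacher symmetrization, Dudley's entropy integral for sub-Gaussian chaining, and a self-referential (``squaring'') bound that replaces the random $L_2(P_n)$-radius of the class by the deterministic $L_2(P)$-radius $\sigma$. (The right side vanishes as $n\to\infty$, so the claim is understood for mean-zero classes --- equivalently, applied after centering --- and I argue in that case.) First I would symmetrize: with i.i.d.\ Rademacher variables $\varepsilon_1,\dots,\varepsilon_n$ independent of the data, the standard symmetrization inequality gives
\[
\mathbb{E}_P\left[\sup_{f\in\mathcal{F}}\left|\frac{1}{n}\sum_{i=1}^n f(X_i)\right|\right]\;\le\;\frac{2}{n}\,\mathbb{E}\left[\sup_{f\in\mathcal{F}}\left|\sum_{i=1}^n\varepsilon_i f(X_i)\right|\right],
\]
so it suffices to control the Rademacher process, which, conditionally on $X_1,\dots,X_n$, has sub-Gaussian increments for the pseudometric $d_n(f,g)=\sqrt{n}\,\|f-g\|_{L_2(P_n)}$, where $P_n$ is the empirical measure.

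Second, I would apply Dudley's chaining bound to the conditional process and insert the hypothesized uniform entropy bound with $Q=P_n$:
\[
\mathbb{E}_\varepsilon\left[\sup_{f\in\mathcal{F}}\left|\frac{1}{n}\sum_{i=1}^n\varepsilon_i f(X_i)\right|\right]\;\lesssim\;\frac{1}{\sqrt{n}}\int_0^{2D_n}\sqrt{\log\mathcal{N}(\mathcal{F},L_2(P_n),\tau)}\,d\tau\;\le\;\frac{\sqrt{\nu}}{\sqrt{n}}\int_0^{2D_n}\sqrt{\log(AB/\tau)}\,d\tau,
\]
where $D_n:=\sup_{f\in\mathcal{F}}\|f\|_{L_2(P_n)}$ is the random radius. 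Evaluating the convergent integral gives a bound of order $\sqrt{\nu/n}\,D_n\sqrt{\log(AB/D_n)}$; taking expectation over the data, it remains only to bound $\mathbb{E}\bigl[D_n\sqrt{\log(AB/D_n)}\bigr]$, which by Jensen's inequality (the map $t\mapsto t\sqrt{\log(AB/t)}$ is concave and, after harmlessly enlarging $A$, increasing on $(0,B]$) is at most $\sqrt{\mathbb{E}[D_n^2]}\,\sqrt{\log\bigl(AB/\sqrt{\mathbb{E}[D_n^2]}\bigr)}$.

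Third --- the step I expect to be the main obstacle --- I must show $\mathbb{E}[D_n^2]=\mathbb{E}\bigl[\sup_{f\in\mathcal{F}}P_n f^2\bigr]\lesssim\sigma^2+\tfrac{\nu B^2}{n}\log(AB/\sigma)$. Write $D_n^2\le\sigma^2+\sup_{f\in\mathcal{F}}\bigl|(P_n-P)f^2\bigr|$ and note that the squared class $\mathcal{F}^2=\{f^2:f\in\mathcal{F}\}$ is again uniformly bounded VC-type: envelope $B^2$, radius $\sup_f\|f^2\|_{L_2(P)}\le B\sigma$ since $Pf^4\le B^2Pf^2\le B^2\sigma^2$, and covering number at most $(2AB^2/\tau)^\nu$ because $|f^2-g^2|\le 2B|f-g|$. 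Applying the symmetrization-and-chaining argument of Steps~1--2 to the centered process $(P_n-P)f^2$ bounds $\mathbb{E}\bigl[\sup_{f\in\mathcal{F}}|(P_n-P)f^2|\bigr]$ in terms of its own random radius $D_n'$, and $\sup_f P_n f^4\le B^2D_n^2$ forces $\mathbb{E}[(D_n')^2]\le B^2\,\mathbb{E}[D_n^2]$; consequently the estimate for $\mathbb{E}[D_n^2]$ closes as a quadratic inequality in $\sqrt{\mathbb{E}[D_n^2]}$ --- this is precisely the recursion of Gin\'e and Guillou --- whose solution is $\mathbb{E}[D_n^2]\lesssim\sigma^2+\tfrac{\nu B^2}{n}\log(AB/\sigma)=:V$. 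Plugging $V$ back into Step~2 and using $\sqrt{a+b}\le\sqrt{a}+\sqrt{b}$ splits $\sqrt{\nu/n}\,\sqrt{V\log(AB/\sqrt{V})}$ into the main term $\sqrt{\nu\sigma^2\log(AB/\sigma)/n}$ and, from the $\tfrac{\nu B^2}{n}\log$ part of $V$, the lower-order term $\tfrac{\nu B}{n}\log(AB/\sigma)$, which is exactly the asserted bound. The delicate points are keeping every constant universal through the recursion, justifying the truncation of the entropy integral at the random radius $D_n$ (which uses the sub-Gaussian increments together with the $L_\infty$-separability of $\mathcal{F}$), and the elementary but fussy bookkeeping that reduces all the logarithmic factors to $\log(AB/\sigma)$.
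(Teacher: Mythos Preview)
The paper does not prove this proposition: it is quoted verbatim from Gin\'e--Guillou (2001, Proposition~2.1) and Sriperumbudur (2012, Theorem~A.2) and then used as a black box in the proof of Theorem~\ref{thm:function_uniform}. So there is no ``paper's own proof'' to compare against.

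That said, your outline is a faithful reconstruction of the original Gin\'e--Guillou argument: Rademacher symmetrization, conditional Dudley chaining with the entropy integral truncated at the random $L_2(P_n)$-radius $D_n$, and the self-referential ``squaring'' recursion on $\mathbb{E}[D_n^2]$ via the class $\mathcal{F}^2$. Your observation that the bound is only meaningful for mean-zero classes is correct and matches how the paper actually uses the proposition (it is applied in the proof of Theorem~\ref{thm:function_uniform} to the centered class $\mathcal{G}=\{f-\mathbb{E}_P f:f\in\mathcal{F}\}$). One technical wrinkle: the step where you pass from $\mathbb{E}\bigl[D_n\sqrt{\log(AB/D_n)}\bigr]$ to $\sqrt{\mathbb{E}[D_n^2]}\,\sqrt{\log\bigl(AB/\sqrt{\mathbb{E}[D_n^2]}\bigr)}$ is not literally Jensen for the map $t\mapsto t\sqrt{\log(AB/t)}$; the clean way is to apply Jensen to the concave map $u\mapsto u\log(c/u)$ with $u=D_n^2$ (or to use Cauchy--Schwarz together with monotonicity of the entropy integral), which yields the same conclusion up to constants. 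Otherwise the sketch is sound.
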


By applying Proposition~\ref{prop:function_talagrand} and Proposition~\ref{prop:function_vc_bound} to $\tilde{\mathcal{F}}{}_{K,[l_{n},\infty)}$, it can be shown that the upper bound of 
\[
\sup_{h\geq l_{n},x\in\mathbb{X}}\left|\hat{p}_{h}(x)-p_{h}(x)\right|
\] 
can be written as a function of $\left\Vert K_{x,h}\right\Vert _{\infty}$ and $\mathbb{E}_{P}[K_{x,h}^{2}]$. When the lower bound on the interval $l_{n}$ is not too small,
the terms relating to $\mathbb{E}_{P}[K_{x,h}^{2}]$ are
more dominant. Hence, to get a good upper bound with respect to both $n$ and $h$, it is important to get a tight upper bound for
$\mathbb{E}_{P}[K_{x,h}^{2}]$. Under the
existence of the Lebesgue density of $P$, it can be shown that 
\[
\mathbb{E}_{P}[K_{x,h}^{2}] \leq \|K\|_2 \|p\|_\infty h^d,
\]
by change of variables. (see, e.g. the proof of Proposition A.5. in \citet{Sriperumbudur2012}.)

For general distributions (such as the ones supported on a lower-dimensional manifold), the change of variables argument is no longer directly applicable. However, under an integrability condition on the kernel, detailed below,  we can provide a bound based on the volume dimension. 

\begin{assumption}
    \label{ass:integrable}
	Let $K:\mathbb{R}^{d}\to\mathbb{R}$ be a kernel function with $\left\Vert K\right\Vert _{\infty}<\infty$, and fix $k>0$. We impose an integrability condition: either $d_{{\rm vol}}=0$ or 
	\begin{equation}
	\int_{0}^{\infty}t^{d_{\mathrm{vol}}-1}\sup_{\left\Vert x\right\Vert \geq t}|K(x)|^{k}dt<\infty.\label{eq:kde_condition_integral_finite}
	\end{equation}
	We set $k=2$ by default unless it is specified in otherwise.
\end{assumption}

\begin{remark}
	It is important to emphasize that Assumption~\ref{ass:integrable} is weak, as it is satisfied by commonly used kernels. For instance, if the kernel function $K(x)$ decays at a polynomial rate strictly faster than $d_{\mathrm{vol}}/k$ (which is at most $d/k$) as $x\to\infty$, that is, if 
	\[
	\limsup_{x\to\infty} \left\Vert x\right\Vert ^{d_{\mathrm{vol}}/k+\epsilon}K(x)<\infty,
	\]
	for any $\epsilon >0$, the integrability condition \eqref{eq:kde_condition_integral_finite} is satisfied. Also, if the kernel function $K(x)$ is spherically symmetric, that is, if there exists
	$\tilde{K}:[0,\infty)\to\mathbb{R}$ with $K(x)=\tilde{K}(\left\Vert x\right\Vert )$,
	then the integrability condition \eqref{eq:kde_condition_integral_finite} is satisfied provided $\left\Vert K\right\Vert _{k} <\infty$. Kernels with bounded support also satisfy the condition \eqref{eq:kde_condition_integral_finite}. Thus, most of the commonly used kernels including Uniform, Epanechnikov, and Gaussian kernels satisfy the above integrability condition.
\end{remark}


By combining Assumption~\ref{ass:integrable} and Lemma~\ref{lem:dimension_volume_lower}, we can bound $\mathbb{E}_{P}[K_{x,h}^{2}]$ in terms of the volume dimension $d_{\mathrm{vol}}$.
\begin{lemma}\label{lem:kde_kernel_bound_two} Let $(\mathbb{R}^{d},P)$
	be a probability space and let $X\sim P$. For any kernel $K$ satisfying Assumption~\ref{ass:integrable} with $k>0$,
	the expectation of the $k$-moment of the kernel is upper bounded as 
	\begin{equation}
	\mathbb{E}_{P}\left[\left|K\left(\frac{x-X}{h}\right)\right|^{k}\right]\leq C_{k,P,K,\epsilon}h^{d_{\mathrm{vol}}-\epsilon},
	\label{eq:kde_kernel_bound}
	\end{equation}
	for any $\epsilon\in (0,d_{{\rm vol}})$, where $C_{k,P,K,\epsilon}$ is a constant depending only on $k$, $P$, $K$, and $\epsilon$. Further, if $d_{{\rm vol}}=0$ or under Assumption~\ref{ass:dimension_exact}, $\epsilon$ can be $0$ in \eqref{eq:kde_kernel_bound}.
	\end{lemma}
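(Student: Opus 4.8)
The plan is to reduce the expectation to an integral of $P$-measures of Euclidean balls against a radial envelope of $K$, and then invoke Lemma~\ref{lem:dimension_volume_lower}. Fix $x\in\mathbb{X}$ and $h>0$, put $\nu:=d_{{\rm vol}}-\epsilon\in(0,d_{{\rm vol}})$, and introduce the non-increasing radial envelope
\[
M(t):=\sup_{\|y\|\geq t}|K(y)|^{k},\qquad t\geq 0,
\]
which is finite since $M(t)\leq\|K\|_{\infty}^{k}<\infty$, and Borel measurable because it is monotone. Since $\|(x-X)/h\|=\|X-x\|/h$, we have $|K((x-X)/h)|^{k}\leq M(\|X-x\|/h)$ pointwise, so it suffices to bound $\mathbb{E}_{P}[M(\|X-x\|/h)]$.

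Next I would apply the layer-cake formula $\mathbb{E}_{P}[M(\|X-x\|/h)]=\int_{0}^{\infty}P\!\left(M(\|X-x\|/h)>s\right)ds$. Because $M$ is non-increasing, for each $s>0$ the super-level set $\{t\geq0:M(t)>s\}$ is an interval containing $0$, say of the form $[0,q(s))$ or $[0,q(s)]$ with $q(s):=\sup\{t\geq0:M(t)>s\}$ (and $q(s)=0$ if this set is empty); hence $M(\|X-x\|/h)>s$ forces $\|X-x\|\leq h\,q(s)$, i.e.\ $X\in\mathbb{B}_{\mathbb{R}^{d}}(x,h\,q(s)+\eta)$ for every $\eta>0$. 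Applying Lemma~\ref{lem:dimension_volume_lower} with exponent $\nu$ and then letting $\eta\downarrow0$ gives $P(M(\|X-x\|/h)>s)\leq C_{\nu,P}\,(h\,q(s))^{\nu}=C_{\nu,P}\,h^{\nu}q(s)^{\nu}$.

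It then remains to integrate in $s$. Writing $q(s)^{\nu}=\nu\int_{0}^{q(s)}t^{\nu-1}dt=\nu\int_{0}^{\infty}t^{\nu-1}I(M(t)>s)\,dt$ and applying Tonelli,
\[
\int_{0}^{\infty}q(s)^{\nu}\,ds=\nu\int_{0}^{\infty}t^{\nu-1}\!\left(\int_{0}^{\infty}I(M(t)>s)\,ds\right)dt=\nu\int_{0}^{\infty}t^{\nu-1}M(t)\,dt,
\]
which is finite: on $(0,1]$ bound $M(t)\leq\|K\|_{\infty}^{k}$ and use $\int_{0}^{1}t^{\nu-1}dt=1/\nu<\infty$ (here $\nu>0$); on $[1,\infty)$ use $t^{\nu-1}\leq t^{d_{{\rm vol}}-1}$ (valid since $\nu\leq d_{{\rm vol}}$ and $t\geq1$) together with the integrability condition \eqref{eq:kde_condition_integral_finite}. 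Collecting terms gives $\mathbb{E}_{P}[|K((x-X)/h)|^{k}]\leq C_{\nu,P}\,\nu\bigl(\int_{0}^{\infty}t^{\nu-1}M(t)\,dt\bigr)h^{\nu}=:C_{k,P,K,\epsilon}\,h^{d_{{\rm vol}}-\epsilon}$, with the constant depending only on $k,P,K,\epsilon$ and, in particular, not on $n$, $h$, or $l_{n}$.

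For the cases where $\epsilon=0$ is allowed: if $d_{{\rm vol}}=0$ the claim is trivial, since $\mathbb{E}_{P}[|K((x-X)/h)|^{k}]\leq\|K\|_{\infty}^{k}=\|K\|_{\infty}^{k}h^{0}$. Under Assumption~\ref{ass:dimension_exact}, \eqref{eq:dimension_exact} yields $r_{0},C_{0}>0$ with $P(\mathbb{B}_{\mathbb{R}^{d}}(x,r))\leq C_{0}r^{d_{{\rm vol}}}$ for all $x\in\mathbb{X}$ and $r\leq r_{0}$, while for $r>r_{0}$ one has $P(\mathbb{B}_{\mathbb{R}^{d}}(x,r))\leq1\leq r_{0}^{-d_{{\rm vol}}}r^{d_{{\rm vol}}}$; so the bound $P(\mathbb{B}_{\mathbb{R}^{d}}(x,r))\leq C' r^{d_{{\rm vol}}}$ holds for all $r>0$, and the argument above goes through verbatim with $\nu=d_{{\rm vol}}$, the only point being that $\int_{0}^{\infty}t^{d_{{\rm vol}}-1}M(t)\,dt<\infty$, which again follows by splitting at $t=1$ and using $M$ bounded on $(0,1]$ (with $d_{{\rm vol}}>0$) and \eqref{eq:kde_condition_integral_finite} on $[1,\infty)$. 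I expect the only real care needed — more bookkeeping than obstacle — to be the passage from the kernel moment to the radial integral: dominating $|K((x-X)/h)|^{k}$ by the monotone envelope $M$ and checking via Tonelli and the split at $t=1$ that $\int_{0}^{\infty}t^{\nu-1}M(t)\,dt$ is finite for $\nu<d_{{\rm vol}}$ (and for $\nu=d_{{\rm vol}}$ under the extra assumption), since Lemma~\ref{lem:dimension_volume_lower} already delivers the ball bound uniformly over all radii $r>0$, which is exactly what makes the $s$-integration converge.
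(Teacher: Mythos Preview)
Your proof is correct and follows the same approach as the paper's: dominate $|K|^{k}$ by its non-increasing radial envelope $M(t)=\sup_{\|y\|\ge t}|K(y)|^{k}$, apply the layer-cake representation, bound the resulting ball probabilities via Lemma~\ref{lem:dimension_volume_lower} (or Assumption~\ref{ass:dimension_exact}), and reduce to the finiteness of $\int_{0}^{\infty}t^{\nu-1}M(t)\,dt$ by splitting at $t=1$. The only difference is that the paper introduces an auxiliary continuous strictly decreasing majorant $\tilde K_{\eta}>M$ in order to perform a change of variables and then integrates by parts (letting $\eta\downarrow0$ at the end), whereas you work directly with $M$ through its quantile $q(s)$ and a Tonelli swap; this is a slightly more elementary bookkeeping that lands on exactly the same constant $C_{\nu,P}\,\nu\int_{0}^{\infty}t^{\nu-1}M(t)\,dt$.
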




\subsection{Uniformity on a ray of bandwidths}
\label{subsec:kde_band_ray}

In this subsection, we demonstrate an $L_\infty$ convergence rate for  kernel
density estimators, that is valid is uniformly on a ray of bandwidths $[l_{n},\infty)$.

To apply the VC type bound from Proposition~\ref{prop:function_vc_bound}, 
the function class,
\[
\mathcal{F}{}_{K,[l_{n},\infty)}:=\left\{ K_{x,h}:\,x\in\mathbb{X}, h \geq l_{n} \right\},
\]
should be not too complex. One common approach is to assume that $\mathcal{F}{}_{K,[l_{n},\infty)}$
is a uniformly bounded VC-class, which is defined imposing appropriate bounds on the metric entropy of the function class  \citep{GineG1999,Sriperumbudur2012}.


\begin{assumption} \label{ass:kde_vc} Let $K:\mathbb{R}^{d}\to\mathbb{R}$
	be a kernel function with $\left\Vert K\right\Vert _{\infty},\left\Vert K\right\Vert _{2}<\infty$. We assume that,
	\[
	\mathcal{F}{}_{K,[l_{n},\infty)}:=\left\{ K_{x,h}:\,x\in\mathbb{X},h\geq l_{n}\right\}
	\]
	is a uniformly bounded VC-class with dimension $\nu$, i.e.,
	there exists positive numbers $A$ and $\nu$ such that, for every probability
	measure $Q$ on $\mathbb{R}^{d}$ and for every $\epsilon\in(0,\left\Vert K\right\Vert _{\infty})$,
	the covering numbers $\mathcal{N}(\mathcal{F}_{K,[l_{n},\infty)},L_{2}(Q),\epsilon)$
	satisfies
	\[
	\mathcal{N}(\mathcal{F}_{K,[l_{n},\infty)},L_{2}(Q),\epsilon)\leq\left(\frac{A\left\Vert K\right\Vert _{\infty}}{\epsilon}\right)^{\nu},
	\]
	where the covering number is defined as the minimal number of open balls of radius $\epsilon$ with respect to  $L_2(Q)$ distance whose centers are in $\mathcal{F}{}_{K,[l_{n},\infty)}$ to cover $\mathcal{F}{}_{K,[l_{n},\infty)}$.
\end{assumption}



Since $[l_{n},\infty) \subset (0,\infty)$, one sufficient condition for Assumption~\ref{ass:kde_vc} is  to impose uniformly bounded VC class condition on a larger function class,
\[
\mathcal{F}_{K,(0,\infty)}=\left\{ K_{x,h}:\,x\in\mathbb{X},h>0\right\}. 
\]
This is implied by condition ($K$) in \citet{GineKZ2004} or condition
($K_{1}$) in \citet{GineG2001}, which are standard conditions to assume for the uniform bound on the KDE. In particular, the condition is satisfied when
$K(x)=\phi(p(x))$, where $p$ is a polynomial and $\phi$ is a bounded
real function of bounded variation as in \citet{NolanP1987}, which covers commonly used kernels, such as Gaussian, Epanechnikov, Uniform, etc.

Under Assumption~\ref{ass:integrable} and~\ref{ass:kde_vc}, we derive our main concentration inequality for $\sup_{h\geq l_{n},x\in\mathbb{X}}\bigl|\hat{p}_{h}(x)-p_{h}(x)\bigr|$.
\begin{theorem} \label{thm:kde_band_ray_uniform}
	Let $P$ be a probability distribution and let $K$ be a kernel function
	satisfying Assumption~\ref{ass:integrable} and~\ref{ass:kde_vc}.
	Then, with probability at least $1-\delta$, 
	\begin{equation}
	\sup_{h\geq l_{n},x\in\mathbb{X}}\left|\hat{p}_{h}(x)-p_{h}(x)\right|
	\leq C\left(\frac{\left(\log\left(1 / l_{n}\right)\right)_{+}}{nl_{n}^{d}}+\sqrt{\frac{\left(\log\left(1 / l_{n}\right)\right)_{+}}{nl_{n}^{2d-d_{\mathrm{vol}}+\epsilon}}}
	+\sqrt{\frac{\log\left(2 / \delta\right)}{nl_{n}^{2d-d_{\mathrm{vol}}+\epsilon}}}+\frac{\log\left(2  / \delta\right)}{nl_{n}^{d}}\right),\label{eq:kde_band_ray_uniform_bound}
	\end{equation}
	for any $\epsilon\in (0,d_{{\rm vol}})$, where $C$ is a constant depending only on $A$, $\left\Vert K\right\Vert _{\infty}$, $d$,
	$\nu$, $d_{\mathrm{vol}}$, $C_{k=2,P,K,\epsilon}$, $\epsilon$. Further, if $d_{{\rm vol}}=0$ or under Assumption~\ref{ass:dimension_exact}, $\epsilon$ can be $0$ in \eqref{eq:kde_band_ray_uniform_bound}.
	
\end{theorem}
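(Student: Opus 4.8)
The plan is to follow the empirical-process strategy already outlined in the paragraph preceding the theorem, combining Talagrand's inequality (Proposition~\ref{prop:function_talagrand}) with the VC-type expectation bound (Proposition~\ref{prop:function_vc_bound}), and feeding in the moment estimate from Lemma~\ref{lem:kde_kernel_bound_two}. First I would set up the centered function class $\mathcal{G} := \{ f - \mathbb{E}_P f : f \in \tilde{\mathcal{F}}_{K,[l_n,\infty)} \}$ so that the left-hand side of \eqref{eq:kde_band_ray_uniform_bound} equals $Z := \sup_{g \in \mathcal{G}} |\tfrac1n \sum_i g(X_i)|$ via the identity \eqref{eq:kde_supremum_function_class}. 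Separability in $L_\infty$ follows from Assumption~\ref{ass:kde_vc} (the VC/covering hypothesis lets us restrict to a countable dense subfamily, e.g. rational $x$ and $h$, using continuity of $x,h \mapsto K_{x,h}$ in $L_2(Q)$; one can invoke the standard reduction here rather than belabor it).

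Next I would identify the three parameters needed for both propositions. For the envelope: each $g = (1/h^d) K_{x,h} - \mathbb{E}_P[(1/h^d)K_{x,h}]$ satisfies $\|g\|_\infty \le 2 l_n^{-d} \|K\|_\infty =: B$, using $h \ge l_n$. For the variance: $\mathbb{E}_P g^2 \le \mathbb{E}_P[(1/h^{2d}) K_{x,h}^2] \le h^{-2d} C_{2,P,K,\epsilon} h^{d_{\mathrm{vol}}-\epsilon} \le C_{2,P,K,\epsilon} \, l_n^{-(2d - d_{\mathrm{vol}} + \epsilon)} =: \sigma^2$, where the middle inequality is exactly Lemma~\ref{lem:kde_kernel_bound_two} with $k=2$, and the last step uses $h \ge l_n$ together with the fact that the exponent $2d - d_{\mathrm{vol}} + \epsilon$ is positive (since $d_{\mathrm{vol}} \le d$). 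For the VC dimension: Assumption~\ref{ass:kde_vc} gives that $\mathcal{F}_{K,[l_n,\infty)}$ is a uniformly bounded VC class with dimension $\nu$; rescaling by the constant $1/h^d$ and recentering changes the envelope by a bounded factor and leaves the covering-number exponent $\nu$ unchanged (adjusting the constant $A$ to some $A'$), so $\mathcal{G}$ is again a uniformly bounded VC class with the same $\nu$. Then Proposition~\ref{prop:function_vc_bound} gives
\[
\mathbb{E}_P[Z] \le C_1\left( \frac{\nu B}{n} \log\frac{A'B}{\sigma} + \sqrt{\frac{\nu \sigma^2}{n} \log\frac{A'B}{\sigma}} \right).
\]
Here the logarithmic factor $\log(A'B/\sigma)$ is, after substituting the expressions for $B$ and $\sigma$, of order $\log(1/l_n)$ up to an additive constant and a constant multiple (the powers of $l_n$ inside the log produce a constant times $\log(1/l_n)$, and the fixed constants $A', \|K\|_\infty, C_{2,P,K,\epsilon}$ contribute an $O(1)$ term absorbable once $l_n$ is small); this is where the $(\log(1/l_n))_+$ in the bound comes from, the positive-part arising because for $l_n \ge 1$ the log could be negative and one must keep the bound nonnegative.

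Then I would plug $\mathbb{E}_P[Z]$ into Talagrand's inequality. Proposition~\ref{prop:function_talagrand} gives, with probability $\ge 1-\delta$ (applied with $\delta/2$, or simply $\delta$ — I would match the $2/\delta$ in the statement by a union-type adjustment or just rename),
\[
Z \le \mathbb{E}_P[Z] + \sqrt{\frac{2}{n}\log\frac{1}{\delta}\left(\sigma^2 + 2 B\, \mathbb{E}_P[Z]\right)} + \frac{2B}{3n}\log\frac1\delta .
\]
The remaining work is bookkeeping: expand $\sqrt{a+b} \le \sqrt a + \sqrt b$, substitute $B = 2\|K\|_\infty l_n^{-d}$ and $\sigma^2 = C_{2,P,K,\epsilon} l_n^{-(2d-d_{\mathrm{vol}}+\epsilon)}$, and collect terms. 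The $\sqrt{(2/n)\log(1/\delta)\,\sigma^2}$ term yields the $\sqrt{\log(2/\delta)/(n l_n^{2d-d_{\mathrm{vol}}+\epsilon})}$ summand; the $(2B/3n)\log(1/\delta)$ term yields $\log(2/\delta)/(n l_n^d)$; the two pieces of $\mathbb{E}_P[Z]$ yield the $(\log(1/l_n))_+ /(n l_n^d)$ and $\sqrt{(\log(1/l_n))_+ /(n l_n^{2d-d_{\mathrm{vol}}+\epsilon})}$ summands; and the cross term $\sqrt{(2/n)\log(1/\delta)\cdot 2B\,\mathbb{E}_P[Z]}$ must be shown to be dominated by the already-listed terms (via $2\sqrt{uv} \le u + v$, splitting it into a multiple of $B\log(1/\delta)/n$ plus a multiple of $\mathbb{E}_P[Z]$, both already present). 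Finally, for the last sentence of the theorem: if $d_{\mathrm{vol}} = 0$ the kernel is bounded and the variance bound $\sigma^2 \le \|K\|_\infty^2 l_n^{-2d}$ holds trivially with $\epsilon = 0$; and under Assumption~\ref{ass:dimension_exact}, Lemma~\ref{lem:kde_kernel_bound_two} itself permits $\epsilon = 0$, so the same computation goes through verbatim. I expect the main obstacle to be purely organizational — correctly tracking how the constants $A$, $\|K\|_\infty$, $C_{2,P,K,\epsilon}$, $d$, $\nu$, $d_{\mathrm{vol}}$, $\epsilon$ enter the logarithmic factor and verifying it is genuinely $O((\log(1/l_n))_+)$ uniformly in small $l_n$ — rather than any deep inequality, since all the hard analytic content is already packaged in Lemma~\ref{lem:kde_kernel_bound_two} and the two cited propositions.
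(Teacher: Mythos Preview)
Your overall strategy---combine Talagrand's inequality with the VC-type expectation bound and feed in the variance estimate from Lemma~\ref{lem:kde_kernel_bound_two}---is exactly the paper's approach (packaged there as Theorem~23, which is then applied to $\tilde{\mathcal{F}}_{K,[l_n,\infty)}$). The endpoint bookkeeping you describe is also correct.

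However, there is a genuine gap in your treatment of the covering-number bound for the normalized class. You write that ``rescaling by the constant $1/h^d$ \ldots\ leaves the covering-number exponent $\nu$ unchanged (adjusting the constant $A$ to some $A'$).'' But $1/h^d$ is \emph{not} a single constant: it varies with $h$ across the class $\tilde{\mathcal{F}}_{K,[l_n,\infty)}$, so passing from $\mathcal{F}_{K,[l_n,\infty)}$ to $\tilde{\mathcal{F}}_{K,[l_n,\infty)}$ is not a uniform rescaling and there is no general principle that preserves the VC exponent under such a function-dependent multiplication. The paper deals with this explicitly: it covers the bandwidth range $[l_n,(\eta/2\|K\|_\infty)^{-1/d}]$ by intervals of length $\sim l_n^{d+1}\eta/\|K\|_\infty$, covers $\mathcal{F}_{K,[l_n,\infty)}$ at scale $l_n^d\eta/2$, takes products $h_i^{-d}f_j$, and adds one extra ball for the regime $h>(\eta/2\|K\|_\infty)^{-1/d}$ where all functions are already small. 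This yields
\[
\sup_Q \mathcal{N}(\tilde{\mathcal{F}}_{K,[l_n,\infty)},L_2(Q),\eta)\le\Bigl(\tfrac{2Ad\|K\|_\infty}{l_n^d\,\eta}\Bigr)^{\nu+2},
\]
i.e.\ the exponent increases to $\nu+2$ and the constant picks up a factor $l_n^{-d}$. After taking logs this still contributes only $O((\log(1/l_n))_+)$, so your final bound is right, but the justification you gave for this step does not work and needs to be replaced by an actual covering construction along these lines.
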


When $\delta$ is fixed and $l_{n}<1$, the dominating terms in \eqref{eq:kde_band_ray_uniform_bound}
are $\frac{\log(1/l_{n})}{nl_{n}^{d}}$ and $\sqrt{\frac{\log(1/l_{n})}{nl_{n}^{2d-d_{\mathrm{vol}}}}}$.
If $l_{n}$ does not vanish too rapidly, then the second term dominates the upper bound in \eqref{eq:kde_band_ray_uniform_bound} as in the following corollary.

\begin{corollary}
	\label{cor:kde_band_ray_uniform_probconv}
	
	Let $P$ be a probability distribution and let $K$ be a kernel function
	satisfying Assumption~\ref{ass:integrable} and~\ref{ass:kde_vc}.
	Fix $\epsilon\in(0,d_{\rm{vol}})$. Further, if $d_{{\rm vol}}=0$ or under Assumption~\ref{ass:dimension_exact}, $\epsilon$ can be $0$.
	Suppose 
	\[
	\limsup_{n}\frac{\left(\log\left(1 / \ell_{n}\right)\right)_{+}+\log\left(2 / \delta\right)}{n\ell_{n}^{d_{\mathrm{vol}}-\epsilon}}<\infty.
	\]
	Then, with probability at least $1-\delta$, 
	\begin{equation}
	\sup_{h\geq l_{n},x\in\mathbb{X}}\left|\hat{p}_{h}(x)-p_{h}(x)\right|
	\leq C'\sqrt{\frac{(\log(\frac{1}{l_{n}}))_{+}+\log(\frac{2}{\delta})}{nl_{n}^{2d-d_{\mathrm{vol}}+\epsilon}}},
	\label{eq:kde_band_ray_uniform_probconv}
	\end{equation}
	where $C'$
	depending only on $A$, $\left\Vert K\right\Vert _{\infty}$, $d$,
	$\nu$, $d_{\mathrm{vol}}$, $C_{k=2,P,K,\epsilon}$, $\epsilon$.
	
\end{corollary}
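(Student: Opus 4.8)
The plan is to deduce the corollary directly from Theorem~\ref{thm:kde_band_ray_uniform}: under the stated growth condition on $\{l_n\}$, I will show that each of the four terms on the right-hand side of \eqref{eq:kde_band_ray_uniform_bound} is bounded by a constant multiple of $\sqrt{\bigl((\log(1/l_n))_+ + \log(2/\delta)\bigr)/\bigl(n l_n^{2d - d_{\mathrm{vol}} + \epsilon}\bigr)}$, so that their sum is as well.

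First I set $M := \sup_n \frac{(\log(1/l_n))_+ + \log(2/\delta)}{n\, l_n^{d_{\mathrm{vol}} - \epsilon}}$. Each term of this sequence is finite because $l_n > 0$, and the hypothesis forces $\limsup_n$ to be finite, hence $M < \infty$. (If $d_{\mathrm{vol}} = 0$ with $\epsilon = 0$, the exponent $d_{\mathrm{vol}} - \epsilon$ vanishes and the bookkeeping below trivializes, since then the first and fourth terms of \eqref{eq:kde_band_ray_uniform_bound} coincide with the second and third.) Next, the two ``square-root'' terms — the second and third in \eqref{eq:kde_band_ray_uniform_bound} — are combined by the elementary inequality $\sqrt{a} + \sqrt{b} \le \sqrt{2}\,\sqrt{a+b}$ applied with $a = \frac{(\log(1/l_n))_+}{n l_n^{2d - d_{\mathrm{vol}} + \epsilon}}$ and $b = \frac{\log(2/\delta)}{n l_n^{2d - d_{\mathrm{vol}} + \epsilon}}$, which already yields $\sqrt{2}\,\sqrt{\bigl((\log(1/l_n))_+ + \log(2/\delta)\bigr)/\bigl(n l_n^{2d - d_{\mathrm{vol}} + \epsilon}\bigr)}$.

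For the first term I factor $\frac{(\log(1/l_n))_+}{n l_n^d} = \sqrt{\frac{(\log(1/l_n))_+}{n l_n^{2d - d_{\mathrm{vol}} + \epsilon}}}\cdot\sqrt{\frac{(\log(1/l_n))_+}{n l_n^{d_{\mathrm{vol}} - \epsilon}}}$ — the exponents of $l_n$ sum to $2d$ and the numerators multiply to $\bigl((\log(1/l_n))_+\bigr)^2$ — and bound the second factor by $\sqrt{M}$; the same manipulation gives $\frac{\log(2/\delta)}{n l_n^d} \le \sqrt{M}\,\sqrt{\frac{\log(2/\delta)}{n l_n^{2d - d_{\mathrm{vol}} + \epsilon}}}$. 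Collecting the four bounds, the right-hand side of \eqref{eq:kde_band_ray_uniform_bound} is at most $C(1+\sqrt{M})$ times the sum of the two square-root terms, hence at most $\sqrt{2}\,C(1+\sqrt{M})\,\sqrt{\bigl((\log(1/l_n))_+ + \log(2/\delta)\bigr)/\bigl(n l_n^{2d - d_{\mathrm{vol}} + \epsilon}\bigr)}$, which is exactly \eqref{eq:kde_band_ray_uniform_probconv} with $C' = \sqrt{2}\,C(1+\sqrt{M})$.

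There is no substantive obstacle: the content is entirely in Theorem~\ref{thm:kde_band_ray_uniform}, and this corollary is a repackaging. The only points deserving care are (i) that the growth hypothesis is used precisely to absorb the two terms of order $(\log)/(n l_n^d)$ — which trace back to the $B/n$ contributions in Talagrand's inequality (Proposition~\ref{prop:function_talagrand}) and the VC bound (Proposition~\ref{prop:function_vc_bound}) — into the term of order $\sqrt{(\log)/(n l_n^{2d - d_{\mathrm{vol}} + \epsilon})}$, which dominates whenever $l_n$ does not decay too fast; and (ii) that, strictly speaking, $C'$ picks up a dependence on the value $M$ of the supremum, so if one wants $C'$ to depend only on $A$, $\left\Vert K\right\Vert_\infty$, $d$, $\nu$, $d_{\mathrm{vol}}$, $C_{k=2,P,K,\epsilon}$, $\epsilon$ one should either treat $\limsup_n$ as bounded by an absolute constant or restrict to $n$ large enough that the sequence lies below twice its $\limsup$.
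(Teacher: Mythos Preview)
Your proposal is correct and follows essentially the same approach as the paper's own proof: both start from the four-term bound of Theorem~\ref{thm:kde_band_ray_uniform}, factor the $1/(n l_n^d)$ terms as a product of two square roots with exponents summing to $2d$, and use the growth hypothesis to bound the factor $\sqrt{(\log\cdot)/(n l_n^{d_{\mathrm{vol}}-\epsilon})}$ by a constant. Your remark (ii) on the dependence of $C'$ on $M$ is a fair observation that the paper leaves implicit.
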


\subsection{Fixed bandwidth}
\label{subsec:kde_band_one}

In this subsection, we prove a finite-sample uniform convergence bound on kernel density estimators for one {\it fixed} choice  $h_n >0$ of the bandwidth (we leave the dependence on $n$ explicit in our notation to emphasize that the choice of the bandwidth may still depend on $n$). We are interested in a high probability bound on 
\[
\sup_{x\in\mathbb{X}}\left|\hat{p}_{h_{n}}(x)-p_{h_{n}}(x)\right|.
\]
Of course, the above quantity can be bounded by the results in the previous subsection because
\begin{equation}
\sup_{x\in\mathbb{X}}\left|\hat{p}_{h_{n}}(x)-p_{h_{n}}(x)\right|\leq\sup_{h\geq h_{n},x\in\mathbb{X}}\left|\hat{p}_{h}(x)-p_{h}(x)\right|,\label{eq:kde_band_one_kdebound}
\end{equation}
Therefore, the convergence bound uniform on a ray of bandwidths in Theorem
\ref{thm:kde_band_ray_uniform} and Corollary~\ref{cor:kde_band_ray_uniform_probconv}
is applicable to fixed bandwidth cases.

However, if the set $\mathbb{X}$ is bounded, that is, if there exists $R>0$ such that $\mathbb{X}\subset\mathbb{B}_{\mathbb{R}^{d}}(0,R)$, then, for the kernel density estimator with a  $M_K$-Lipschitz continuous kernel and fixed bandwidth, we can derive a uniform convergence bound without the finite VC condition of \citep{GineG2001, GineKZ2004}
based on the following lemma.
\begin{lemma} \label{lem:kde_vc}
	Suppose there exists $R>0$ with $\mathbb{X}\subset\mathbb{B}_{\mathbb{R}^{d}}(0,R)$. Let the kernel $K$ is $M_K$-Lipschitz continuous. Then for all $\eta\in\left(0,\left\Vert K\right\Vert _{\infty}\right)$,
	the supremum of the $\eta$-covering number $\mathcal{N}(\mathcal{F}_{K,h},L_{2}(Q),\eta)$
	over all measure $Q$ is upper bounded as 
	\[
	\sup_{Q}\mathcal{N}(\mathcal{F}_{K,h},L_{2}(Q),\eta)\leq\left(\frac{2RM_{K}h^{-1}+\left\Vert K\right\Vert _{\infty}}{\eta}\right)^{d}.
	\]
\end{lemma}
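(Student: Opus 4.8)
The plan is to reduce the $L_2(Q)$ covering number to an $L_\infty$ covering number of $\mathcal{F}_{K,h}$, then transfer it to a Euclidean covering number of the parameter set $\mathbb{X}$, exploiting the fact that the map $x\mapsto K_{x,h}$ is Lipschitz because $K$ is. First I would note that for every probability measure $Q$ and all $f,g$, $\|f-g\|_{L_2(Q)}\le\|f-g\|_\infty$, so it suffices to bound the $\eta$-covering number of $\mathcal{F}_{K,h}$ in the sup-norm, uniformly in $Q$ (any sup-norm $\eta$-net is an $L_2(Q)$ $\eta$-net). Also, $K$ Lipschitz implies $K$ continuous, hence each $K_{x,h}$ is Borel measurable and bounded, so all the norms involved are well defined.

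Next I would establish the Lipschitz estimate: for $x,y\in\mathbb{X}$,
\[
\|K_{x,h}-K_{y,h}\|_\infty=\sup_{z\in\mathbb{R}^d}\left|K\!\left(\frac{x-z}{h}\right)-K\!\left(\frac{y-z}{h}\right)\right|\le\frac{M_K}{h}\,\|x-y\|,
\]
so the map $x\mapsto K_{x,h}$ from $(\mathbb{X},\|\cdot\|)$ into $(\mathcal{F}_{K,h},\|\cdot\|_\infty)$ is $(M_K/h)$-Lipschitz. Consequently, the image under this map of any Euclidean $(\eta h/M_K)$-net of $\mathbb{X}$ is a sup-norm $\eta$-net of $\mathcal{F}_{K,h}$, and its centers lie in $\mathcal{F}_{K,h}$, as required by the definition of covering number in Assumption~\ref{ass:kde_vc}.

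It then remains to bound the Euclidean $\varepsilon$-covering number of $\mathbb{X}$ with $\varepsilon=\eta h/M_K$. Since $\mathbb{X}\subset\mathbb{B}_{\mathbb{R}^d}(0,R)$, I would use the standard volumetric argument: take a maximal $\varepsilon$-separated subset $T\subset\mathbb{X}$, which is automatically an $\varepsilon$-net of $\mathbb{X}$; the balls $\mathbb{B}_{\mathbb{R}^d}(t,\varepsilon/2)$, $t\in T$, are pairwise disjoint and contained in $\mathbb{B}_{\mathbb{R}^d}(0,R+\varepsilon/2)$, so comparing $d$-dimensional Lebesgue volumes gives $|T|\le(1+2R/\varepsilon)^d$. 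Substituting $\varepsilon=\eta h/M_K$ yields
\[
|T|\le\left(1+\frac{2RM_K h^{-1}}{\eta}\right)^d=\left(\frac{2RM_K h^{-1}+\eta}{\eta}\right)^d,
\]
and since $\eta\in(0,\|K\|_\infty)$ this is at most $\left(\dfrac{2RM_K h^{-1}+\|K\|_\infty}{\eta}\right)^d$, which is exactly the claimed bound (the strict separation keeps the covering by \emph{open} $\eta$-balls, matching the definition).

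There is no real obstacle here; this is a routine metric-entropy estimate. The only points requiring a little care are keeping the direction of the packing/covering inequalities straight in the volumetric step, and the cosmetic replacement of $\eta$ by $\|K\|_\infty$ in the numerator, which is precisely where the hypothesis $\eta<\|K\|_\infty$ enters.
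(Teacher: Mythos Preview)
Your proposal is correct and follows essentially the same approach as the paper: both use the Lipschitz property of $K$ to show that $x\mapsto K_{x,h}$ is $(M_K/h)$-Lipschitz from $(\mathbb{X},\|\cdot\|)$ into the relevant function-space metric, reduce to a Euclidean covering number of $\mathbb{X}\subset\mathbb{B}_{\mathbb{R}^d}(0,R)$, bound that by the standard volumetric packing argument to get $(1+2R/\varepsilon)^d$, and then use $\eta<\|K\|_\infty$ for the final cosmetic step. The only cosmetic difference is that you pass through the $L_\infty$ norm first via $\|f-g\|_{L_2(Q)}\le\|f-g\|_\infty$, whereas the paper bounds $\|K_{x,h}-K_{y,h}\|_{L_2(Q)}$ directly by the same pointwise Lipschitz estimate.
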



\begin{corollary}
	
	\label{cor:kde_uniform_band_one_probconv}
	Suppose there exists $R>0$ with $\mathbb{X}\subset\mathbb{B}_{\mathbb{R}^{d}}(0,R)$. Let $K$ be a $M_K$-Lipschitz continuous kernel function
	satisfying Assumption~\ref{ass:integrable}.
	Fix $\epsilon\in(0,d_{\rm{vol}})$. Further, if $d_{{\rm vol}}=0$ or under Assumption~\ref{ass:dimension_exact}, $\epsilon$ can be $0$. Suppose
	\[
	\limsup_{n}\frac{\left(\log\left(1 / h_{n}\right)\right)_{+}+\log\left(2 / \delta\right)}{nh_{n}^{d_{\mathrm{vol}}-\epsilon}}<\infty.
	\]
	Then  with probability at least $1-\delta$,
	\begin{equation}
	\sup_{x\in\mathbb{X}}\left|\hat{p}_{h_{n}}(x)-p_{h_{n}}(x)\right|
	\leq C''\sqrt{\frac{(\log(\frac{1}{ h_{n}}))_{+}+\log(\frac{2}{\delta})}{nh_{n}^{2d-d_{\mathrm{vol}}+\epsilon}}},
	\label{eq:kde_uniform_band_one_probconv}
	\end{equation}
	where $C''$ is a constant depending only on $R$, $M_{K}$, $\left\Vert K\right\Vert _{\infty}$,
	$d$, $\nu$, $d_{\mathrm{vol}}$, $C_{k=2,P,K,\epsilon}$, $\epsilon$.
	
\end{corollary}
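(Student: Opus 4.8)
The plan is to run the same argument that proves Theorem~\ref{thm:kde_band_ray_uniform}, but with the metric-entropy input supplied by Lemma~\ref{lem:kde_vc} in place of Assumption~\ref{ass:kde_vc} (which is why no finite-VC hypothesis on the kernel is needed here), and then to simplify the resulting bound exactly as Corollary~\ref{cor:kde_band_ray_uniform_probconv} is obtained from Theorem~\ref{thm:kde_band_ray_uniform}. First I would write the target as a normalized, centered empirical process: with $\bar{\mathcal F}_{K,h_n}:=\{K_{x,h_n}-\mathbb{E}[K_{x,h_n}]:x\in\mathbb{X}\}$ the centered version of the single-bandwidth class $\mathcal F_{K,h_n}=\{K_{x,h_n}:x\in\mathbb{X}\}$, and using $\hat p_{h_n}(x)-p_{h_n}(x)=\frac{1}{h_n^{d}}\bigl(\frac1n\sum_{i=1}^{n}K_{x,h_n}(X_i)-\mathbb{E}[K_{x,h_n}(X)]\bigr)$, we have
\[
\sup_{x\in\mathbb{X}}\bigl|\hat p_{h_n}(x)-p_{h_n}(x)\bigr|=\frac{1}{h_n^{d}}\sup_{f\in\bar{\mathcal F}_{K,h_n}}\Bigl|\frac1n\sum_{i=1}^{n}f(X_i)\Bigr|.
\]
Since $\mathbb{X}\subset\mathbb{R}^d$ is separable and $K$ is Lipschitz (hence continuous), $\bar{\mathcal F}_{K,h_n}$ is separable in $L_\infty$, so Proposition~\ref{prop:function_talagrand} applies with $B=2\|K\|_\infty$ and, by Lemma~\ref{lem:kde_kernel_bound_two} with $k=2$, with $\sigma^2=C_{2,P,K,\epsilon}\,h_n^{d_{\mathrm{vol}}-\epsilon}$ (and $\epsilon$ may be taken to be $0$ when $d_{\mathrm{vol}}=0$ or Assumption~\ref{ass:dimension_exact} holds).

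Next I would control the expected supremum $\mathbb{E}_P[Z]$ appearing in Talagrand's inequality via Proposition~\ref{prop:function_vc_bound}. The covering numbers of the centered class $\bar{\mathcal F}_{K,h_n}$ agree with those of $\mathcal F_{K,h_n}$ up to the usual constant factors incurred by centering (exactly as in the proof of Theorem~\ref{thm:kde_band_ray_uniform}), so Lemma~\ref{lem:kde_vc} lets me invoke Proposition~\ref{prop:function_vc_bound} with $\nu=d$ and $A\|K\|_\infty=2RM_Kh_n^{-1}+\|K\|_\infty$; the one new feature relative to the proof of Theorem~\ref{thm:kde_band_ray_uniform} is that this ``$A$'' now grows like $h_n^{-1}$. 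Substituting $B$, $\sigma^2$, $\nu$ and $A$ into the VC bound and then into Proposition~\ref{prop:function_talagrand}, and finally dividing through by $h_n^{d}$, yields a bound with the same four-term structure as \eqref{eq:kde_band_ray_uniform_bound} but with $l_n$ replaced by $h_n$; here one uses that $\log\frac{AB}{\sigma}\le c_1+c_2(\log(1/h_n))_+$ for constants $c_1,c_2$ depending only on $R$, $M_K$, $\|K\|_\infty$, $C_{2,P,K,\epsilon}$, $d_{\mathrm{vol}}$, $\epsilon$, so the entropy factor contributes only $(\log(1/h_n))_+$, and the cross term $\sqrt{\tfrac2n\log\tfrac1\delta\,(2B\,\mathbb{E}_P[Z])}$ is absorbed into the other terms exactly as in \citet[Theorem~3.1]{Sriperumbudur2012}.

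The passage to \eqref{eq:kde_uniform_band_one_probconv} is then identical to the derivation of Corollary~\ref{cor:kde_band_ray_uniform_probconv} from Theorem~\ref{thm:kde_band_ray_uniform}: under $\limsup_n\frac{(\log(1/h_n))_++\log(2/\delta)}{nh_n^{d_{\mathrm{vol}}-\epsilon}}<\infty$, the ratio of $\frac{(\log(1/h_n))_+}{nh_n^{d}}$ to $\sqrt{\frac{(\log(1/h_n))_+}{nh_n^{2d-d_{\mathrm{vol}}+\epsilon}}}$ equals $\sqrt{\frac{(\log(1/h_n))_+}{nh_n^{d_{\mathrm{vol}}-\epsilon}}}$, which stays bounded, and likewise $\frac{\log(2/\delta)}{nh_n^{d}}$ is dominated by $\sqrt{\frac{\log(2/\delta)}{nh_n^{2d-d_{\mathrm{vol}}+\epsilon}}}$; hence the two terms linear in $1/n$ are absorbed into the square-root terms, and collapsing the sum of square roots into a single one gives \eqref{eq:kde_uniform_band_one_probconv}. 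I expect the only genuinely delicate bookkeeping to be carrying the $h_n$-dependence of the covering-number constant $A$ through the logarithm $\log\frac{AB}{\sigma}$ and verifying that it degrades the bound by no more than the $(\log(1/h_n))_+$ factor already present (up to constants depending on $R$ and $M_K$); the centering of the function class and the measurability/separability hypotheses of Proposition~\ref{prop:function_talagrand} are routine given that $K$ is Lipschitz and $\mathbb{X}$ is bounded.
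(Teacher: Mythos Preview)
Your proposal is correct and follows essentially the same route as the paper. The only cosmetic difference is that the paper packages the Talagrand\,+\,VC combination into a single intermediate result (applied to the normalized class $\tilde{\mathcal F}_{K,h_n}=\{h_n^{-d}K_{x,h_n}:x\in\mathbb X\}$), whereas you apply Propositions~\ref{prop:function_talagrand} and~\ref{prop:function_vc_bound} directly to the centered, unnormalized class and divide by $h_n^{d}$ at the end; both routes feed in Lemma~\ref{lem:kde_vc} for the entropy and Lemma~\ref{lem:kde_kernel_bound_two} for the variance, and both then simplify the four-term bound under the $\limsup$ hypothesis exactly as in Corollary~\ref{cor:kde_band_ray_uniform_probconv}.
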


\section{Lower bound for the convergence of the Kernel Density Estimator}
\label{sec:lower}

Consider the fixed bandwidth case. In Corollary
\ref{cor:kde_uniform_band_one_probconv}, it was shown that, with probability $1-\delta$,
\[
\sup_{x\in\mathbb{X}}\left|\hat{p}_{h_{n}}(x)-p_{h_{n}}(x)\right|\leq C_{\delta}''\sqrt{\frac{\left(\log\left(1/h_{n}\right)\right)_{+}}{nh_{n}^{2d-d_{\mathrm{vol}}}}},
\]
where $C_{\delta}''$ might depend on $\delta$ but not on $n$ or $h_{n}$. In this Section, we show that this upper bound is not improvable and is therefore optimal up
to a $\log(1/h_{n})$ term, by showing that there exists a high probability lower bound
of order $1/\sqrt{nh_{n}^{2d-d_{{\rm vol}}}}$. 

\begin{proposition}

\label{prop:bound_lower}

Suppose $P$ is a distribution satisfying Assumption~\ref{ass:dimension_exact_lower}
and with positive volume dimension $d_{{\rm vol}}>0$. Let $K$ be
a kernel function satisfying Assumption~\ref{ass:integrable}
with $k=1$ and $\lim_{t\to0}\inf_{\left\Vert x\right\Vert \leq t}K(x)>0$. Suppose
	$\lim_{n} n h_{n}^{d_{{\rm vol}}} = \infty$.
Then, with probability $1-\delta$, the following holds for all large enough $n$ and small enough $h_{n}$: 
\[
\sup_{x\in\mathbb{X}}\left|\hat{p}_{h_{n}}(x)-p_{h_{n}}(x)\right|\geq C_{P,K,\delta}\sqrt{\frac{1}{nh_{n}^{2d-d_{{\rm vol}}}}}.
\]
where $C_{P,K,\delta}$ is a constant depending only on $P$, $K,$and
$\delta$.

\end{proposition}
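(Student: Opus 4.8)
The plan is to produce a single point $x^* \in \mathbb{X}$ at which $\hat p_{h_n}(x^*) - p_{h_n}(x^*)$ is guaranteed to be large with high probability, and then use $\sup_{x \in \mathbb{X}} |\hat p_{h_n}(x) - p_{h_n}(x)| \geq |\hat p_{h_n}(x^*) - p_{h_n}(x^*)|$. The natural candidate for $x^*$ is (near) a data point: if $x^*$ is within distance $O(h_n)$ of some $X_i$, then the single summand $\frac{1}{n h_n^d} K\bigl(\frac{x^* - X_i}{h_n}\bigr)$ already contributes of order $\frac{1}{n h_n^d}$ to $\hat p_{h_n}(x^*)$, since $\lim_{t \to 0}\inf_{\|x\|\le t}K(x) > 0$ forces $K$ to be bounded below by a positive constant near the origin. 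So the strategy is: condition on the location of one observation, say $X_1$, and evaluate the discrepancy at $x^* = X_1$ (assuming $X_1 \in \mathbb{X}$, which happens with probability $P(\mathbb{X}) > 0$ by the volume-dimension positivity, or one restricts attention to a point of $\mathbb{X}$ guaranteed to be near a sample point).

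The key steps, in order. First, lower-bound $\hat p_{h_n}(x^*)$: at $x^* = X_1$ the term from $X_1$ itself is $\frac{1}{n h_n^d} K(0) \geq \frac{c_0}{n h_n^d}$ for a positive constant $c_0$ coming from the hypothesis on $K$ near $0$; the remaining $n-1$ terms are nonnegative only if $K \geq 0$, which is not assumed, so more care is needed — one should instead argue that with high probability no other $X_j$ lands within the effective support where $K$ could be very negative, or use that $\frac{1}{n h_n^d}\sum_{j \geq 2} K\bigl(\frac{X_1 - X_j}{h_n}\bigr)$ concentrates around its conditional mean $\frac{n-1}{n}\,\mathbb{E}\bigl[\frac{1}{h_n^d}K\bigl(\frac{X_1-X}{h_n}\bigr)\,\big|\,X_1\bigr]$. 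Second, upper-bound $p_{h_n}(x^*) = \mathbb{E}\bigl[\frac{1}{h_n^d}K\bigl(\frac{x - X}{h_n}\bigr)\bigr]\big|_{x=x^*}$: by Lemma~\ref{lem:kde_kernel_bound_two} with $k=1$ (using Assumption~\ref{ass:integrable} with $k=1$ and, since $\epsilon$ can be dropped under Assumption~\ref{ass:dimension_exact_lower}'s companion Assumption~\ref{ass:dimension_exact} — or one simply absorbs $\epsilon$), $\mathbb{E}_P\bigl[\bigl|K\bigl(\frac{x-X}{h_n}\bigr)\bigr|\bigr] \leq C h_n^{d_{\mathrm{vol}}}$ uniformly in $x$, so $|p_{h_n}(x^*)| \leq C h_n^{d_{\mathrm{vol}} - d}$. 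Third, compare the two scales: the signal from the $X_1$-term is $\asymp \frac{1}{n h_n^d}$ while the target is $\asymp h_n^{d_{\mathrm{vol}}-d}$; the signal dominates precisely when $n h_n^{d_{\mathrm{vol}}} \to 0$ — but the hypothesis is the opposite, $n h_n^{d_{\mathrm{vol}}} \to \infty$. Hence the single-point heuristic is too crude and must be replaced by a fluctuation (variance) argument.

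So the real argument is a second-moment / anti-concentration one. Fix any $x_0 \in \mathbb{X}$ attaining (up to constants) the lower bound in Assumption~\ref{ass:dimension_exact_lower}, so $P(\mathbb{B}(x_0,r)) \geq c\, r^{d_{\mathrm{vol}}}$ for all small $r$. Write $\hat p_{h_n}(x_0) - p_{h_n}(x_0) = \frac{1}{n}\sum_i \bigl(Y_i - \mathbb{E}Y_i\bigr)$ with $Y_i = \frac{1}{h_n^d}K\bigl(\frac{x_0 - X_i}{h_n}\bigr)$. Using the positivity of $K$ near $0$ together with the mass lower bound, one gets $\mathrm{Var}(Y_i) \geq \mathbb{E}[Y_i^2] - (\mathbb{E}Y_i)^2 \gtrsim \frac{1}{h_n^{2d}} P(\mathbb{B}(x_0, c h_n)) - C^2 h_n^{2(d_{\mathrm{vol}}-d)} \gtrsim h_n^{d_{\mathrm{vol}} - 2d}$, where the subtracted square is lower order exactly because $n h_n^{d_{\mathrm{vol}}} \to \infty$ makes $h_n^{d_{\mathrm{vol}}}$ small relative to $1$ is not quite what's needed — rather $h_n^{d_{\mathrm{vol}}} \to 0$ (which follows) makes $h_n^{2(d_{\mathrm{vol}}-d)} = h_n^{d_{\mathrm{vol}}} \cdot h_n^{d_{\mathrm{vol}}-2d} \ll h_n^{d_{\mathrm{vol}}-2d}$. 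Thus $\mathrm{Var}\bigl(\hat p_{h_n}(x_0)\bigr) \gtrsim \frac{1}{n} h_n^{d_{\mathrm{vol}}-2d} = \frac{1}{n h_n^{2d - d_{\mathrm{vol}}}}$, which is exactly the claimed order squared. To turn a variance lower bound into a high-probability lower bound on $|\hat p_{h_n}(x_0) - p_{h_n}(x_0)|$ I would invoke the Paley–Zygmund inequality (or a direct two-sided Chebyshev / Berry–Esseen argument), which needs control of the fourth moment $\mathbb{E}[Y_i^4] \lesssim h_n^{-4d} P(\mathbb{B}(x_0,Ch_n)) \lesssim h_n^{d_{\mathrm{vol}}-4d}$ so that $\mathbb{E}[(\frac{1}{n}\sum(Y_i - \mathbb{E}Y_i))^4]$ is of the right order relative to the squared variance provided $n h_n^{d_{\mathrm{vol}}} \to \infty$ — this is precisely where the hypothesis is used. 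One subtlety: $K$ near $0$ positive but possibly negative elsewhere means $\mathbb{E}[Y_i^2]$ must be lower-bounded by integrating only over the region $\|x_0 - X_i\| \le t h_n$ where $K \geq c_0/2$, using $\lim_{t\to 0}\inf_{\|x\|\le t}K(x) > 0$; Assumption~\ref{ass:integrable} with $k=2$ (the default) simultaneously gives the matching upper bounds $\mathbb{E}[Y_i^2] \lesssim h_n^{d_{\mathrm{vol}}-2d}$ needed to keep all the moment estimates tight.

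The main obstacle I expect is the anti-concentration step: a lower bound on the variance does not by itself yield a with-probability-$(1-\delta)$ lower bound on the absolute deviation, because the deviation could be small on a large-probability event and huge on a small one. The cleanest fix is Paley–Zygmund, $\Pr\bigl(|S_n| \geq \tfrac12 \sqrt{\mathrm{Var}(S_n)}\bigr) \geq c\,\frac{(\mathrm{Var} S_n)^2}{\mathbb{E}[S_n^4]}$ with $S_n = \hat p_{h_n}(x_0) - p_{h_n}(x_0)$, together with the fourth-moment bound above which makes the right-hand side bounded below by a constant — but a constant, not $1-\delta$. To boost to $1-\delta$ one repeats the argument on $m = m(\delta)$ well-separated points $x_0^{(1)},\dots,x_0^{(m)} \in \mathbb{X}$ (separated by a constant multiple of the effective kernel width so the corresponding blocks of summands are "nearly independent", or one partitions the sample) so that the sup over them exceeds the threshold except with probability $(1-c)^m \leq \delta$; guaranteeing $m$ such points requires only that $\mathbb{X}$ is not essentially a single atom, which Assumption~\ref{ass:dimension_exact_lower} with $d_{\mathrm{vol}} > 0$ supplies. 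Alternatively, a Berry–Esseen bound for $S_n$ (valid since $n h_n^{d_{\mathrm{vol}}} \to \infty$ controls the Lyapunov ratio) shows $S_n/\sqrt{\mathrm{Var} S_n}$ is approximately standard normal, hence $|S_n| \geq z_{\delta}\sqrt{\mathrm{Var} S_n}$ with probability $\approx 1-\delta$ directly, which is the slickest route and I would present that, folding the constant $z_\delta$ into $C_{P,K,\delta}$.
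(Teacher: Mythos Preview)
Your final approach---fix $x_0\in\mathbb{X}$ attaining the $\liminf$ in Assumption~\ref{ass:dimension_exact_lower}, lower-bound $\mathrm{Var}\bigl(h_n^{-d}K_{x_0,h_n}(X)\bigr)$ via the positivity of $K$ near $0$ together with the mass lower bound, subtract off $(\mathbb{E}K_{x_0,h_n})^2$ using Lemma~\ref{lem:kde_kernel_bound_two} with $k=1$, and then run a CLT---is exactly the paper's proof; the only difference is that the paper uses Lindeberg--Feller rather than Berry--Esseen, checking the Lindeberg condition directly from $\|f_{x_0,h_n}\|_\infty \le h_n^{-d}\|K\|_\infty = o(s_n)$ as $nh_n^{d_{\rm vol}}\to\infty$, which spares you any third- or fourth-moment computation and makes the Paley--Zygmund/boosting detour unnecessary. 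One small correction: Assumption~\ref{ass:dimension_exact} is \emph{not} among the hypotheses here, so you cannot set $\epsilon=0$ in the $k=1$ bound; the paper instead takes any fixed $\epsilon<d_{\rm vol}/2$ in Lemma~\ref{lem:kde_kernel_bound_two}, which is enough because then $(\mathbb{E}_P[K_{x_0,h}])^2 \lesssim h^{2(d_{\rm vol}-\epsilon)} = o(h^{d_{\rm vol}})$ and the variance lower bound $\gtrsim h^{d_{\rm vol}-2d}$ still goes through.
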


This gives an immediate corollary for a ray of bandwidths.

\begin{corollary}

\label{cor:bound_lower_ray}

Assume the same condition as in Proposition \ref{prop:bound_lower}, and suppose $l_{n}\to 0$ with $n l_{n}^{d_{{\rm vol}}} \to \infty$. Then, with probability $1-\delta$, the following holds for all large $n$: 
\[
\sup_{h\geq l_{n}, x\in\mathbb{X}}\left|\hat{p}_{h}(x)-p_{h}(x)\right|\geq C_{P,K,\delta}\sqrt{\frac{1}{nl_{n}^{2d-d_{{\rm vol}}}}}.
\]

\end{corollary}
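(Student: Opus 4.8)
The plan is to deduce the statement from the fixed-bandwidth lower bound of Proposition~\ref{prop:bound_lower} by a one-line monotonicity argument. Since $l_{n}\in[l_{n},\infty)$, the bandwidth $h=l_{n}$ is an admissible choice in the supremum on the left-hand side, so for every realization of the sample
\[
\sup_{h\geq l_{n},\,x\in\mathbb{X}}\left|\hat{p}_{h}(x)-p_{h}(x)\right|\;\geq\;\sup_{x\in\mathbb{X}}\left|\hat{p}_{l_{n}}(x)-p_{l_{n}}(x)\right|,
\]
simply because the supremum on the left ranges over a superset of the index set of the supremum on the right. It therefore suffices to lower bound the right-hand side, which is exactly a fixed-bandwidth quantity with $h_{n}:=l_{n}$.

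Next I would verify that Proposition~\ref{prop:bound_lower} applies with this choice $h_{n}=l_{n}$. All hypotheses on $P$ (Assumption~\ref{ass:dimension_exact_lower}, together with $d_{{\rm vol}}>0$) and on $K$ (Assumption~\ref{ass:integrable} with $k=1$ and $\lim_{t\to0}\inf_{\left\Vert x\right\Vert \leq t}K(x)>0$) are assumed verbatim in the corollary, so they transfer directly. The remaining hypothesis $\lim_{n}nh_{n}^{d_{{\rm vol}}}=\infty$ becomes $\lim_{n}nl_{n}^{d_{{\rm vol}}}=\infty$, which is precisely what is assumed; and because $l_{n}\to0$, the qualifier ``for all small enough $h_{n}$'' in the proposition is automatically met once $n$ is large. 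Hence Proposition~\ref{prop:bound_lower} yields that, on an event of probability at least $1-\delta$,
\[
\sup_{x\in\mathbb{X}}\left|\hat{p}_{l_{n}}(x)-p_{l_{n}}(x)\right|\;\geq\;C_{P,K,\delta}\sqrt{\frac{1}{nl_{n}^{2d-d_{{\rm vol}}}}}
\]
for all sufficiently large $n$, with the same constant $C_{P,K,\delta}$.

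Finally I would combine the two displays. The event of probability $1-\delta$ furnished by Proposition~\ref{prop:bound_lower} is a property of the whole sequence $X_{1},X_{2},\ldots$ and does not depend on $n$; on that event the per-$n$ monotonicity inequality chains with the fixed-bandwidth bound to give $\sup_{h\geq l_{n},\,x\in\mathbb{X}}\left|\hat{p}_{h}(x)-p_{h}(x)\right|\geq C_{P,K,\delta}\,(nl_{n}^{2d-d_{{\rm vol}}})^{-1/2}$ for all large $n$, which is the assertion. There is no genuine obstacle: the only point requiring a moment's care is the order of quantifiers, but since the good event is fixed, the ``for all large $n$'' in the proposition passes through the monotonicity step unchanged, so this is a direct corollary rather than a new argument.
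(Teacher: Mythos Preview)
Your proof is correct and is exactly the ``immediate corollary'' the paper has in mind: specialize Proposition~\ref{prop:bound_lower} to $h_n=l_n$ (the hypotheses transfer verbatim, with $l_n\to 0$ ensuring the ``small enough $h_n$'' clause is eventually met) and then use the trivial monotonicity $\sup_{h\ge l_n,\,x}\ge \sup_{x}$ at $h=l_n$. The paper does not spell this out beyond calling it immediate, so your write-up is already more detailed than the original.
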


By combining the lower and upper bounds together, we  conclude that, with high probability,
\[
\sqrt{\frac{1}{n h_{n}^{2d-d_{{\rm vol}}}}}
\lesssim
\sup_{x\in\mathbb{X}} \left|\hat{p}_{h_{n}}(x)-p_{h_{n}}(x)\right|
\lesssim
\sqrt{\frac{(\log(\frac{1}{h_{n}}))_{+}}{nh_{n}^{2d-d_{\mathrm{vol}}}}},
\]
for all large enough $n$. Similar holds for a ray of bandwidths as well. They imply that the uniform convergence KDE bounds in our paper
are optimal up to $\log(1/h_{n})$ terms for both the fixed bandwidth and the ray on bandwidths cases.

\begin{example}[Example~\ref{ex:density_unbounded}, revisited]

Let $P$ be as in Example~\ref{ex:density_unbounded} and let $K$ be
any Lipschitz continuous kernel function with $K(0)>0$ and compact
support. It can be easily checked that the conditions in Corollary~\ref{cor:kde_uniform_band_one_probconv} are satisfied with $R = 2$, $d_{\rm vol} = d- \beta$ and the kernel satisfies the integrability Assumption~\ref{ass:integrable} with
$k =1, 2$. It can be also shown that $\lim_{t\to0}\inf_{\left\Vert x\right\Vert \leq t}K(x)>0$. Therefore, 
for small enough $h_{n}$, Corollary~\ref{cor:kde_uniform_band_one_probconv}
and Proposition~\ref{prop:bound_lower} imply
\[
C'\sqrt{\frac{1}{nh_{n}^{d+\beta}}}\leq\sup_{x\in\mathbb{X}}\left|\hat{p}_{h_{n}}(x)-p_{h_{n}}(x)\right|\leq C''\sqrt{\frac{\log(\frac{1}{h_{n}})}{nh_{n}^{d+\beta}}},
\]
with high probability for all large enough $n$.
That is, the $L_\infty$ convergence rate of the KDE is of order
$\sqrt{\frac{1}{nh_{n}^{d+\beta}}}$ (up to a $\log(1/h_{n})$ term).
Hence, although it has a Lebesgue density, its convergence rate is
different from $\sqrt{\frac{1}{nh_{n}^{d}}}$, which is the usual rate
for probability distributions with bounded Lebesgue density.

\end{example}

\section{Uniform convergence of the Derivatives of the Kernel Density Estimator}
\label{sec:derivative}

In this final section, we provide analogous finite-sample uniform convergence bound on
the derivatives of the kernel density estimator. For a nonnegative
integer vector $s=(s_{1},\ldots,s_{d})\in (\{0\} \cup \mathbb{N})^{d}$,
define $|s|=s_{1}+\cdots+s_{d}$ and 
\[
D^{s}:=\frac{\partial^{|s|}}{\partial x_{1}^{s_{1}}\cdots\partial x_{d}^{s_{d}}}.
\]
For $D^{s}$ operator to be well defined and interchange with integration,
we need the following smoothness condition on the kernel $K$.

\begin{assumption}
	
	\label{ass:derivative_leibniz}
	
	For given $s\in \left(\{0\} \cup \mathbb{N}\right)^d$, let $K:\mathbb{R}^{d}\to\mathbb{R}$
	be a kernel function satisfying such that the partial derivative
	$D^{s}K:\mathbb{R}^{d}\to\mathbb{R}$ exists and $\left\Vert D^{s}K\right\Vert _{\infty}<\infty$.
	
\end{assumption}

Under Assumption~\ref{ass:derivative_leibniz}, Leibniz's rule is
applicable and, for each $x \in \mathbb{X}$, $D^{s}\hat{p}_{h}(x)-D^{s}p_{h}(x)$ can be written
as 
\[
D^{s}\hat{p}_{h}(x)-D^{s}p_{h}(x) 
=\frac{1}{n}\sum_{i=1}^{n}\frac{1}{h^{d+|s|}}D^{s}K_{x,h}(X_{i})-\mathbb{E}_{P}\left[\frac{1}{h^{d+|s|}}D^{s}K_{x,h}\right],
\]
where $K_{x,h}(\cdot)=K\left(\frac{x-\cdot}{h}\right)$, as defined it in Section
\ref{sec:kde}. Following the arguments from Section~\ref{sec:kde}, let 
\[
\mathcal{F}{}_{K,[l_{n},\infty)}^{s}:=\left\{ D^{s}K_{x,h}:\,x\in\mathbb{X},h\geq l_{n}\right\} 
\]
be a class of unnormalized kernel functions centered on $\mathbb{X}$
and bandwidth greater than or equal to $l_{n}$, and let 
\[
\tilde{\mathcal{F}}{}_{K,[l_{n},\infty)}^{s}:=\left\{ \frac{1}{h^{d+|s|}}D^{s}K_{x,h}:\,x\in\mathbb{X},\,h\geq l_{n}\right\} 
\]
be a class of normalized kernel functions. Then 
$\sup_{h\geq l_{n},x\in\mathbb{X}}\left|D^{s}\hat{p}_{h}(x)-D^{s}p_{h}(x)\right|$
can be rewritten as 
\begin{equation}
\sup_{h\geq l_{n},x\in\mathbb{X}}\left|D^{s}\hat{p}_{h}(x)-D^{s}p_{h}(x)\right|
=\sup_{f\in\tilde{\mathcal{F}}{}_{K,[l_{n},\infty)}^{s}}\left|\frac{1}{n}\sum_{i=1}^{n}f(X_{i})-\mathbb{E}[f(X)]\right|.\label{eq:derivative_supremum_function_class}
\end{equation}

To derive a good upper bound on 
$
\sup_{h\geq l_{n},x\in\mathbb{X}}\left|D^{s}\hat{p}_{h}(x)-D^{s}p_{h}(x)\right|
$, 
it is important to first show a tight upper bound for
$\mathbb{E}_{P}[(D^{s}K_{x,h})^{2}]$. Towards that end, we impose the following integrability condition.
\begin{assumption}
    \label{ass:derivative_integrable}
	The derivative of kernel is such that
	\begin{equation}
	\int_{0}^{\infty}t^{d_{\mathrm{vol}}-1}\sup_{\left\Vert x\right\Vert \geq t}(D^s K)^{2}(x) dt<\infty.\label{eq:deriv_kde_condition_integral_finite}
	\end{equation}
\end{assumption}

Under Assumption~\ref{ass:derivative_integrable},  we can bound $\mathbb{E}_{P}[D^{s}K_{x,h}^{2}]$ in terms of the volume dimension $d_{\mathrm{vol}}$ as follows.

\begin{lemma}\label{lem:derivative_kernel_bound_two} Let $(\mathbb{R}^{d},P)$
	be a probability space and let $X\sim P$. For any kernel $K$ satisfying Assumption~\ref{ass:derivative_integrable},
	the expectation of the square of the derivative of the kernel is upper bounded as 
	\begin{equation}
	\mathbb{E}_{P}\left[\left(D^{s}K\left(\frac{x-X}{h}\right)\right)^{2}\right]\leq C_{s,P,K,\epsilon}h^{d_{\mathrm{vol}}-\epsilon},
	\label{eq:derivative_kernel_bound}
	\end{equation}
	for any $\epsilon \in(0,d_{\rm{vol}})$, where $C_{s,P,K,\epsilon}$ is a constant depending only on $s$, $P$, $K$, $\epsilon$. Further, if $d_{\rm{vol}}=0$ or under Assumption~\ref{ass:dimension_exact}, $\epsilon$ can be $0$ in \eqref{eq:derivative_kernel_bound}.
\end{lemma}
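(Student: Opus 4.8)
The plan is to run exactly the argument used for Lemma~\ref{lem:kde_kernel_bound_two}, but with the bounded function $D^{s}K$ playing the role of $K$ and with the exponent $k$ fixed to $2$. The first thing I would record is that this substitution is legitimate: Assumption~\ref{ass:derivative_leibniz} guarantees $\left\Vert D^{s}K\right\Vert_{\infty}<\infty$, and Assumption~\ref{ass:derivative_integrable} is literally the integrability condition \eqref{eq:kde_condition_integral_finite} of Assumption~\ref{ass:integrable} written for $D^{s}K$ and $k=2$ (the proof of Lemma~\ref{lem:kde_kernel_bound_two} uses only boundedness and this integrability, not that $K$ be a probability density). So when $d_{\mathrm{vol}}>0$ the hypotheses of Lemma~\ref{lem:kde_kernel_bound_two} are met by $D^{s}K$, and when $d_{\mathrm{vol}}=0$ there is nothing to prove beyond the trivial bound $\mathbb{E}_{P}[(D^{s}K((x-X)/h))^{2}]\le\left\Vert D^{s}K\right\Vert_{\infty}^{2}$, which is \eqref{eq:derivative_kernel_bound} with $\epsilon=0$.

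Concretely, for $d_{\mathrm{vol}}>0$ I would set $G(t):=\sup_{\left\Vert y\right\Vert\ge t}(D^{s}K)^{2}(y)$, a non-increasing nonnegative function with $G(0^{+})\le\left\Vert D^{s}K\right\Vert_{\infty}^{2}<\infty$, and note that monotonicity together with $\int_{0}^{\infty}t^{d_{\mathrm{vol}}-1}G(t)\,dt<\infty$ forces $G(t)\to0$ as $t\to\infty$. Since $(D^{s}K)^{2}(y)\le G(\left\Vert y\right\Vert)$, a layer-cake / distribution-function computation combined with Tonelli's theorem rewrites $\mathbb{E}_{P}[(D^{s}K((x-X)/h))^{2}]$ as at most $\int_{[0,\infty)}P(\mathbb{B}_{\mathbb{R}^{d}}(x,ht))\,d(-G)(t)$. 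Then I would bound the probability of the ball: for any $\epsilon\in(0,d_{\mathrm{vol}})$, Lemma~\ref{lem:dimension_volume_lower} with $\nu=d_{\mathrm{vol}}-\epsilon$ gives $P(\mathbb{B}_{\mathbb{R}^{d}}(x,ht))\le C_{\nu,P}(ht)^{\nu}$, whereas under Assumption~\ref{ass:dimension_exact} the same holds with $\nu=d_{\mathrm{vol}}$ (small radii from the assumption, large radii from $P\le1$). Substituting and integrating by parts turns the integral into $h^{\nu}\,\nu\int_{0}^{\infty}t^{\nu-1}G(t)\,dt$, with no boundary contribution because $t^{\nu}G(t)\to0$ at both ends; and this last integral is finite because $G$ is bounded near $0$ with $\nu>0$, while $t^{\nu-1}\le t^{d_{\mathrm{vol}}-1}$ for $t\ge1$ so that Assumption~\ref{ass:derivative_integrable} controls the tail. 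Collecting the constants into $C_{s,P,K,\epsilon}$ yields \eqref{eq:derivative_kernel_bound}, with $\epsilon=0$ permitted exactly when $d_{\mathrm{vol}}=0$ or Assumption~\ref{ass:dimension_exact} holds.

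I do not expect a genuine obstacle, since the statement is in effect a corollary of Lemma~\ref{lem:kde_kernel_bound_two}; the only delicate points are the measure-theoretic justification of the layer-cake/Tonelli identity for the non-increasing $G$ and its Lebesgue--Stieltjes measure $-G$, the vanishing of the boundary terms in the integration by parts, and the extension of the critical bound $P(\mathbb{B}_{\mathbb{R}^{d}}(x,r))\le Cr^{d_{\mathrm{vol}}}$ from small $r$ to all $r$ under Assumption~\ref{ass:dimension_exact} --- all of which already appear in the proof of Lemma~\ref{lem:kde_kernel_bound_two}.
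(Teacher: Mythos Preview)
Your proposal is correct and takes essentially the same approach as the paper, which also reruns the proof of Lemma~\ref{lem:kde_kernel_bound_two} verbatim with $D^{s}K$ substituted for $K$ and $k=2$. The only cosmetic difference is that the paper introduces a continuous strictly decreasing majorant $\tilde{K}_{\eta}$ of your $G$ (and lets $\eta\to 0$ at the end) to sidestep the measure-theoretic technicalities of working directly with the Lebesgue--Stieltjes measure $d(-G)$, but the underlying layer-cake plus integration-by-parts argument is identical.
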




To apply the VC type bound on  \eqref{eq:derivative_supremum_function_class},
the function class $\mathcal{F}{}_{K,[l_{n},\infty)}^{s}$ should
be not too complex. Like in Section~\ref{sec:kde}, we
assume that $\mathcal{F}{}_{K,[l_{n},\infty)}^{s}$ is a uniformly
bounded VC-class.

\begin{assumption} \label{ass:derivative_vc} 
	Let $K:\mathbb{R}^{d}\to\mathbb{R}$
	be a kernel function with $\left\Vert D^{s}K\right\Vert _{\infty},\left\Vert D^{s}K\right\Vert _{2}<\infty$. We assume that 
	$$
	\mathcal{F}{}_{K,[l_{n},\infty)}^{s} :=\left\{ D^{s}K_{x,h}:\,x\in\mathbb{X},h\geq l_{n}\right\}
	$$
	is a uniformly bounded VC-class with dimension $\nu$, i.e.
	there exists positive numbers $A$ and $\nu$ such that, for every probability
	measure $Q$ on $\mathbb{R}^{d}$ and for every $\epsilon\in(0,\left\Vert D^s K\right\Vert _{\infty})$,
	the covering numbers $\mathcal{N}(\mathcal{F}^s_{K,[l_{n},\infty)},L_{2}(Q),\epsilon)$
	satisfies
	\[
	\mathcal{N}({\mathcal{F}_{K,[l_{n},\infty)}^s},L_{2}(Q),\epsilon)\leq\left(\frac{A\left\Vert D^{s}K\right\Vert _{\infty}}{\epsilon}\right)^{\nu}.
	\]

\end{assumption}

Finally, to bound $\sup_{h\geq l_{n},x\in\mathbb{X}}\left|D^{s}\hat{p}_{h}(x)-D^{s}p_{h}(x)\right|$
with high probability, we combine the Talagrand inequality and VC type
bound with Lemma~\ref{lem:derivative_kernel_bound_two}. The
following theorem provides a high probability upper bound for \eqref{eq:derivative_supremum_function_class},
and is analogous to Theorem~\ref{thm:kde_band_ray_uniform}.

\begin{theorem} \label{thm:derivative_band_ray_uniform}
	
	Let $P$ be a distribution and $K$ be a kernel function satisfying
	Assumption~\ref{ass:derivative_leibniz},~\ref{ass:derivative_integrable}, and~\ref{ass:derivative_vc}. Then, with probability at least $1-\delta$,
	\begin{align}
	& \sup_{h\geq l_{n},x\in\mathbb{X}}\left|D^{s}\hat{p}_{h}(x)-D^{s}p_{h}(x)\right|\nonumber\\
	& \leq C\left(\frac{\left(\log\left(1 / l_{n}\right)\right)_{+}}{nl_{n}^{d+|s|}}+\sqrt{\frac{\left(\log\left(1 / l_{n}\right)\right)_{+}}{nl_{n}^{2d+2|s|-d_{\mathrm{vol}}+\epsilon}}}
	+\sqrt{\frac{\log\left(2 / \delta\right)}{nl_{n}^{2d+2|s|-d_{\mathrm{vol}}+\epsilon}}}+\frac{\log\left(2 / \delta\right)}{nl_{n}^{d+|s|}}\right),\label{eq:derivative_band_ray_uniform_bound}
	\end{align}
	for any $\epsilon\in(0,d_{\rm{vol}})$, where $C$ is a constant depending only on $A$, $\left\Vert D^{s}K\right\Vert _{\infty}$, $d$,
	$\nu$, $d_{\mathrm{vol}}$, $C_{s,P,K,\epsilon}$, $\epsilon$. Further, if $d_{\rm{vol}}=0$ or under Assumption~\ref{ass:dimension_exact}, $\epsilon$ can be $0$ in \eqref{eq:derivative_band_ray_uniform_bound}.
	
\end{theorem}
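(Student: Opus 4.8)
The plan is to mimic the proof of Theorem~\ref{thm:kde_band_ray_uniform} step by step, with $K$ replaced by $D^s K$ and the normalizing power $h^d$ replaced by $h^{d+|s|}$ throughout. First, using the decomposition in \eqref{eq:derivative_supremum_function_class}, I would center the function class: set $\bar f := f - \mathbb{E}_P[f]$ for $f = \frac{1}{h^{d+|s|}} D^s K_{x,h}$, so that we are bounding $Z := \sup_{f \in \tilde{\mathcal{F}}^s_{K,[l_n,\infty)}} |\frac1n \sum_i \bar f(X_i)|$. For each fixed $f$, we have $\|\bar f\|_\infty \le 2 \|D^s K\|_\infty / l_n^{d+|s|} =: B$, and by Lemma~\ref{lem:derivative_kernel_bound_two},
\[
\mathbb{E}_P[\bar f^2] \le \mathbb{E}_P[f^2] = \frac{1}{h^{2(d+|s|)}}\mathbb{E}_P[(D^s K_{x,h})^2] \le \frac{C_{s,P,K,\epsilon}\, h^{d_{\mathrm{vol}}-\epsilon}}{h^{2(d+|s|)}} \le C_{s,P,K,\epsilon}\, l_n^{d_{\mathrm{vol}}-\epsilon-2(d+|s|)} =: \sigma^2,
\]
since $h \ge l_n$ and the exponent $d_{\mathrm{vol}}-\epsilon-2(d+|s|)$ is negative (so the bound is monotone in $h$ on $[l_n,\infty)$). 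Here one must be slightly careful: the bound $\mathbb{E}_P[(D^s K_{x,h})^2] \le C h^{d_{\mathrm{vol}}-\epsilon}$ should be checked to hold uniformly over $h$, but for large $h$ the trivial bound $\mathbb{E}_P[(D^s K_{x,h})^2]\le\|D^s K\|_\infty^2$ combined with $h^{d_{\mathrm{vol}}-\epsilon}$ being the dominant contribution after dividing by $h^{2(d+|s|)}$ handles it; the worst case is $h = l_n$.

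Next, I would bound $\mathbb{E}_P[Z]$ via Proposition~\ref{prop:function_vc_bound}. Assumption~\ref{ass:derivative_vc} gives that $\mathcal{F}^s_{K,[l_n,\infty)}$ is a uniformly bounded VC-class with dimension $\nu$, envelope $\|D^s K\|_\infty$, and covering-number constant $A$; rescaling the index class $\tilde{\mathcal{F}}^s$ by the fixed factor $1/h^{d+|s|}$ (which lies in $[?, l_n^{-(d+|s|)}]$) inflates the envelope to $B$ and the constant $A$ to $A' \asymp A\, l_n^{-(d+|s|)}$ at worst, but crucially preserves the VC dimension $\nu$ — I would spell out that scaling each function by a location-dependent constant in a bounded range only changes metric-entropy constants, not the VC dimension. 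Then Proposition~\ref{prop:function_vc_bound} yields
\[
\mathbb{E}_P[Z] \le C\left( \frac{\nu B}{n}\log\frac{A'B}{\sigma} + \sqrt{\frac{\nu \sigma^2}{n}\log\frac{A'B}{\sigma}}\right).
\]
Substituting the values of $B$, $\sigma$, $A'$ and simplifying $\log(A'B/\sigma)$ — which is $O(\log(1/l_n))$ up to additive constants since all of $A'$, $B$, $1/\sigma$ are polynomial in $1/l_n$ — produces the two "$\log(1/l_n)$" terms $\frac{(\log(1/l_n))_+}{n l_n^{d+|s|}}$ and $\sqrt{\frac{(\log(1/l_n))_+}{n l_n^{2d+2|s|-d_{\mathrm{vol}}+\epsilon}}}$. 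Here I would use $(\cdot)_+$ rather than bare $\log$ to handle $l_n \ge 1$, matching the theorem statement.

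Finally, I would apply Talagrand's inequality (Proposition~\ref{prop:function_talagrand}) to the centered, separable class with the bounds $B$ and $\sigma$ above: with probability $\ge 1-\delta$,
\[
Z \le \mathbb{E}_P[Z] + \sqrt{\tfrac{2}{n}\log\tfrac1\delta\,(\sigma^2 + 2B\,\mathbb{E}_P[Z])} + \tfrac{2B}{3n}\log\tfrac1\delta.
\]
Using $\sqrt{a+b}\le\sqrt a+\sqrt b$, the cross term splits into a $\sqrt{\sigma^2 \log(1/\delta)/n}$ piece — giving $\sqrt{\log(2/\delta)/(n l_n^{2d+2|s|-d_{\mathrm{vol}}+\epsilon})}$ — and a $\sqrt{B\,\mathbb{E}_P[Z]\log(1/\delta)/n}$ piece which, after plugging in $\mathbb{E}_P[Z]$ and applying $\sqrt{uv}\le u+v$ (AM–GM), is absorbed into the four terms already present (it contributes subdominant combinations of $\log(1/l_n)$ and $\log(1/\delta)$ over powers of $l_n$). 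The last Talagrand term gives $\frac{\log(2/\delta)}{n l_n^{d+|s|}}$. Collecting everything and renaming constants yields \eqref{eq:derivative_band_ray_uniform_bound}. The $\epsilon=0$ refinement is immediate: under Assumption~\ref{ass:dimension_exact} (or $d_{\mathrm{vol}}=0$), Lemma~\ref{lem:derivative_kernel_bound_two} holds with $\epsilon=0$, so every occurrence of $\epsilon$ above can be set to zero. I expect the only real obstacle to be the bookkeeping around the VC/covering constants: verifying rigorously that rescaling $D^s K_{x,h}$ by $h^{-(d+|s|)}$ keeps the class a uniformly bounded VC-class with the same $\nu$ and only a polynomially-in-$1/l_n$ worse constant, and that this extra constant contributes only inside the logarithm. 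Everything else is the same constant-chasing as in Theorem~\ref{thm:kde_band_ray_uniform}.
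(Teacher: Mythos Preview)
Your proposal is correct and follows exactly the paper's approach: apply Theorem~\ref{thm:function_uniform} (Talagrand plus the Gin\'e--Guillou VC bound) with $B = l_n^{-(d+|s|)}\|D^sK\|_\infty$ and $\sigma^2$ coming from Lemma~\ref{lem:derivative_kernel_bound_two}. The one step you flag but do not carry out --- the covering-number bound for the \emph{normalized} class $\tilde{\mathcal{F}}^s_{K,[l_n,\infty)}$ --- is handled in the paper by an explicit two-level covering (cover the bandwidth interval $[l_n,(\eta/2\|D^sK\|_\infty)^{-1/(d+|s|)}]$ and cover $\mathcal{F}^s_{K,[l_n,\infty)}$ in $L_2(Q)$ separately, then take products plus one extra ball for large $h$), which yields covering-number exponent $\nu+2$ rather than your conjectured $\nu$; since this exponent only enters as a multiplicative constant in front of the logarithm, your sketch is unaffected.
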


When $l_{n}$ is not going to $0$ too fast, then $\sqrt{\frac{\log(1/l_{n})}{nl_{n}^{2d+2|s|-d_{\mathrm{vol}}}}}$
term dominates the upper bound in \eqref{eq:derivative_band_ray_uniform_bound}
as follows.

\begin{corollary}
	
	\label{cor:derivative_band_ray_uniform_probconv}
	
	Let $P$ be a distribution and $K$ be a kernel function satisfying
	Assumption~\ref{ass:derivative_leibniz},~\ref{ass:derivative_integrable}, and~\ref{ass:derivative_vc}.
	Suppose 
	\[
	\limsup_{n}\frac{\left(\log\left(1 / l_{n}\right)\right)_{+}+\log\left(2 / \delta\right)}{nl_{n}^{d_{\mathrm{vol}}-\epsilon}}<\infty,
	\]
	for fixed $\epsilon\in(0,d_{\rm{vol}})$. Then, with probability at least $1-\delta$,
	\begin{equation}
	\sup_{h\geq l_{n},x\in\mathbb{X}}\left|D^{s}\hat{p}_{h}(x)-D^{s}p_{h}(x)\right|
	\leq C'\sqrt{\frac{\left(\log\left(1 / l_{n}\right)\right)_{+}+\log\left(2 / \delta\right)}{nl_{n}^{2d+2|s|-d_{\mathrm{vol}}+\epsilon}}},
	\label{eq:derivative_band_ray_uniform_probconv}
	\end{equation}
	where $C'$ is a constant depending only on $A$, $\left\Vert D^{s}K\right\Vert _{\infty}$, $d$,
	$\nu$, $d_{\mathrm{vol}}$, $C_{s,P,K,\epsilon}$, $\epsilon$. Further, if $d_{\rm{vol}}=0$ or under Assumption~\ref{ass:dimension_exact}, $\epsilon$ can be $0$.
	
\end{corollary}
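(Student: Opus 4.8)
The plan is to deduce the corollary from Theorem~\ref{thm:derivative_band_ray_uniform} by a routine domination argument: under the growth hypothesis on $\{l_{n}\}$, the first and fourth terms on the right-hand side of \eqref{eq:derivative_band_ray_uniform_bound} get absorbed into the second and third terms, and the two surviving terms then combine into the single square root of \eqref{eq:derivative_band_ray_uniform_probconv}. First I would abbreviate
\[
a_{n}:=\frac{\left(\log(1/l_{n})\right)_{+}+\log(2/\delta)}{n\,l_{n}^{\,d_{\mathrm{vol}}-\epsilon}},
\]
so that the hypothesis reads $M:=\limsup_{n}a_{n}<\infty$, and fix an index $N$ with $a_{n}\le M+1$ for all $n\ge N$.

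Next I would record the elementary identity for the exponent of $l_{n}$, namely $\tfrac12\!\left(2d+2|s|-d_{\mathrm{vol}}+\epsilon\right)-(d+|s|)=-\tfrac12\!\left(d_{\mathrm{vol}}-\epsilon\right)$, which yields the factorizations
\[
\frac{\left(\log(1/l_{n})\right)_{+}}{n\,l_{n}^{\,d+|s|}}
=\sqrt{\frac{\left(\log(1/l_{n})\right)_{+}}{n\,l_{n}^{\,d_{\mathrm{vol}}-\epsilon}}}\;\cdot\;\sqrt{\frac{\left(\log(1/l_{n})\right)_{+}}{n\,l_{n}^{\,2d+2|s|-d_{\mathrm{vol}}+\epsilon}}},
\]
and the analogous one with $\log(2/\delta)$ in place of $\left(\log(1/l_{n})\right)_{+}$. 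Since $\delta\in(0,1)$, each of the two leading square-root factors is bounded by $\sqrt{a_{n}}\le\sqrt{M+1}$ once $n\ge N$; hence for such $n$ the first term of \eqref{eq:derivative_band_ray_uniform_bound} is at most $\sqrt{M+1}$ times its second term, and its fourth term is at most $\sqrt{M+1}$ times its third term. Bounding each of the two remaining square roots by the combined quantity $\sqrt{\bigl(\left(\log(1/l_{n})\right)_{+}+\log(2/\delta)\bigr)/(n\,l_{n}^{\,2d+2|s|-d_{\mathrm{vol}}+\epsilon})}$ and collecting constants gives \eqref{eq:derivative_band_ray_uniform_probconv} for $n\ge N$ with $C':=2C\bigl(1+\sqrt{M+1}\bigr)$, where $C$ is the constant of Theorem~\ref{thm:derivative_band_ray_uniform}; the finitely many earlier indices are absorbed into $C'$ using the crude deterministic bound $\sup_{h\ge l_{n},x\in\mathbb{X}}\left|D^{s}\hat{p}_{h}(x)-D^{s}p_{h}(x)\right|\le 2\left\Vert D^{s}K\right\Vert_{\infty}l_{n}^{-(d+|s|)}$. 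The final sentence, allowing $\epsilon=0$, follows by running exactly the same computation on the $\epsilon=0$ form of Theorem~\ref{thm:derivative_band_ray_uniform}, which is available when $d_{\mathrm{vol}}=0$ or under Assumption~\ref{ass:dimension_exact}.

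I do not anticipate a genuine obstacle: the argument is pure bookkeeping of powers of $l_{n}$ together with the splitting of the numerator into its $\left(\log(1/l_{n})\right)_{+}$ and $\log(2/\delta)$ contributions. The only mildly delicate point is verifying that the exponents align so that the ratio of each ``bad'' term to the corresponding ``good'' term is exactly of size $\sqrt{a_{n}}$, which is precisely what makes the $\limsup$ condition the right hypothesis; everything else is a matter of tracking the constant, which comes out depending only on $C$, the listed parameters, and the value of $\limsup_{n}a_{n}$.
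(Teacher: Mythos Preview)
Your proposal is correct and follows essentially the same approach as the paper: both start from Theorem~\ref{thm:derivative_band_ray_uniform}, factor the first and fourth terms of \eqref{eq:derivative_band_ray_uniform_bound} as a product of the corresponding square-root term and $\sqrt{a_{n}}$, invoke the $\limsup$ hypothesis to bound $\sqrt{a_{n}}$ uniformly, and then merge the two surviving square roots. The only cosmetic difference is that the paper uses $\sup_{n}a_{n}<\infty$ directly (since each $a_{n}$ is finite, the finite $\limsup$ gives a finite supremum) rather than splitting into $n\ge N$ and absorbing the earlier indices separately.
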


We now turn to the case of a fixed  bandwidth $h_{n}>0$. We are interested in a high probability bound on 
\[
\sup_{x\in\mathbb{X}}\left|D^{s}\hat{p}_{h_{n}}(x)-D^{s}p_{h_{n}}(x)\right|.
\]
Of course, Theorem~\ref{thm:derivative_band_ray_uniform} and Corollary~\ref{cor:derivative_band_ray_uniform_probconv}
are applicable to the fixed bandwidth case.

But if the support of $P$ is bounded, then, for a $M_K$-Lipschitz continuous derivative of kernel density estimator and fixed bandwidth, we can again derive a uniform convergence bound without the finite VC condition of \citep{GineG2001, GineKZ2004}. 

\begin{lemma} \label{lem:derivative_vc}
	Suppose there exists $R>0$ with $\mathbb{X}\subset\mathbb{B}_{\mathbb{R}^{d}}(0,R)$.
	Also, suppose that $D^{s}K$ is $M_{K}$-Lipschitz, i.e. 
	$$
	\left\Vert D^{s}K(x)-D^{s}K(y)\right\Vert _{2}\leq M_{K}\left\Vert x-y\right\Vert _{2}.
	$$
	Then for all $\eta\in\left(0,\left\Vert D^{s}K\right\Vert _{\infty}\right)$,
	the supremum of the $\eta$-covering number $\mathcal{N}(\mathcal{F}_{K,h}^{s},L_{2}(Q),\eta)$
	over all measure $Q$ is upper bounded as 
	\[
	\sup_{Q}\mathcal{N}(\mathcal{F}_{K,h}^{s},L_{2}(Q),\eta)\leq\left(\frac{2RM_{K}h^{-1}+\left\Vert D^{s}K\right\Vert _{\infty}}{\eta}\right)^{d}.
	\]
\end{lemma}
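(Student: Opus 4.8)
The plan is to mirror the proof of Lemma~\ref{lem:kde_vc} (which is the $s=0$ case): for a fixed bandwidth $h$, transfer the covering problem for the function class $\mathcal{F}_{K,h}^{s}=\{D^{s}K_{x,h}:x\in\mathbb{X}\}$ onto a covering problem for the index set $\mathbb{X}\subset\mathbb{B}_{\mathbb{R}^{d}}(0,R)$ in the Euclidean metric, exploiting that the parametrization $x\mapsto D^{s}K_{x,h}$ is Lipschitz from $(\mathbb{X},\|\cdot\|)$ into $(\mathcal{F}_{K,h}^{s},L_{2}(Q))$ uniformly over $Q$.

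First I would establish the pointwise Lipschitz estimate. For any $x,y\in\mathbb{X}$ and any $z\in\mathbb{R}^{d}$, writing $D^{s}K_{x,h}(z)=(D^{s}K)\!\left(\tfrac{x-z}{h}\right)$ and using that $D^{s}K$ is $M_{K}$-Lipschitz,
\[
\left|D^{s}K_{x,h}(z)-D^{s}K_{y,h}(z)\right|\le M_{K}\left\|\tfrac{x-z}{h}-\tfrac{y-z}{h}\right\|=\frac{M_{K}}{h}\,\|x-y\|.
\]
Integrating the square of this bound against an arbitrary probability measure $Q$ gives $\|D^{s}K_{x,h}-D^{s}K_{y,h}\|_{L_{2}(Q)}\le (M_{K}/h)\|x-y\|$. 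Hence the image under $x\mapsto D^{s}K_{x,h}$ of any internal $\varepsilon$-net of $\mathbb{X}$ (centers in $\mathbb{X}$) is an $(M_{K}\varepsilon/h)$-net of $\mathcal{F}_{K,h}^{s}$ in $L_{2}(Q)$ with centers in $\mathcal{F}_{K,h}^{s}$, so that
\[
\sup_{Q}\mathcal{N}(\mathcal{F}_{K,h}^{s},L_{2}(Q),\eta)\le \mathcal{N}\!\left(\mathbb{X},\|\cdot\|,\frac{\eta h}{M_{K}}\right).
\]

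Finally I would bound $\mathcal{N}(\mathbb{X},\|\cdot\|,\varepsilon)$ by a maximal $\varepsilon$-packing of $\mathbb{X}$: such a packing is automatically an $\varepsilon$-cover of $\mathbb{X}$ with centers in $\mathbb{X}$, and since $\mathbb{X}\subseteq\mathbb{B}_{\mathbb{R}^{d}}(0,R)$ the open balls of radius $\varepsilon/2$ about its points are disjoint and contained in $\mathbb{B}_{\mathbb{R}^{d}}(0,R+\varepsilon/2)$; comparing volumes yields at most $(1+2R/\varepsilon)^{d}$ points. Taking $\varepsilon=\eta h/M_{K}$ gives the bound $\bigl(1+2RM_{K}h^{-1}/\eta\bigr)^{d}$, and since $\eta<\|D^{s}K\|_{\infty}$ we have $1\le \|D^{s}K\|_{\infty}/\eta$, which replaces the additive $1$ by $\|D^{s}K\|_{\infty}/\eta$ and produces exactly $\bigl((2RM_{K}h^{-1}+\|D^{s}K\|_{\infty})/\eta\bigr)^{d}$. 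The only mildly delicate point is the bookkeeping that keeps the cover internal (centers in the set) so that the paper's definition of covering number applies, together with arranging the additive constant in the ball-covering estimate to line up with the stated $\|D^{s}K\|_{\infty}/\eta$ term; everything else is routine and identical to the argument for Lemma~\ref{lem:kde_vc}.
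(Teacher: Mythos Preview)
Your proposal is correct and follows essentially the same route as the paper: transfer the $L_{2}(Q)$ covering of $\mathcal{F}_{K,h}^{s}$ to a Euclidean covering of $\mathbb{X}$ via the Lipschitz bound $\|D^{s}K_{x,h}-D^{s}K_{y,h}\|_{L_{2}(Q)}\le (M_{K}/h)\|x-y\|$, then control $\mathcal{N}(\mathbb{X},\|\cdot\|,\varepsilon)$ by a standard packing/volume argument and absorb the additive $1$ using $\eta<\|D^{s}K\|_{\infty}$. The only cosmetic difference is that the paper packs the enclosing ball $\mathbb{B}_{\mathbb{R}^{d}}(0,R)$ and passes through the covering--packing inequality at half radius, whereas you pack $\mathbb{X}$ directly; your version has the minor advantage of keeping the centers in $\mathbb{X}$, which aligns more cleanly with the paper's internal-cover definition of covering numbers.
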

\begin{corollary}
	
	\label{cor:derivative_uniform_band_one_probconv}
	Suppose there exists $R>0$ with $\mathrm{supp}(P) = \mathbb{X}\subset\mathbb{B}_{\mathbb{R}^{d}}(0,R)$. Let $K$ be a kernel function with $M_K$-Lipschitz continuous derivative satisfying Assumption~\ref{ass:derivative_integrable}.
	If
	\[
	\limsup_{n}\frac{\left(\log\left(1 / h_{n}\right)\right)_{+}+\log\left(2 / \delta\right)}{nh_{n}^{d_{\mathrm{vol}}-\epsilon}}<\infty,
	\]
	for fixed $\epsilon\in(0,d_{\rm{vol}})$. Then, with probability at least $1-\delta$, 
	\begin{equation}
	\sup_{x\in\mathbb{X}}\left|D^{s}\hat{p}_{h}(x)-D^{s}p_{h}(x)\right|
	\leq C''\sqrt{\frac{(\log(\frac{1}{h_{n}}))_{+}+\log(\frac{2}{\delta})}{nh_{n}^{2d+2|s|-d_{\mathrm{vol}+\epsilon}}}},
	\label{eq:derivative_uniform_band_one_probconv}
	\end{equation}
	where $C''$ is a constant depending only on $A$, $\left\Vert D^{s}K\right\Vert _{\infty}$, $d$,
	$M_k$, $d_{\mathrm{vol}}$, $C_{s,P,K,\epsilon}$, $\epsilon$.
	Further, if $d_{\rm{vol}}=0$ or under Assumption~\ref{ass:dimension_exact}, $\epsilon$ can be $0$.
	
\end{corollary}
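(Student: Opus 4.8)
The plan is to follow the proof of Theorem~\ref{thm:derivative_band_ray_uniform} almost verbatim, the single modification being that the uniformly bounded VC-class Assumption~\ref{ass:derivative_vc} is replaced by the explicit covering-number estimate of Lemma~\ref{lem:derivative_vc}, which is available for free once $\mathbb{X}$ is bounded and $D^{s}K$ is Lipschitz. Concretely, by the fixed-bandwidth analogue of \eqref{eq:derivative_supremum_function_class} one writes
\[
\sup_{x\in\mathbb{X}}\bigl|D^{s}\hat{p}_{h_{n}}(x)-D^{s}p_{h_{n}}(x)\bigr|
=\sup_{f\in\tilde{\mathcal{F}}^{s}_{K,h_{n}}}\Bigl|\frac{1}{n}\sum_{i=1}^{n}f(X_{i})-\mathbb{E}_{P}[f(X)]\Bigr|,
\qquad
\tilde{\mathcal{F}}^{s}_{K,h_{n}}:=\Bigl\{\tfrac{1}{h_{n}^{d+|s|}}D^{s}K_{x,h_{n}}:x\in\mathbb{X}\Bigr\},
\]
and then applies Talagrand's inequality (Proposition~\ref{prop:function_talagrand}) to the centered version of this class. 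This requires an envelope bound $B$, a variance proxy $\sigma^{2}$, and a bound on the expected supremum.

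For the envelope, $\|D^{s}K_{x,h_{n}}/h_{n}^{d+|s|}\|_{\infty}\le\|D^{s}K\|_{\infty}h_{n}^{-(d+|s|)}=:B$. For the variance proxy, Lemma~\ref{lem:derivative_kernel_bound_two} gives $\mathbb{E}_{P}[(D^{s}K_{x,h_{n}})^{2}]\le C_{s,P,K,\epsilon}\,h_{n}^{d_{\mathrm{vol}}-\epsilon}$, so dividing by $h_{n}^{2(d+|s|)}$ yields $\sigma^{2}:=C_{s,P,K,\epsilon}\,h_{n}^{-(2d+2|s|-d_{\mathrm{vol}}+\epsilon)}$ as an upper bound for $\mathbb{E}_{P}[f^{2}]$, with $\epsilon=0$ admissible when $d_{\mathrm{vol}}=0$ or Assumption~\ref{ass:dimension_exact} holds. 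For the expected supremum I would feed Proposition~\ref{prop:function_vc_bound} the covering numbers of $\tilde{\mathcal{F}}^{s}_{K,h_{n}}$: scaling a function class by $h_{n}^{-(d+|s|)}$ scales every $L_{2}(Q)$ distance by the same factor, so Lemma~\ref{lem:derivative_vc} transfers to
\[
\sup_{Q}\mathcal{N}\bigl(\tilde{\mathcal{F}}^{s}_{K,h_{n}},L_{2}(Q),\eta\bigr)
\le\Bigl(\frac{(2RM_{K}h_{n}^{-1}+\|D^{s}K\|_{\infty})\,h_{n}^{-(d+|s|)}}{\eta}\Bigr)^{d},
\]
exhibiting $\tilde{\mathcal{F}}^{s}_{K,h_{n}}$ as a uniformly bounded VC-class of dimension $\nu=d$, envelope $B$, and constant $A=1+2RM_{K}/(h_{n}\|D^{s}K\|_{\infty})$. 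Proposition~\ref{prop:function_vc_bound} then bounds the expected supremum by $C\bigl(\tfrac{dB}{n}\log\tfrac{AB}{\sigma}+\sqrt{\tfrac{d\sigma^{2}}{n}\log\tfrac{AB}{\sigma}}\bigr)$, and the key point is that $AB/\sigma$ is a power of $h_{n}^{-1}$ up to constants, so $\log(AB/\sigma)=O\bigl((\log(1/h_{n}))_{+}\bigr)$.

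Inserting $B$, $\sigma^{2}$, and this expected-supremum bound into Talagrand's inequality exactly as in the proof of Theorem~\ref{thm:derivative_band_ray_uniform} yields, with probability at least $1-\delta$,
\[
\sup_{x\in\mathbb{X}}\bigl|D^{s}\hat{p}_{h_{n}}(x)-D^{s}p_{h_{n}}(x)\bigr|
\le C\Bigl(\frac{(\log(1/h_{n}))_{+}}{nh_{n}^{d+|s|}}+\sqrt{\frac{(\log(1/h_{n}))_{+}}{nh_{n}^{2d+2|s|-d_{\mathrm{vol}}+\epsilon}}}+\sqrt{\frac{\log(2/\delta)}{nh_{n}^{2d+2|s|-d_{\mathrm{vol}}+\epsilon}}}+\frac{\log(2/\delta)}{nh_{n}^{d+|s|}}\Bigr).
\]
Under the hypothesis $\limsup_{n}\frac{(\log(1/h_{n}))_{+}+\log(2/\delta)}{nh_{n}^{d_{\mathrm{vol}}-\epsilon}}<\infty$, comparing the four terms (the square of the first divided by the square of the second equals $\frac{(\log(1/h_{n}))_{+}}{nh_{n}^{d_{\mathrm{vol}}-\epsilon}}$, and similarly for the $\delta$-terms) shows the two square-root terms dominate, which collapses the bound to \eqref{eq:derivative_uniform_band_one_probconv}.

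I expect the only delicate point to be the bookkeeping in the penultimate step: verifying that every polynomial-in-$h_{n}^{-1}$ factor entering through $A$, $B$, and $\sigma$ contributes only the logarithmic factor $(\log(1/h_{n}))_{+}$ rather than an extra power of $h_{n}^{-1}$, and that all constants can be absorbed into a single $C''$ depending only on $R$, $M_{K}$, $\|D^{s}K\|_{\infty}$, $d$, $d_{\mathrm{vol}}$, $C_{s,P,K,\epsilon}$, and $\epsilon$. Everything else is a transcription of the fixed-bandwidth argument behind Corollary~\ref{cor:kde_uniform_band_one_probconv}, with the kernel replaced by its $D^{s}$-derivative throughout.
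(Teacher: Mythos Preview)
Your proposal is correct and follows essentially the same approach as the paper. The paper packages Propositions~\ref{prop:function_talagrand} and~\ref{prop:function_vc_bound} into a single Theorem~\ref{thm:function_uniform} and applies it with the covering-number bound from Lemma~\ref{lem:derivative_vc} and the variance bound from Lemma~\ref{lem:derivative_kernel_bound_two}, arriving at the same four-term inequality you derive; the final simplification under the $\limsup$ hypothesis is exactly the comparison of terms you describe (done explicitly in the paper's proof of Corollary~\ref{cor:kde_band_ray_uniform_probconv}).
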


\bibliographystyle{plainnat}
\bibliography{myref}

\appendix

\clearpage
\newpage

\section*{\centerline{SUPPLEMENTARY MATERIAL}}

\section{Backgrounds and Basic Definitions}
\label{sec:definition}

First, we define the Hausdorff measure (\citep[Section 6]{Pesin1997}, \citep[Section 2.2]{Falconer2014}), which is a generalization of the Lebesgue
measure to lower dimensional subsets of $\mathbb{R}^{d}$. For a subset $A\subset\mathbb{R}^{d}$,
we let ${\rm diam}(A)$ be its diameter, that is 
\[
{\rm diam}(A)=\sup\{\left\Vert x-y\right\Vert :\,x,y\in A\}.
\]

\begin{definition}

Fix $\nu>0$ and $\delta>0$. For any set $A\subset\mathbb{R}^{d}$,
define $H_{\delta}^{\nu}$ be 
\[
H_{\delta}^{\nu}(A):=\inf\left\{ \sum_{i=1}^{\infty}({\rm diam}U_{i})^{\nu}:\,A\subset\bigcup_{i=1}^{\infty}U_{i}\text{ and }{\rm diam}(U_{i})<\delta\right\} ,
\]
where the infimum is over all countable covers of $A$ by sets $U_{i}\subset\mathbb{R}^{d}$
satisfying ${\rm diam}(U_{i})<\delta$. Then, let the \emph{$\nu$-dimensional
Hausdorff measure} $H^{\nu}$ be 
\[
H^{\nu}(A):=\lim_{\delta\to0}H_{\delta}^{\nu}(A).
\]

\end{definition}

Then, the Hausdorff dimension of a set is the infimum over dimensions that make the Hausdorff measure on that set to be $0$.
\begin{definition}
	\label{def:definition_hausdorff_dimension_set}
	
	For any set $A\subset\mathbb{R}^{d}$, its \emph{Hausdorff dimension} $d_{H}(A)$ is 
	\[
	d_{H}(A):=\inf\left\{ \nu:\,H^{\nu}(A)=0\right\} .
	\]
	
\end{definition}

We use the normalized $\nu$-dimensional Hausdorff measure so that
when $\nu$ is an integer, its measure on $\nu$-dimensional unit
cube is $1$. This can be done by defining the normalized $\nu$-dimensional
Hausdorff measure $\lambda_{\nu}$ as 
\[
\lambda_{\nu}=\frac{\pi^{\frac{\nu}{2}}}{2^{\nu}\Gamma(\frac{\nu}{2}+1)}H^{\nu}.
\]

Now, we define the reach, which is a regularity parameter in geometric measure theory. Given a closed subset $A\subset\mathbb{R}^{d}$,
the medial axis of $A$, denoted by ${\rm Med}(A)$, is the subset
of $\mathbb{R}^{d}$ composed of the points that have at least two
nearest neighbors on $A$. Namely, denoting by $d(x,A)=\inf_{q\in A}||q-x||$
the distance function of a generic point $x$  to $A$, 
\begin{equation}
{\rm Med}(A)=\left\{ x\in\mathbb{R}^{d} \setminus A |\exists q_{1}\neq q_{2}\in A,||q_{1}-x||=||q_{2}-x||=d(x,A)\right\} .\label{eq:definition_medialaxis}
\end{equation}
The reach of $A$ is then defined as the minimal distance from $A$ to ${\rm Med}(A)$. 
\begin{definition}
	The \emph{reach} of a closed subset $A\subset\mathbb{R}^{d}$ is defined
	as 
	\begin{align}
	\tau_{A}=\inf_{q\in A}d\left(q,{\rm Med}(A)\right) = \inf_{q\in A,x\in \mathrm{Med}(A)}||q-x||.\label{eq:definition_reach_medial_axis}
	\end{align}
\end{definition}

\section{Proof for Section \ref{sec:dimension}}

We show Lemma \ref{lem:dimension_volume_lower} first, which is a simple argument from the definition of $d_{{\rm vol}}$ in \eqref{eq:kde_condition_prob_upper} in Definition \ref{def:dimension_volume}.

\textbf{Lemma \ref{lem:dimension_volume_lower}.} \textit{
Let $P$ be a probability distribution on $\mathbb{R}^{d}$, and $d_{{\rm vol}}$
be its volume dimension. Then for any $\nu\in [0,d_{{\rm vol}})$, there
exists a constant $C_{\nu,P}$ depending only on $P$ and $\nu$ such
that for all $x\in\mathbb{X}$ and $r>0$, 
\[
\frac{P(\mathbb{B}_{\mathbb{R}^{d}}(x,r))}{r^{\nu}}\leq C_{\nu,P}.
\]
}

\begin{proof}[Proof of Lemma \ref{lem:dimension_volume_lower}]
	
	From the definition of $d_{{\rm vol}}$ in \eqref{eq:kde_condition_prob_upper} in Definition \ref{def:dimension_volume},
	$\nu\in[0,d_{{\rm vol}})$ implies that 
	\[
	\limsup_{r\to0}\sup_{x\in\mathbb{X}}\frac{P(\mathbb{B}_{\mathbb{R}^{d}}(x,r))}{r^{\nu}}<\infty.
	\]
	Then there exist $r_{0}>0$ and $C_{\nu,P}'>0$ such that for all
	$r\leq r_{0}$ and for all $x\in\mathbb{X}$, 
	\begin{equation}
	\frac{P(\mathbb{B}_{\mathbb{R}^{d}}(x,r))}{r^{\nu}}\leq C_{\nu,P}'. \label{eq:dimension_volume_lower_radius_small}
	\end{equation}
	And for all $r\geq r_{0}$ and for all $x\in\mathbb{X}$, 
	\begin{equation}
	\frac{P(\mathbb{B}_{\mathbb{R}^{d}}(x,r))}{r^{\nu}}\leq\frac{1}{r_{0}^{\nu}}. \label{eq:dimension_volume_lower_radius_all}
	\end{equation}
	Hence combining \eqref{eq:dimension_volume_lower_radius_small} and \eqref{eq:dimension_volume_lower_radius_all} gives that for all $r>0$ and for all $x\in\mathbb{X}$, 
	\[
	\frac{P(\mathbb{B}_{\mathbb{R}^{d}}(x,r))}{r^{\nu}}\leq\max\left\{ C_{\nu,P}',\frac{1}{r_{0}^{\nu}}\right\} .
	\]
	
\end{proof}

Then we can show Proposition \ref{prop:dimension_manifold} by using Lemma \ref{lem:dimension_volume_lower} and the definition of Hausdorff dimension in Definition \ref{def:definition_hausdorff_dimension_set}.

\textbf{Proposition \ref{prop:dimension_manifold}.} \textit{
	Let $P$ be a probability distribution on $\mathbb{R}^{d}$, and $d_{{\rm vol}}$
	be its volume dimension.
	Suppose there exists a set $A$ satisfying $P(A\cap\mathbb{X})>0$ and with Hausdorff dimension $d_{H}$. Then $0 \leq d_{{\rm vol}}\leq d_{H}$. Hence if $A$ is a $d_{M}$-dimensional manifold, then $0 \leq d_{{\rm vol}}\leq d_{M}$.
	In particular, for any probability distribution $P$ on $\mathbb{R}^{d}$,  $0 \leq d_{{\rm vol}}\leq d$.
	Also, if $P$ has a point mass, i.e. there exists $x\in\mathbb{X}$
	with $P(\{x\})>0$, then $d_{{\rm vol}} = 0$.
}

\begin{proof}[Proof of Proposition \ref{prop:dimension_manifold}]

We first show $d_{\rm{vol}}\geq 0$. For any $x\in\mathbb{X}$ and $r\geq0$, 
\[
\frac{P(\mathbb{B}_{\mathbb{R}^{d}}(x,r))}{r^{0}}\leq 1<\infty.
\]
Hence $d_{{\rm vol}}\geq 0$ holds.

Now we show $d_{{\rm vol}}\leq d_{H}=d_{H}(A)$. Fix any $\nu<d_{{\rm vol}}$,
and we will show that $H^{\nu}(A\cap\mathbb{X})>0$. Let $\left\{ U_{i}\right\} $
be a countable cover of $A\cap\mathbb{X}$, i.e. $A\cap\mathbb{X}\subset\bigcup_{i=1}^{\infty}U_{i}$,
and let $r_{i}={\rm diam}(U_{i})$. For each $i$, we can assume that
$U_{i}\cap(A\cap\mathbb{X})\neq\emptyset$ and choose $x_{i}\in U_{i}\cap(A\cap\mathbb{X})$.
Then $U_{i}\subset\overline{\mathbb{B}_{\mathbb{R}^{d}}(x_{i},r_{i})}\subset\mathbb{B}_{\mathbb{R}^{d}}(x_{i},2r_{i})$,
and hence 
\[
A\cap\mathbb{X}\subset\bigcup_{i=1}^{\infty}\mathbb{B}_{\mathbb{R}^{d}}(x_{i},2r_{i}).
\]
Then with $x_{i}\in\mathbb{X}$, applying \eqref{eq:dimension_volume_lower} from Lemma \ref{lem:dimension_volume_lower} gives 
\begin{align*}
P(A\cap\mathbb{X}) & <P\left(\bigcup_{i=1}^{\infty}\mathbb{B}_{\mathbb{R}^{d}}(x_{i},2r_{i})\right)=\sum_{i=1}^{\infty}P(\mathbb{B}_{\mathbb{R}^{d}}(x_{i},2r_{i}))\\
& \leq\sum_{i=1}^{\infty}2^{\nu}C_{\nu,P}r_{i}^{\nu}.
\end{align*}
Hence 
\[
\sum_{i=1}^{\infty}r_{i}^{\nu}\geq\frac{P(A\cap\mathbb{X})}{2^{\nu}C_{\nu,P}}>0.
\]
Since this holds for arbitrary covers of $A\cap\mathbb{X}$, $H_{\delta}^{\nu}(A\cap\mathbb{X})\geq\frac{P(A\cap\mathbb{X})}{2^{\nu}C_{\nu,P}}$
for all $\delta>0$. And $A\cap\mathbb{X}\subset A$ implies 
\[
H^{\nu}(A)\geq H^{\nu}(A\cap\mathbb{X})=\lim_{\delta\to0}H_{\delta}^{\nu}(A\cap\mathbb{X})\geq\frac{P(A\cap\mathbb{X})}{2^{\nu}C_{\nu,P}}>0.
\]
Since this holds for arbitrary $\nu<d_{{\rm vol}}$, the definition of Hausdorff dimension in Definition \ref{def:definition_hausdorff_dimension_set} gives that 
\[
d_{H}=\inf\left\{ \nu:\,H^{\nu}(A)=0\right\} \geq d_{{\rm vol}}.
\]

Now, if $A$ is a $d_{M}$-dimensional manifold, then the Hausdorff dimension of $A$ is $d_{M}$, and hence $0 \leq d_{{\rm vol}}\leq d_{M}$ holds. 
In particular, setting $A=\mathbb{R}^{d}$ gives $0 \leq d_{{\rm vol}}\leq d$ for all probability distributions. Also, if there exists $x\in\mathbb{X}$ with $P(\{x\})>0$, then setting $A=\{x\}$ gives $d_{{\rm vol}} = 0$.
\end{proof}

Proposition \ref{prop:mixture} is again a simple argument from the definition of $d_{{\rm vol}}$ in \eqref{eq:kde_condition_prob_upper} in Definition \ref{def:dimension_volume}.

\textbf{Proposition \ref{prop:mixture}.} \textit{
Let $P_{1},\ldots,P_{m}$ be probability distributions on $\mathbb{R}^{d}$,  and  $\lambda_{1},\ldots,\lambda_{m}\in(0,1)$ with $\sum_{i=1}^{m}\lambda_{i}=1$. Then 
\[
d_{{\rm vol}}\left(\sum_{i=1}^{m}\lambda_{i}P_{i}\right)=\min\left\{ d_{{\rm vol}}(P_{i}):\,1\leq i\leq m\right\} .
\]
In particular, when $d_{{\rm vol}}$ is understood as a real-valued function on the space of probability distributions, both its sublevel sets and superlevel sets are convex.
}

\begin{proof}[Proof of Proposition \ref{prop:mixture}]

It is enough to show for the case $m=2$. Let $P:=\lambda_{1}P_{1}+\lambda_{2}P_{2}$.

We first show $d_{{\rm vol}}(P)\geq\min\left\{ d_{{\rm vol}}(P_{1}),d_{{\rm vol}}(P_{2})\right\} $.
Fix $\nu<\min\left\{ d_{{\rm vol}}(P_{1}),d_{{\rm vol}}(P_{2})\right\} $,
then Definition \ref{def:dimension_volume} gives that
\[
\limsup_{r\to0}\sup_{x\in\mathbb{X}}\frac{P_{1}(\mathbb{B}_{\mathbb{R}^{d}}(x,r))}{r^{\nu}},\,\limsup_{r\to0}\sup_{x\in\mathbb{X}}\frac{P_{2}(\mathbb{B}_{\mathbb{R}^{d}}(x,r))}{r^{\nu}}<\infty.
\]
And hence 
\begin{align*}
\limsup_{r\to0}\sup_{x\in\mathbb{X}}\frac{P(\mathbb{B}_{\mathbb{R}^{d}}(x,r))}{r^{\nu}} & =\limsup_{r\to0}\sup_{x\in\mathbb{X}}\left\{ \frac{\lambda_{1}P_{1}(\mathbb{B}_{\mathbb{R}^{d}}(x,r))}{r^{\nu}}+\frac{\lambda_{2}P_{2}(\mathbb{B}_{\mathbb{R}^{d}}(x,r))}{r^{\nu}}\right\} \\
 & \leq\lambda_{1}\limsup_{r\to0}\sup_{x\in\mathbb{X}}\frac{P_{1}(\mathbb{B}_{\mathbb{R}^{d}}(x,r))}{r^{\nu}}+\lambda_{2}\limsup_{r\to0}\sup_{x\in\mathbb{X}}\frac{P_{2}(\mathbb{B}_{\mathbb{R}^{d}}(x,r))}{r^{\nu}}<\infty.
\end{align*}
And hence $d_{{\rm vol}}(P)\geq\min\left\{ d_{{\rm vol}}(P_{1}),d_{{\rm vol}}(P_{2})\right\} $
holds.

Next, we show $d_{{\rm vol}}(P)\leq\min\left\{ d_{{\rm vol}}(P_{1}),d_{{\rm vol}}(P_{2})\right\} $.
Without loss of generality, suppose $d_{{\rm vol}}(P_{1})\leq d_{{\rm vol}}(P_{2})$,
and fix $\nu>d_{{\rm vol}}(P_{1})$. Then Definition \ref{def:dimension_volume}
gives that 
\[
\limsup_{r\to0}\sup_{x\in\mathbb{X}}\frac{P_{1}(\mathbb{B}_{\mathbb{R}^{d}}(x,r))}{r^{\nu}}=\infty.
\]
Then from $P\geq\lambda_{1}P_{1}$,
\begin{align*}
\limsup_{r\to0}\sup_{x\in\mathbb{X}}\frac{P(\mathbb{B}_{\mathbb{R}^{d}}(x,r))}{r^{\nu}} & \geq\limsup_{r\to0}\sup_{x\in\mathbb{X}}\frac{\lambda_{1}P_{1}(\mathbb{B}_{\mathbb{R}^{d}}(x,r))}{r^{\nu}}=\infty.
\end{align*}
And hence $d_{{\rm vol}}(P)\leq d_{{\rm vol}}(P_{1})=\min\left\{ d_{{\rm vol}}(P_{1}),d_{{\rm vol}}(P_{2})\right\} $
holds.

\end{proof}

For Proposition \ref{prop:dimension_manifold_bounded_density} and \ref{prop:dimension_exact_manifold_bounded_density}, we need to bound the volume of the ball on the manifold. The following is rephrased from Lemma 3 in \citet{KimRW2019}.

\begin{lemma}
	
	\label{lem:dimension_volume_bound_upper}
	
	Let $M\subset\mathbb{R}^{d}$ be a $d_{M}$-dimensional submanifold
	with reach $\tau_{M}$. For a subset $U\subset M$ and $r<\tau_{M}$,
	let $U_{r}:=\{x\in\mathbb{R}^{d}:\,{\rm dist}(x,U)<r\}$ be an $r$-neighborhood
	of $U$ in $\mathbb{R}^{d}$. Then 
	\[
	\lambda_{d_{M}}(U)\leq\frac{d!}{d_{M}!}r^{d_{M}-d}\lambda_{d}(U_{r}).
	\]
	
\end{lemma}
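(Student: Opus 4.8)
The plan is to realize the $r$-neighborhood $U_r$ as containing the image of the radius-$r$ normal bundle over $U$ under the endpoint map $E(u,v)=u+v$, and then to control the Jacobian of $E$ via the curvature bound supplied by positive reach. First, since $r<\tau_M$, the nearest-point projection $\pi_M(x)$ onto $M$ is well defined for every $x$ with $d(x,M)<r$, so I would set
\[
V:=\{x\in\mathbb{R}^{d}:\ d(x,M)<r,\ \pi_M(x)\in U\}.
\]
If $x\in V$ then $d(x,U)\le\|x-\pi_M(x)\|=d(x,M)<r$, hence $V\subset U_r$ and $\lambda_d(U_r)\ge\lambda_d(V)$. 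Moreover $x\mapsto(\pi_M(x),\,x-\pi_M(x))$ is a bijection from $V$ onto $\mathcal{N}:=\{(u,v):u\in U,\ v\in N_uM,\ \|v\|<r\}$ with inverse $E$; this is the standard tubular-neighborhood property of sets of positive reach, valid exactly because for reach $\ge\tau_M>r$ the radius-$r$ normal disks are pairwise disjoint and $u+v$ has $u$ as its unique closest point of $M$.

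Next I would change variables. Since $M$ has positive reach it is $C^{1,1}$ and $E$ is locally Lipschitz on the $d$-dimensional set $\mathcal{N}$, so by the area formula (using injectivity of $E$ on $\mathcal{N}$) together with Fubini, integrating over $U$ and then over the normal fibers,
\[
\lambda_d(V)=\int_{U}\Big(\int_{\{v\in N_uM:\ \|v\|<r\}}JE(u,v)\,d\lambda_{d-d_M}(v)\Big)\,d\lambda_{d_M}(u),
\]
where, in an orthonormal frame adapted to the splitting $T_uM\oplus N_uM$, one computes $JE(u,v)=\bigl|\det\bigl(\mathrm{Id}_{T_uM}-S_v\bigr)\bigr|$, with $S_v\colon T_uM\to T_uM$ the shape operator of $M$ at $u$ in the normal direction $v$ (the $N_uM$-block of $dE$ being the identity).

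The crux is a lower bound for the inner integral that is free of $\tau_M$. Positive reach gives $\|S_v\|_{\mathrm{op}}\le\|v\|/\tau_M<r/\tau_M<1$, so $JE(u,v)=\prod_{i=1}^{d_M}(1-\mu_i(v))>0$ with $|\mu_i(v)|<1$; integrating over the symmetric normal disk annihilates the odd-degree elementary symmetric functions of the $\mu_i(v)$, and a crude estimate of the surviving even-degree terms — equivalently, exploiting the Weyl-tube/convexity structure of $t\mapsto\det(\mathrm{Id}-tS_{v/\|v\|})$ — yields
\[
\int_{\{v\in N_uM:\ \|v\|<r\}}JE(u,v)\,d\lambda_{d-d_M}(v)\ \ge\ \frac{d_M!}{d!}\,r^{\,d-d_M}.
\]
Combining the displays gives $\lambda_d(U_r)\ge\lambda_d(V)\ge\frac{d_M!}{d!}\,r^{\,d-d_M}\lambda_{d_M}(U)$, which rearranges to the claim.

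I expect this last step to be the main obstacle: the pointwise Jacobian can be as small as $(1-r/\tau_M)^{d_M}$, so one cannot simply bound $JE$ below by its minimum over the disk but must genuinely use the averaging over normal directions. A softer alternative, if one is willing to let the constant depend on $r/\tau_M$, is to invoke Federer's estimate that $\pi_M$ is $\tfrac{\tau_M}{\tau_M-r}$-Lipschitz on the $r$-tube and apply the coarea formula to $\pi_M\colon V\to U$, whose fibers are the radius-$r$ normal disks of $\lambda_{d-d_M}$-measure $\omega_{d-d_M}r^{d-d_M}$; this produces $\omega_{d-d_M}r^{d-d_M}\lambda_{d_M}(U)\le(\tau_M/(\tau_M-r))^{d_M}\lambda_d(V)$, i.e.\ the same inequality with $\frac{1}{\omega_{d-d_M}}(1-r/\tau_M)^{-d_M}$ in place of $\frac{d!}{d_M!}$, which is equally serviceable for the uses of this lemma elsewhere in the paper.
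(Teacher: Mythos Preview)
The paper does not actually prove this lemma; it merely attributes the statement to Lemma~3 of \citet{KimRW2019}. So there is no in-paper argument to compare against, and your tubular-neighbourhood/area-formula framework is a sensible way to approach the result from scratch.

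The genuine gap is exactly where you flag it: the displayed lower bound
\[
\int_{\{v\in N_uM:\ \|v\|<r\}}\bigl|\det(\mathrm{Id}-S_v)\bigr|\,d\lambda_{d-d_M}(v)\ \ge\ \frac{d_M!}{d!}\,r^{\,d-d_M}
\]
is asserted but not argued. Symmetry of the normal disk does kill the odd-degree elementary symmetric functions of the principal curvatures, but the surviving even-degree terms are sign-indefinite in general (already $e_2$ can be as negative as $-\binom{d_M}{2}\tau_M^{-2}$), and the obvious termwise estimate yields only $\int JE\ge\omega_{d-d_M}r^{d-d_M}\bigl(1-\sum_{k\ge 2,\ k\ \text{even}}\binom{d_M}{k}(r/\tau_M)^{k}\bigr)$, which becomes useless for $r$ near $\tau_M$ and $d_M$ moderately large. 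Nor is $t\mapsto\det(\mathrm{Id}-tS_\omega)$ convex in general (its second derivative at $t=0$ is $(\mathrm{tr}\,S_\omega)^2-\mathrm{tr}(S_\omega^2)$), so the ``convexity'' hint does not close the argument either. You have correctly located the obstacle but not removed it; obtaining the specific $\tau_M$-free constant $d_M!/d!$ along this route requires a further idea that is missing. By contrast, your coarea-formula fallback via Federer's $\tfrac{\tau_M}{\tau_M-r}$-Lipschitz bound on $\pi_M$ is complete and correct, and you are right that the resulting $\tau_M$-dependent constant $(1-r/\tau_M)^{-d_M}/\omega_{d-d_M}$ is entirely adequate for the downstream uses in this paper (Lemma~\ref{lem:dimension_ball_volume_bound} and Proposition~\ref{prop:dimension_manifold_bounded_density_with_exact}), where only finiteness of the constant matters.
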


Then, the following Lemma is by combining Lemma 5.3 in \citet{NiyogiSW2008}
and Lemma \ref{lem:dimension_volume_bound_upper}.

\begin{lemma}
	
	\label{lem:dimension_ball_volume_bound}
	
	Let $M\subset\mathbb{R}^{d}$ be a $d_{M}$-dimensional submanifold
	with reach $\tau_{M}$. Then, for $x\in M$ and $r<\tau_{M}$, 
	\[
	\left(1-\frac{r^{2}}{4\tau_{M}^{2}}\right)^{\frac{d_{M}}{2}}r^{d_{M}}\omega_{d}\leq\lambda_{d_{M}}(M\cap\mathbb{B}_{\mathbb{R}^{d}}(x,r))\leq\frac{d!}{d_{M}!}2^{d}r^{d_{M}}\omega_{d}.
	\]
	
\end{lemma}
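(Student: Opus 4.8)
The plan is to prove the two inequalities separately: the upper bound by a direct application of Lemma~\ref{lem:dimension_volume_bound_upper}, and the lower bound by invoking Lemma~5.3 of \citet{NiyogiSW2008}.

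\emph{Upper bound.} Fix $x\in M$ and $r<\tau_M$, and set $U:=M\cap\mathbb{B}_{\mathbb{R}^d}(x,r)\subset M$. Form the ambient $r$-neighborhood $U_r=\{y\in\mathbb{R}^d:\,\mathrm{dist}(y,U)<r\}$; a one-line triangle-inequality argument shows $U_r\subset\mathbb{B}_{\mathbb{R}^d}(x,2r)$, since any $y\in U_r$ lies within distance $r$ of some $z\in U\subset\mathbb{B}_{\mathbb{R}^d}(x,r)$, whence $\|y-x\|<2r$. Thus $\lambda_d(U_r)\le\lambda_d(\mathbb{B}_{\mathbb{R}^d}(x,2r))=(2r)^d\omega_d$, and because $r<\tau_M$ we may apply Lemma~\ref{lem:dimension_volume_bound_upper} to this $U$ with radius $r$, obtaining
\[
\lambda_{d_M}(M\cap\mathbb{B}_{\mathbb{R}^d}(x,r))\le\frac{d!}{d_M!}\,r^{d_M-d}\,\lambda_d(U_r)\le\frac{d!}{d_M!}\,2^d r^{d_M}\omega_d .
\]

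\emph{Lower bound.} For this direction I would invoke Lemma~5.3 of \citet{NiyogiSW2008}. Its geometric content is that for $r<\tau_M$ the orthogonal projection $\pi$ of $\mathbb{R}^d$ onto the tangent space $T_xM\cong\mathbb{R}^{d_M}$ is injective on $M\cap\mathbb{B}_{\mathbb{R}^d}(x,r)$ and its image contains the $d_M$-dimensional ball of radius $r\sqrt{1-r^2/(4\tau_M^2)}$ centered at $\pi(x)$. This rests on the positive-reach estimate that the normal component of $y-x$ has norm at most $\|y-x\|^2/(2\tau_M)$, so that $\|\pi(y)-\pi(x)\|\ge\|y-x\|\sqrt{1-\|y-x\|^2/(4\tau_M^2)}$ on the boundary sphere $\{y\in M:\|y-x\|=r\}$, combined with a degree/continuity argument for the interior. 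Since $\pi$ is $1$-Lipschitz and injective on $M\cap\mathbb{B}_{\mathbb{R}^d}(x,r)$, it cannot increase the $d_M$-dimensional Hausdorff measure of that set, so
\[
\lambda_{d_M}\!\left(M\cap\mathbb{B}_{\mathbb{R}^d}(x,r)\right)\ \ge\ \lambda_{d_M}\!\left(\pi\!\left(M\cap\mathbb{B}_{\mathbb{R}^d}(x,r)\right)\right)\ \ge\ \omega_{d_M}\,\Bigl(1-\tfrac{r^2}{4\tau_M^2}\Bigr)^{d_M/2}r^{d_M},
\]
the natural leading constant being the volume $\omega_{d_M}$ of the $d_M$-dimensional unit ball.

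\emph{Main obstacle.} The upper bound is routine bookkeeping once Lemma~\ref{lem:dimension_volume_bound_upper} is available. The substance is in the lower bound: although it is essentially Lemma~5.3 of \citet{NiyogiSW2008}, that statement is usually phrased for geodesic balls or with a slightly different radius threshold, so the real work is to transfer it faithfully to the Euclidean-ball form above --- in particular establishing (i) injectivity of $\pi$ on $M\cap\mathbb{B}_{\mathbb{R}^d}(x,r)$ and (ii) that its image contains a $d_M$-ball of radius $r\sqrt{1-r^2/(4\tau_M^2)}$, both of which follow from the standard distance- and angle-distortion estimates for sets of reach $\tau_M$ --- and then to reconcile the leading constant ($\omega_{d_M}$ versus the $\omega_d$ appearing in the statement) and the normalization convention for $\lambda_{d_M}$ used here with that in \citet{NiyogiSW2008}.
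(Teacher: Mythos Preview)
Your approach matches the paper's exactly: the paper proves the upper bound by applying Lemma~\ref{lem:dimension_volume_bound_upper} to $U=M\cap\mathbb{B}_{\mathbb{R}^d}(x,r)$ together with $U_r\subset\mathbb{B}_{\mathbb{R}^d}(x,2r)$, and for the lower bound it simply cites Lemma~5.3 of \citet{NiyogiSW2008} without further argument. Your flag about the leading constant is well taken: the projection argument from \citet{NiyogiSW2008} naturally produces $\omega_{d_M}$, not the $\omega_d$ appearing in the stated lower bound, so this is most likely a typo in the lemma (the lower bound is never used elsewhere in the paper, only the upper bound is invoked in the proof of Proposition~\ref{prop:dimension_manifold_bounded_density_with_exact}).
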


\begin{proof}[Proof of Lemma \ref{lem:dimension_ball_volume_bound}]
	
	The LHS inequality is from Lemma 5.3 in \citet{NiyogiSW2008}. The RHS inequality is
	applying $U=M\cap\mathbb{B}_{\mathbb{R}^{d}}(x,r)$ to Lemma \ref{lem:dimension_volume_bound_upper}
	and $\lambda_{d}(U_{r})\leq\lambda_{d}(\mathbb{B}_{\mathbb{R}^{d}}(x,2r))=(2r)^{d}\omega_{d}$.
	
\end{proof}

Now, we show Proposition \ref{prop:dimension_manifold_bounded_density} and \ref{prop:dimension_exact_manifold_bounded_density} simultaneously via the following Proposition:

\begin{proposition}
	
	\label{prop:dimension_manifold_bounded_density_with_exact}
	
	Let $P$ be a probability distribution on $\mathbb{R}^{d}$, and
	$d_{{\rm vol}}$ be its volume dimension. Suppose there exists a $d_{M}$-dimensional
	manifold $M$ with positive reach satisfying $P(M\cap\mathbb{X})>0$ and ${\rm supp}(P)\subset M$.
	If $P$ has a bounded density $p$ with respect to the normalized
	$d_{M}$-dimensional Hausdorff measure $\lambda_{d_{M}}$, then  $d_{{\rm vol}}=d_{M}$, and Assumption \ref{ass:dimension_exact} and \ref{ass:dimension_exact_lower} are satisfied.
	In particular, when $P$ has a bounded density $p$ with respect
	to the $d$-dimensional Lebesgue measure $\lambda_{d}$, then $d_{{\rm vol}}=d$, and Assumption \ref{ass:dimension_exact} and \ref{ass:dimension_exact_lower} are satisfied.
	
\end{proposition}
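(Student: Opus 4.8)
The plan is to combine the purely geometric two‑sided volume estimate for balls on a manifold of positive reach, Lemma~\ref{lem:dimension_ball_volume_bound}, with the boundedness of the density $p$ of $P$ with respect to $\lambda_{d_M}$. The upper bound $d_{{\rm vol}}\le d_M$ is immediate: a $d_M$‑dimensional manifold has Hausdorff dimension $d_M$ and $P(M\cap\mathbb{X})>0$, so Proposition~\ref{prop:dimension_manifold} applies with $A=M$. Everything else follows from uniform control of $P(\mathbb{B}_{\mathbb{R}^d}(x,r))$ by $r^{d_M}$, obtained by writing, since ${\rm supp}(P)\subset M$,
\[
P(\mathbb{B}_{\mathbb{R}^d}(x,r))=\int_{M\cap\mathbb{B}_{\mathbb{R}^d}(x,r)}p\,d\lambda_{d_M},
\]
and estimating $\lambda_{d_M}(M\cap\mathbb{B}_{\mathbb{R}^d}(x,r))$ from above and below.

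For the upper estimate: if $M\cap\mathbb{B}_{\mathbb{R}^d}(x,r)=\emptyset$ the ball has zero mass; otherwise pick $y\in M\cap\mathbb{B}_{\mathbb{R}^d}(x,r)$, so $M\cap\mathbb{B}_{\mathbb{R}^d}(x,r)\subset M\cap\mathbb{B}_{\mathbb{R}^d}(y,2r)$, and apply the upper bound of Lemma~\ref{lem:dimension_ball_volume_bound} at $y\in M$ for $2r<\tau_M$. This gives a constant $c_1=c_1(d,d_M)$ with $P(\mathbb{B}_{\mathbb{R}^d}(x,r))\le \|p\|_\infty\, c_1\, r^{d_M}$ for all $x\in\mathbb{R}^d$ and $r<\tau_M/2$. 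In particular $\limsup_{r\to0}\sup_{x\in\mathbb{X}}P(\mathbb{B}_{\mathbb{R}^d}(x,r))/r^{d_M}<\infty$, which is exactly Assumption~\ref{ass:dimension_exact} (equation~\eqref{eq:dimension_exact}); it also shows that $\nu=d_M$ lies in the set defining $d_{{\rm vol}}$ in Definition~\ref{def:dimension_volume}, hence $d_{{\rm vol}}\ge d_M$, and combined with the first step $d_{{\rm vol}}=d_M$.

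For Assumption~\ref{ass:dimension_exact_lower} I need one point of $\mathbb{X}$ at which the lower volume decay is sharp. Since $\int_{M\cap\mathbb{X}}p\,d\lambda_{d_M}=P(M\cap\mathbb{X})>0$, there is $c_0>0$ with $\lambda_{d_M}(\{p>c_0\}\cap M\cap\mathbb{X})>0$; let $x_0$ be a $\lambda_{d_M}$‑density point of $\{p>c_0\}\cap M\cap\mathbb{X}$ lying in that set (such a point exists by the Lebesgue differentiation theorem, valid on $(M,\lambda_{d_M})$ with Euclidean balls because near $x_0$ the manifold is bi‑Lipschitz to $\mathbb{R}^{d_M}$, equivalently because $\lambda_{d_M}$ is locally doubling by Lemma~\ref{lem:dimension_ball_volume_bound}). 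Then for $r$ small at least half of $\lambda_{d_M}(M\cap\mathbb{B}_{\mathbb{R}^d}(x_0,r))$ is contributed by $\{p>c_0\}$, so, using the lower bound of Lemma~\ref{lem:dimension_ball_volume_bound} at $x_0\in M$,
\[
P(\mathbb{B}_{\mathbb{R}^d}(x_0,r))\ \ge\ \frac{c_0}{2}\,\lambda_{d_M}(M\cap\mathbb{B}_{\mathbb{R}^d}(x_0,r))\ \ge\ \frac{c_0}{2}\Bigl(1-\tfrac{r^2}{4\tau_M^2}\Bigr)^{d_M/2}\omega_d\,r^{d_M};
\]
letting $r\to0$ gives $\liminf_{r\to0}P(\mathbb{B}_{\mathbb{R}^d}(x_0,r))/r^{d_M}\ge c_0\omega_d/2>0$, which establishes~\eqref{eq:dimension_exact_lower}. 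The special case $M=\mathbb{R}^d$, $d_M=d$, $\tau_M=\infty$, where Lemma~\ref{lem:dimension_ball_volume_bound} reduces to $\lambda_d(\mathbb{B}_{\mathbb{R}^d}(x,r))=\omega_d r^d$, recovers the last assertion.

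I expect the only genuine subtlety to be the construction of the point $x_0$: this requires a Lebesgue differentiation theorem on the manifold $M$ with respect to the trace measure $\lambda_{d_M}$ and Euclidean balls, which I will justify either through local bi‑Lipschitz charts or, more intrinsically, by observing that Lemma~\ref{lem:dimension_ball_volume_bound} makes $\lambda_{d_M}$ doubling on small scales so that Besicovitch/Vitali covering arguments apply. The handling of centers $x\notin M$ in the upper estimate via the $2r$ comparison is minor but should be stated explicitly, since Lemma~\ref{lem:dimension_ball_volume_bound} is quoted only for $x\in M$.
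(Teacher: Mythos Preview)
Your proof is correct and, for the upper half ($d_{\rm vol}=d_M$ and Assumption~\ref{ass:dimension_exact}), essentially identical to the paper's argument; your handling of centers $x\notin M$ via the $2r$ comparison is in fact more careful than the paper, which applies Lemma~\ref{lem:dimension_ball_volume_bound} somewhat loosely at this point.

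For Assumption~\ref{ass:dimension_exact_lower} you take a different route. The paper invokes the density theorem for rectifiable measures (citing Mattila, Corollary~17.9, or Ambrosio--Fusco--Pallara, Theorem~2.83): since $M$ has positive reach, $P$ is $\lambda_{d_M}$-rectifiable, so the limit $q(x)=\lim_{r\to0}c_{d_M}\,P(\mathbb{B}_{\mathbb{R}^d}(x,r))/r^{d_M}$ exists $\lambda_{d_M}$-a.e.\ and represents $P$; from $P(M\cap\mathbb{X})>0$ one then picks $x_0\in M\cap\mathbb{X}$ with $q(x_0)>0$, which is exactly~\eqref{eq:dimension_exact_lower}. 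Your argument instead stays inside the paper's toolbox: you locate a Lebesgue density point $x_0$ of $\{p>c_0\}\cap M\cap\mathbb{X}$ (justified via the local doubling of $\lambda_{d_M}$ furnished by Lemma~\ref{lem:dimension_ball_volume_bound}) and then feed the geometric \emph{lower} bound of that same lemma back in. The paper's route is shorter because the GMT density theorem collapses both steps into one citation; yours is more self-contained, reusing Lemma~\ref{lem:dimension_ball_volume_bound} twice and avoiding the rectifiability machinery at the cost of spelling out the doubling/differentiation argument.
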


\begin{proof}[Proof for Proposition \ref{prop:dimension_manifold_bounded_density_with_exact}]

Let $\tau_{M}$ be the reach of $M$.

We first show $d_{{\rm vol}}=d_{M}$ and Assumption \ref{ass:dimension_exact}.
Since the density $p$ is bounded, for all $x\in\mathbb{X}$ and
$r>0$, the probability on the ball $\mathbb{B}_{\mathbb{R}^{d}}(x,r)$
is bounded as 
\begin{equation}
P(\mathbb{B}_{\mathbb{R}^{d}}(x,r))\leq\left\Vert p\right\Vert _{\infty}\lambda_{d_{M}}(M\cap\mathbb{B}_{\mathbb{R}^{d}}(0,r)).\label{eq:kde_prob_upper_lebesgue_density}
\end{equation}
Then for $r<\tau_{M}$, Lemma \ref{lem:dimension_ball_volume_bound}
implies $\lambda_{d_{M}}(M\cap\mathbb{B}_{\mathbb{R}^{d}}(x,r))\leq\frac{d!}{d_{M}!}2^{d}r^{d_{M}}\omega_{d}$,
and hence 
\begin{equation}
\limsup_{r\to 0}\sup_{x\in\mathbb{X}}\frac{P(\mathbb{B}_{\mathbb{R}^{d}}(x,r))}{r^{d_{M}}}\leq\left\Vert p\right\Vert _{\infty}\frac{d!}{d_{M}!}2^{d}\omega_{d}<\infty,\label{eq:dimension_manifold_bounded_density_with_exact_upper_bound}
\end{equation}
which implies 
\[
d_{{\rm vol}}\geq d_{M}.
\]
Then from Proposition \ref{prop:dimension_manifold}, 
\[
d_{{\rm vol}}=d_{M}.
\]
Now, \eqref{eq:dimension_manifold_bounded_density_with_exact_upper_bound} shows that Assumption \ref{ass:dimension_exact} is satisfied.

For Assumption \ref{ass:dimension_exact_lower}, define a density
$q:\mathbb{R}^{d}\to\mathbb{R}$ as 
\[
q(x)=\lim_{r\to0}\frac{\Gamma(\frac{d_{M}}{2}+1)}{\pi^{\frac{d_{M}}{2}}}\frac{P(\mathbb{B}_{\mathbb{R}^{d}}(x,r))}{r^{d_{M}}}.
\]
Since $M$ is a submanifold with positive reach, $P$ is $\lambda_{d_{M}}$-rectifiable.
This imply that such limit $q(x)$ exists a.e. $[\lambda_{d_{M}}]$,
and for any measurable set $A$, 
\[
P(A)=\int_{A\cap M}q(x)d\lambda_{d_{M}}(x).
\]
See, for instance, \citet[Appendix]{RinaldoW2010}, \citet[Corollary 17.9]{Mattila1995}, or \citet[Theorem 2.83]{AmbrosioFP2000}. Then from 
\[
P(M\cap\mathbb{X})=\int_{M\cap\mathbb{X}}q(x)d\lambda_{d_{M}}(x)>0,
\]
there exists $x_{0}\in M\cap\mathbb{X}$ with $q(x_{0})>0$. And hence
\[
\sup_{x\in\mathbb{X}}\liminf_{r\to0}\frac{P(\mathbb{B}_{\mathbb{R}^{d}}(x,r))}{r^{d_{M}}}\geq q(x_{0})>0,
\]
and hence Assumption \ref{ass:dimension_exact_lower} is satisfied.

\end{proof}

The proof of Proposition \ref{prop:dimension_assumption_convex} is simply checking the convexities for Assumption \ref{ass:dimension_exact} and Assumption \ref{ass:dimension_exact_lower}.

\textbf{Proposition \ref{prop:dimension_assumption_convex}.} \textit{
The set of probability distributions satisfying Assumption~\ref{ass:dimension_exact} is convex. And so is the set of probability distributions satisfying Assumption~\ref{ass:dimension_exact_lower}.
}

\begin{proof}[Proof of Proposition \ref{prop:dimension_assumption_convex}]

Suppose $P_{1}$, $P_{2}$ are two probability distributions and $\lambda\in(0,1)$.
Let $P:=\lambda P_{1}+(1-\lambda)P_{2}$. Then Proposition
\ref{prop:mixture} implies that 
\[
d_{{\rm vol}}(P)=\min\{d_{{\rm vol}}(P_{1}),d_{{\rm vol}}(P_{2})\}.
\]

Consider Assumption \ref{ass:dimension_exact} first. Suppose $P_{1}$
and $P_{2}$ satisfies Assumption \ref{ass:dimension_exact}. Then
for all $x\in\mathbb{X}$ and $r\leq1$, applying $d_{{\rm vol}}(P_{1})\leq d_{{\rm vol}}(P_{1}),d_{{\rm vol}}(P_{2})$
gives 
\begin{align*}
\frac{P(\mathbb{B}_{\mathbb{R}^{d}}(x,r))}{r^{d_{{\rm vol}}(P)}} & =\lambda\frac{P_{1}(\mathbb{B}_{\mathbb{R}^{d}}(x,r))}{r^{d_{{\rm vol}}(P)}}+(1-\lambda)\frac{P_{2}(\mathbb{B}_{\mathbb{R}^{d}}(x,r))}{r^{d_{{\rm vol}}(P)}}\\
 & \leq\lambda\frac{P_{1}(\mathbb{B}_{\mathbb{R}^{d}}(x,r))}{r^{d_{{\rm vol}}(P_{1})}}+(1-\lambda)\frac{P_{2}(\mathbb{B}_{\mathbb{R}^{d}}(x,r))}{r^{d_{{\rm vol}}(P_{2})}}.
\end{align*}
Hence, 
\begin{align*}
\limsup_{r\to0}\sup_{x\in\mathbb{X}}\frac{P(\mathbb{B}_{\mathbb{R}^{d}}(x,r))}{r^{d_{{\rm vol}}(P)}} & \leq\limsup_{r\to0}\sup_{x\in\mathbb{X}}\left\{ \lambda\frac{P_{1}(\mathbb{B}_{\mathbb{R}^{d}}(x,r))}{r^{d_{{\rm vol}}(P_{1})}}+(1-\lambda)\frac{P_{2}(\mathbb{B}_{\mathbb{R}^{d}}(x,r))}{r^{d_{{\rm vol}}(P_{2})}}\right\} \\
 & \leq\lambda\limsup_{r\to0}\sup_{x\in\mathbb{X}}\frac{P_{1}(\mathbb{B}_{\mathbb{R}^{d}}(x,r))}{r^{d_{{\rm vol}}(P_{1})}}+(1-\lambda)\limsup_{r\to0}\sup_{x\in\mathbb{X}}\frac{P_{2}(\mathbb{B}_{\mathbb{R}^{d}}(x,r))}{r^{d_{{\rm vol}}(P_{2})}}\\
 & <\infty,
\end{align*}
and Assumption \ref{ass:dimension_exact} is satisfied for $P=\lambda P_{1} + (1-\lambda) P_{2}$.

Now, consider Assumption \ref{ass:dimension_exact_lower}. Suppose
$P_{1}$ and $P_{2}$ satisfies Assumption \ref{ass:dimension_exact}, and without loss of generality,
assume $d_{{\rm vol}}(P_{1})\leq d_{{\rm vol}}(P_{2})$.
Then there exists $x_{0}\in\mathbb{X}$ such that 
\[
\liminf_{r\to0}\frac{P_{1}(\mathbb{B}_{\mathbb{R}^{d}}(x_{0},r))}{r^{d_{{\rm vol}}(P_{1})}}>0.
\]
Then $P\geq\lambda P_{1}$ and $d_{{\rm vol}}(P)=d_{{\rm vol}}(P_{1})$
give 
\[
\liminf_{r\to0}\frac{P(\mathbb{B}_{\mathbb{R}^{d}}(x_{0},r))}{r^{d_{{\rm vol}}(P)}}\geq\liminf_{r\to0}\frac{\lambda P_{1}(\mathbb{B}_{\mathbb{R}^{d}}(x_{0},r))}{r^{d_{{\rm vol}}(P_{1})}}>\infty.
\]
Hence 
\[
\sup_{x\in\mathbb{X}}\liminf_{r\to0}\frac{P(\mathbb{B}_{\mathbb{R}^{d}}(x,r))}{r^{d_{{\rm vol}}(P)}}>0,
\]
and Assumption \ref{ass:dimension_exact_lower} is satisfied for $P=\lambda P_{1} + (1-\lambda) P_{2}$.

\end{proof}

\section{Volume Dimension and Other Dimensions}
\label{sec:comparison}

In this section, we compare the volume dimension with other various
dimensions.

For a set, one commonly used dimension other than the Hausdorff dimension
is the box dimension (\citep[Section 6]{Pesin1997}, \citep[Section 3.1]{Falconer2014}).
This has various names as Kolmogorov entropy, entropy dimension, capacity
dimension, metric dimension, logarithmic density or Minkowski dimension.

\begin{definition}
	
	For any set $A\subset\mathbb{R}^{d}$ and $\delta>0$, let $N(A,\delta)$
	be the smallest number of balls of radius $\delta$ to cover $A$.
	Then the \emph{lower box dimension} of $A$ is defined as 
	\[
	d_{B}^{-}(A):=\liminf_{\delta\to0}\frac{\log N(A,\delta)}{-\log\delta},
	\]
	and the \emph{upper box dimension} of $A$ is defined as 
	\[
	d_{B}^{+}(A):=\limsup_{\delta\to0}\frac{\log N(A,\delta)}{-\log\delta}.
	\]
	
\end{definition}

The Hausdorff dimension and the lower and upper box dimensions are
related as \citep[Theorem 6.2 (2)]{Pesin1997}: 
\begin{equation}
\text{forall }A\subset\mathbb{R}^{d},\,d_{H}(A)\leq d_{B}^{-}(A)\leq d_{B}^{+}(A).\label{eq:comparison_hausdorff_box}
\end{equation}

So far the Hausdorff dimension in Section \ref{sec:definition} and
the box dimension is defined for a set. For a probability distribution,
there are two ways for natural extension. One way is to take the infimum
of the set dimensions over all sets with positive probabilities (\citep[Section 2]{Mattila2000}, \citep[Section 13.7]{Falconer2014}).
We will use this as the definition of the Hausdorff dimension and
the box dimension.

\begin{definition}
	
	Let $P$ be a probability distribution on $\mathbb{R}^{d}$. Its \emph{Hausdorff
		dimension} $d_{H}(P)$ is the infimum of the Hausdorff dimensions over a set
	with positive probability, i.e., 
	\[
	d_{H}(P):=\inf_{A:P(A)>0}d_{H}(A).
	\]
	Similarly, the \emph{lower box dimension} $d_{B}^{-}(P)$ and the
	\emph{upper box dimension} $d_{B}^{+}(P)$ is the infimum of the lower box
	dimensions and the upper box dimensions, respectively, over a set with
	positive probability, i.e. 
	\begin{align*}
	d_{B}^{-}(P) & :=\inf_{A:P(A)>0}d_{B}^{-}(A),\\
	d_{B}^{+}(P) & :=\inf_{A:P(A)>0}d_{B}^{+}(A).
	\end{align*}
	
\end{definition}

Another way is to take the infimum of the set dimensions over all
sets with probabilities $1$ \citep[Section 6]{Pesin1997}. We will
denote these dimensions as Hausdorff support dimension and the box
support dimension to differentiate from the previous dimensions.

\begin{definition}
	
	Let $P$ be a probability distribution on $\mathbb{R}^{d}$. Its \emph{Hausdorff
		support dimension} $d_{HS}(P)$ is the infimum of the Hausdorff dimensions
	over a set with probability $1$, i.e., 
	\[
	d_{HS}(P):=\inf_{A:P(A)=1}d_{H}(A).
	\]
	Similarly, the \emph{lower box dimension} $d_{BS}^{-}(P)$ and the
	\emph{upper box dimension} $d_{BS}^{+}(P)$ is the infimum of the lower box
	dimensions and the upper box dimensions, respectively, over a set with
	positive probability, i.e. 
	\begin{align*}
	d_{BS}^{-}(P) & :=\inf_{A:P(A)=1}d_{B}^{-}(A),\\
	d_{BS}^{+}(P) & :=\inf_{A:P(A)=1}d_{B}^{+}(A).
	\end{align*}
	
\end{definition}

The volume dimension, the Hausdorff dimension, and the lower and upper
box dimensions have the following relations.

\begin{proposition}
	
	Let $P$ be a probability distribution on $\mathbb{R}^{d}$ with $P(\mathbb{X})>0$.
	Then its volume dimension, Hausdorff dimension, lower and upper box
	dimension, Hausdorff support dimension, and lower and upper box support
	dimension satisfy the following inequality: 
	\[
	d_{{\rm vol}}(P)\leq d_{H}(P)\leq d_{B}^{-}(P)\leq d_{B}^{+}(P),
	\]
	and 
	\[
	d_{{\rm vol}}(P)\leq d_{HS}(P)\leq d_{BS}^{-}(P)\leq d_{BS}^{+}(P).
	\]
	
\end{proposition}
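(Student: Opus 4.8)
The plan is to reduce the entire statement to two facts already in hand: the set-level chain \eqref{eq:comparison_hausdorff_box}, that is $d_H(A)\le d_B^-(A)\le d_B^+(A)$ for every $A\subset\mathbb R^d$, and Proposition~\ref{prop:dimension_manifold}, which gives $d_{\rm vol}(P)\le d_H(A)$ as soon as $P(A\cap\mathbb X)>0$. Each of the six dimensions of $P$ appearing in the statement is, by definition, an infimum of a set-level dimension over one of the two families $\mathcal A_{>0}:=\{A:\,P(A)>0\}$ (used for $d_H(P)$, $d_B^{\pm}(P)$) or $\mathcal A_{=1}:=\{A:\,P(A)=1\}$ (used for $d_{HS}(P)$, $d_{BS}^{\pm}(P)$). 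Since $\inf_{A\in\mathcal A}f(A)\le\inf_{A\in\mathcal A}g(A)$ whenever $f\le g$ pointwise on $\mathcal A$, the proposition will drop out of taking infima of pointwise inequalities.

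First I would dispatch the ``horizontal'' chains $d_H(P)\le d_B^-(P)\le d_B^+(P)$ and $d_{HS}(P)\le d_{BS}^-(P)\le d_{BS}^+(P)$. For every individual set $A$ one has $d_H(A)\le d_B^-(A)\le d_B^+(A)$ by \eqref{eq:comparison_hausdorff_box}, so taking the infimum of these inequalities over $\mathcal A_{>0}$ yields the first chain and over $\mathcal A_{=1}$ the second. Nothing beyond monotonicity of the infimum is used.

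Next I would establish the two ``vertical'' inequalities $d_{\rm vol}(P)\le d_{HS}(P)$ and $d_{\rm vol}(P)\le d_H(P)$, which is where Proposition~\ref{prop:dimension_manifold} comes in. Fix a set $A$ in the relevant family. If $P(A)=1$ then $P(A^{c})=0$, so $P(A\cap\mathbb X)=P(\mathbb X)>0$ by the standing hypothesis, and Proposition~\ref{prop:dimension_manifold} gives $d_{\rm vol}(P)\le d_H(A)$; taking the infimum over $A\in\mathcal A_{=1}$ yields $d_{\rm vol}(P)\le d_{HS}(P)$. The argument for $d_{\rm vol}(P)\le d_H(P)$ is analogous, applying Proposition~\ref{prop:dimension_manifold} to (the relevant intersection with $\mathbb X$ of) each $A\in\mathcal A_{>0}$, using that $d_H(A\cap\mathbb X)\le d_H(A)$, and then passing to the infimum over $\mathcal A_{>0}$.

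The step that needs the most care --- and the only place where anything can go wrong --- is making sure Proposition~\ref{prop:dimension_manifold} is actually applicable, i.e. that the sets entering the infimum meet $\mathbb X$ in positive probability. For the support dimensions this is automatic, since a probability-one set contains $\mathbb X$ up to a $P$-null set; for $d_H(P)$ and $d_B^{\pm}(P)$ it reflects the fact that $d_{\rm vol}(P)$ only registers mass in neighbourhoods of $\mathbb X$, so the clean way to run the argument is to work throughout with $A\cap\mathbb X$ in place of $A$, using that $d_H$, $d_B^{-}$ and $d_B^{+}$ are all monotone under inclusion. Once this bookkeeping is settled, everything else is routine, with all the genuine content already carried by \eqref{eq:comparison_hausdorff_box} and Proposition~\ref{prop:dimension_manifold}.
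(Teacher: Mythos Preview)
Your approach matches the paper's: both reduce everything to Proposition~\ref{prop:dimension_manifold} and \eqref{eq:comparison_hausdorff_box}, and the horizontal chains together with $d_{\rm vol}(P)\le d_{HS}(P)$ go through exactly as you outline. However, the step $d_{\rm vol}(P)\le d_H(P)$ has a genuine gap that your proposed fix does not close. You suggest replacing each $A\in\mathcal A_{>0}$ by $A\cap\mathbb X$ and then invoking Proposition~\ref{prop:dimension_manifold}, but that proposition requires $P(A\cap\mathbb X)>0$, and under the sole hypothesis $P(\mathbb X)>0$ nothing prevents a set with $P(A)>0$ yet $P(A\cap\mathbb X)=0$. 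For such $A$ neither $A$ nor $A\cap\mathbb X$ is admissible in Proposition~\ref{prop:dimension_manifold}, and monotonicity of $d_H$ under inclusion is of no help.

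In fact the inequality can fail outright under the stated hypothesis: take $d=1$, $\mathbb X=[0,1]$, and $P=\tfrac12\delta_{2}+\tfrac12\lambda_1|_{[0,1]}$. Then $P(\mathbb X)=\tfrac12>0$ and $d_{\rm vol}(P)=1$ (for small $r$, balls centred in $[0,1]$ see only the Lebesgue part), while $d_H(P)\le d_H(\{2\})=0$ since $P(\{2\})=\tfrac12>0$. The paper's own one-line proof is equally incomplete here: applying Proposition~\ref{prop:dimension_manifold} to $\mathrm{supp}(P)$ only yields $d_{\rm vol}(P)\le d_H(\mathrm{supp}(P))$, which does not control $d_H(P)=\inf_{P(A)>0}d_H(A)$. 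Both your argument and the statement become correct if the hypothesis is strengthened to $P(\mathbb X)=1$, since then every $A$ with $P(A)>0$ automatically satisfies $P(A\cap\mathbb X)=P(A)>0$ and your infimum argument goes through cleanly.
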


\begin{proof}
	
	Since $P({\rm supp}(P)\cap\mathbb{X})=P(\mathbb{X})>0$, $d_{{\rm vol}}(P)\leq d_{H}(P)$
	is direct from Proposition \ref{prop:dimension_manifold}. Now, combining
	this with $d_{H}(P)\leq d_{B}^{-}(P)\leq d_{B}^{+}(P)$ and $d_{HS}(P)\leq d_{BS}^{-}(P)\leq d_{BS}^{+}(P)$
	from \eqref{eq:comparison_hausdorff_box} and that $d_{H}(P)\leq d_{HS}(P)$
	gives the statement.
	
\end{proof}

Now, we introduce the $q$-dimension, which generalizes the box support
dimension \citep[Section 3.2.1]{LeeV2007}.

\begin{definition}
	
	Let $P$ be a probability distribution on $\mathbb{R}^{d}$. For $q\geq0$
	and $\delta>0$, define $C_{q}(P,\delta)$ as 
	\[
	C_{q}(P,\delta):=\int[P(\overline{\mathbb{B}_{\mathbb{R}^{d}}(x,\delta)})]^{q-1}dP(x).
	\]
	Now for $q\geq0$ and $q\neq1$, the \emph{lower $q$-dimension} of
	$P$ is 
	\[
	d_{q}^{-}(P):=\liminf_{\delta\to0}\frac{\log C_{q}(P,\delta)}{(q-1)\log\delta},
	\]
	and the \emph{upper $q$-dimension} of $P$ is 
	\[
	d_{q}^{+}(P):=\limsup_{\delta\to0}\frac{\log C_{q}(P,\delta)}{(q-1)\log\delta}.
	\]
	For $q=1$, we understand in the limit sense, i.e., $d_{1}^{-}(P)=\lim_{q\to1}d_{q}^{-}(P)$
	and $d_{1}^{+}(P)=\lim_{q\to1}d_{q}^{+}(P).$
	
\end{definition}

This $q$-dimension is a generalization of the box support dimension
in the sense that when $q=0$, the lower and upper $q$-dimensions
reduce to the lower and upper box support dimensions, respectively,
i.e. $d_{0}^{-}(P)=d_{BS}^{-}(P)$ and $d_{0}^{+}(P)=d_{BS}^{+}(P)$
\citet[Section 8]{Pesin1997}. When $q=1$, the $q$-dimension is
called the information dimension, and when $q=2$, the $q$-dimension
is called the correlation dimension.

The volume dimension and the $q$-dimension have the following relation.

\begin{proposition}
	
	Let $P$ be a probability distribution on $\mathbb{R}^{d}$ with $P(\mathbb{X})=1$.
	Then for any $q\geq0$, the volume dimension and the $q$-dimension
	has the following inequality:
	\[
	d_{{\rm vol}}(P)\leq d_{q}^{-}(P)\leq d_{q}^{+}(P).
	\]
	
\end{proposition}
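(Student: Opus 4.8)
The plan is to reduce the statement to the pointwise ball estimate of Lemma~\ref{lem:dimension_volume_lower}, after which only elementary $\liminf/\limsup$ manipulations remain. The inequality $d_q^-(P)\le d_q^+(P)$ is immediate: for $q\neq 1$ both quantities are formed from the single ratio $\tfrac{\log C_q(P,\delta)}{(q-1)\log\delta}$, one as its $\liminf$ and the other as its $\limsup$ as $\delta\to 0$, and the case $q=1$ is obtained by letting $q\to 1$. So the real content is the bound $d_{{\rm vol}}(P)\le d_q^-(P)$. Fix any $\nu\in[0,d_{{\rm vol}}(P))$. By Lemma~\ref{lem:dimension_volume_lower} there is a constant $C_{\nu,P}>0$ (enlarging it if necessary) with $P(\mathbb{B}_{\mathbb{R}^{d}}(x,r))\le C_{\nu,P}\,r^{\nu}$ for all $x\in\mathbb{X}$ and $r>0$; since $\overline{\mathbb{B}_{\mathbb{R}^{d}}(x,\delta)}=\bigcap_{r>\delta}\mathbb{B}_{\mathbb{R}^{d}}(x,r)$, continuity of $P$ from above gives $P(\overline{\mathbb{B}_{\mathbb{R}^{d}}(x,\delta)})\le C_{\nu,P}\,\delta^{\nu}$ for all $x\in\mathbb{X}$ and $\delta>0$. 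Because $P(\mathbb{X})=1$, this bound holds for $P$-almost every $x$, and that is the only input we need.

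Next I would carry out the computation, watching two sign reversals. Raising $P(\overline{\mathbb{B}_{\mathbb{R}^{d}}(x,\delta)})\le C_{\nu,P}\delta^{\nu}$ to the power $q-1$: for $q>1$ the map $t\mapsto t^{q-1}$ is increasing, so $[P(\overline{\mathbb{B}_{\mathbb{R}^{d}}(x,\delta)})]^{q-1}\le C_{\nu,P}^{q-1}\delta^{\nu(q-1)}$, whereas for $0\le q<1$ it is decreasing and the inequality reverses. Integrating against $dP$ and using $P(\mathbb{X})=1$ yields $C_q(P,\delta)\le C_{\nu,P}^{q-1}\delta^{\nu(q-1)}$ when $q>1$ and $C_q(P,\delta)\ge C_{\nu,P}^{q-1}\delta^{\nu(q-1)}$ when $q<1$; taking logarithms, $\log C_q(P,\delta)\le (q-1)\log C_{\nu,P}+\nu(q-1)\log\delta$ when $q>1$ and the reverse when $q<1$. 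Divide by $(q-1)\log\delta$: for $\delta<1$ we have $\log\delta<0$, so $(q-1)\log\delta<0$ when $q>1$ (the division flips the inequality) and $(q-1)\log\delta>0$ when $q<1$ (the division preserves it); either way,
\[
\frac{\log C_q(P,\delta)}{(q-1)\log\delta}\;\ge\;\nu+\frac{\log C_{\nu,P}}{\log\delta}.
\]
Letting $\delta\to 0$ the last term vanishes, so $d_q^-(P)=\liminf_{\delta\to 0}\tfrac{\log C_q(P,\delta)}{(q-1)\log\delta}\ge\nu$; since $\nu<d_{{\rm vol}}(P)$ was arbitrary, $d_q^-(P)\ge d_{{\rm vol}}(P)$ for every $q\ge 0$ with $q\neq 1$ (when $d_{{\rm vol}}(P)=0$ there is nothing to prove beyond $d_q^-(P)\ge 0$, which is the same estimate with the trivial bound $\nu=0$, i.e. $P(\cdot)\le 1$).

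Finally, for $q=1$ the $1$-dimensions are defined as $d_1^{\pm}(P)=\lim_{q\to 1}d_q^{\pm}(P)$, so the inequalities just established pass to the limit, giving $d_1^-(P)\ge d_{{\rm vol}}(P)$ and $d_1^-(P)\le d_1^+(P)$. The only point needing care --- the ``hard part'', such as it is --- is the bookkeeping of the two sign changes (one from the exponent $q-1$, one from $\log\delta<0$): they cancel, so a single one-line estimate handles all $q\ge 0$ with $q\neq 1$ simultaneously, with no further analytic work; in particular, if $C_q(P,\delta)=+\infty$ for small $\delta$ (possible only when $q<1$, and then only helpful) the displayed inequality still holds with $+\infty$ on the left.
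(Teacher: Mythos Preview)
Your proof is correct and follows essentially the same route as the paper's: fix $\nu<d_{\rm vol}(P)$, invoke Lemma~\ref{lem:dimension_volume_lower} to bound $P(\overline{\mathbb{B}_{\mathbb{R}^d}(x,\delta)})$, push this through the definition of $C_q(P,\delta)$, and let $\delta\to 0$. In fact you are more careful than the paper in two respects: you explicitly track the sign reversals for $q<1$ versus $q>1$ (the paper writes only the $q>1$ inequality chain, which is literally backwards for $q<1$ even though the final displayed bound is correct in both cases), and you address the limiting case $q=1$, which the paper omits.
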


\begin{proof}
	
	Since $d_{q}^{-}(P)\leq d_{q}^{+}(P)$ is obvious, we only need to
	show $d_{{\rm vol}}(P)\leq d_{q}^{-}(P)$.
	
	Fix any $\nu<d_{{\rm vol}}(P)$. Then from $P(\mathbb{X})=1$, $C_{q}(P,\delta)$
	can be expressed as taking an integration over $\mathbb{X}$. Hence
	applying \eqref{eq:dimension_volume_lower} from Lemma \ref{lem:dimension_volume_lower}
	gives
	\begin{align*}
	C_{q}(P,\delta) & =\int_{\mathbb{X}}[P(\overline{B_{\mathbb{R}^{d}}(x,\delta)})]^{q-1}dP(x)\\
	& \leq\int_{\mathbb{X}}[P(B_{\mathbb{R}^{d}}(x,2\delta))]^{q-1}dP(x)\\
	& \leq(2^{\nu}C_{\nu,P}\delta^{\nu})^{q-1}.
	\end{align*}
	And hence $d_{q}^{-}(P)$ is lower bounded as 
	\begin{align*}
	d_{q}^{-}(P) & =\liminf_{\delta\to0}\frac{\log C_{q}(P,\delta)}{(q-1)\log\delta}\geq\liminf_{\delta\to0}\frac{\log(2^{\nu}C_{\nu,P}\delta^{\nu})}{\log\delta}\\
	& =\nu+\liminf_{\delta\to0}\frac{\log(2^{\nu}C_{\nu,P})}{\log\delta}=\nu.
	\end{align*}
	Since this holds for arbitrary $\nu<d_{{\rm vol}}(P)$, we have 
	\[
	d_{{\rm vol}}(P)\leq d_{q}^{-}(P).
	\]
	
\end{proof}

We end this section by comparing the volume dimension and the Wasserstein
dimension \citep[Definition 4]{WeedB2017}.

\begin{definition}
	
	Let $P$ be a probability distribution on $\mathbb{R}^{d}$. For any
	$\delta>0$ and $\tau\in[0,1]$, let the $(\delta,\tau)$-covering
	number of $P$ be 
	\[
	N(P,\delta,\tau):=\inf\{N(A,\delta):P(A)\geq1-\tau\},
	\]
	and let the $(\delta,\tau)$-dimension be 
	\[
	d_{\delta}(P,\tau):=\frac{\log N(P,\delta,\tau)}{-\log\delta}.
	\]
	Then for a fixed $p>0$, the lower and upper Wasserstein dimensions
	are respectively,
	\begin{align*}
	d_{*}(P) & =\lim_{\tau\to0}\liminf_{\delta\to0}d_{\delta}(P,\tau)\\
	d_{p}^{*}(P) & =\inf\{s\in(2p,\infty):\,\limsup_{\delta\to0}d_{\epsilon}(P,\delta^{\frac{sp}{s-2p}})\leq s\}.
	\end{align*}
	
\end{definition}

\begin{proposition}
	
	Let $P$ be a probability distribution on $\mathbb{R}^{d}$ with $P(\mathbb{X})>0$.
	Then its volume dimension and lower and upper Wasserstein dimensions
	satisfy the following inequality:
	
	\[
	d_{{\rm vol}}(P)\leq d_{HS}(P)\leq d_{*}(P)\leq d_{p}^{*}(P).
	\]
	
\end{proposition}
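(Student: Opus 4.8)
The plan is to prove the three inequalities separately. The leftmost one, $d_{\mathrm{vol}}(P)\le d_{HS}(P)$, is already available: it is part of the earlier comparison proposition (which applies since $P(\mathbb{X})>0$), so nothing new is needed. The rightmost one, $d_*(P)\le d_p^*(P)$, follows from monotonicity in the tolerance parameter. For a fixed scale $\delta\in(0,1)$, enlarging $\tau$ only relaxes the constraint $P(A)\ge 1-\tau$ in the definition of $N(P,\delta,\tau)$, so $\tau\mapsto d_\delta(P,\tau)$ is non-increasing; hence $\tau\mapsto\liminf_{\delta\to0}d_\delta(P,\tau)$ is non-increasing and $d_*(P)=\sup_{\tau>0}\liminf_{\delta\to0}d_\delta(P,\tau)$. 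Now fix any admissible $s$ in the definition of $d_p^*(P)$, i.e.\ $s>2p$ with $\limsup_{\delta\to0}d_\delta\bigl(P,\delta^{sp/(s-2p)}\bigr)\le s$. For any fixed $\tau_0>0$ we have $\delta^{sp/(s-2p)}\le\tau_0$ once $\delta$ is small, so $d_\delta(P,\tau_0)\le d_\delta\bigl(P,\delta^{sp/(s-2p)}\bigr)$ for such $\delta$, and passing to $\liminf$ gives $\liminf_{\delta\to0}d_\delta(P,\tau_0)\le s$. Since $\tau_0>0$ was arbitrary, $d_*(P)\le s$, and taking the infimum over admissible $s$ gives $d_*(P)\le d_p^*(P)$.

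The substance of the proof is the middle inequality $d_{HS}(P)\le d_*(P)$, for which I would construct, for each $s>d_*(P)$, a Borel set $B$ with $P(B)=1$ and $d_H(B)\le s$. Fix such an $s$. By the identity above, $\liminf_{\delta\to0}d_\delta(P,2^{-k})<s$ for every $k\in\mathbb{N}$, so there is a scale $\delta_k$ --- and we may arrange $\delta_k\downarrow 0$ --- with $N(P,\delta_k,2^{-k})<\delta_k^{-s}$; since this covering number is a minimum over integers it is attained, by a Borel set $A_k$ with $P(A_k)\ge 1-2^{-k}$ that is covered by fewer than $\delta_k^{-s}$ balls of radius $\delta_k$. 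Set $B_m:=\bigcap_{k\ge m}A_k$. A union bound gives $P(B_m)\ge 1-\sum_{k\ge m}2^{-k}=1-2^{-(m-1)}$, and $B_m\subset A_k$ for every $k\ge m$, so $B_m$ is covered by fewer than $\delta_k^{-s}$ balls of radius $\delta_k$ for all $k\ge m$. Evaluating the lower box dimension along the subsequence $\{\delta_k\}_{k\ge m}$ then gives $d_B^-(B_m)\le\liminf_{k\to\infty}\frac{\log N(B_m,\delta_k)}{-\log\delta_k}\le s$, hence $d_H(B_m)\le d_B^-(B_m)\le s$ by the standard inequality \eqref{eq:comparison_hausdorff_box}. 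The sets $B_m$ increase with $m$, so $B:=\bigcup_m B_m$ has $P(B)=\lim_m P(B_m)=1$ and $d_H(B)=\sup_m d_H(B_m)\le s$ by countable stability of the Hausdorff dimension. Therefore $d_{HS}(P)\le d_H(B)\le s$, and letting $s\downarrow d_*(P)$ finishes the argument.

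The main obstacle is exactly this middle step: a bound on $d_\delta(P,\tau)$ controls the covering number of some $(1-\tau)$-probability set at a \emph{single} scale $\delta$, which by itself says nothing about a Hausdorff (or even a lower box) dimension. The device above gets around this by extracting one cheap scale $\delta_k$ per tolerance level $2^{-k}$, intersecting over tail indices so that each $B_m$ simultaneously enjoys cheap covers at all the scales $\delta_k$, $k\ge m$ --- which forces $d_B^-(B_m)\le s$ --- and then taking the countable union $\bigcup_m B_m$, which remains of full measure while keeping Hausdorff dimension $\le s$ precisely because Hausdorff dimension is countably stable (lower box dimension is not, which is why the chain passes through $d_{HS}$ rather than $d_{BS}^-$). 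The remaining ingredients --- monotonicity of $\tau\mapsto d_\delta(P,\tau)$, the identity $d_*(P)=\sup_{\tau>0}\liminf_\delta d_\delta(P,\tau)$, the fact that the $\delta_k$ can be taken decreasing to $0$, and $d_H\le d_B^-$ --- are routine.
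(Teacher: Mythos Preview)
Your proof is correct, but it takes a genuinely different route from the paper. The paper's own argument is a two-liner: the first inequality $d_{\mathrm{vol}}(P)\le d_{HS}(P)$ comes from the earlier comparison proposition (exactly as you say), and the remaining chain $d_{HS}(P)\le d_*(P)\le d_p^*(P)$ is simply quoted from \citet[Proposition~2]{WeedB2017} without further argument. You instead give a fully self-contained proof of those two inequalities: a direct monotonicity argument for $d_*(P)\le d_p^*(P)$, and for $d_{HS}(P)\le d_*(P)$ a nice construction that extracts one cheap scale per tolerance level, intersects to get sets $B_m$ with controlled lower box dimension, and then passes to the countable union using countable stability of Hausdorff dimension. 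What you gain is independence from the external reference and a transparent mechanism explaining \emph{why} the inequality holds; what the paper's approach gains is brevity. Your remark that the argument must route through $d_{HS}$ rather than $d_{BS}^-$ because lower box dimension is not countably stable is a genuinely informative observation that the citation-only proof obscures.
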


\begin{proof}
	
	Since $P({\rm supp}(P)\cap\mathbb{X})=P(\mathbb{X})>0$, $d_{{\rm vol}}(P)\leq d_{H}(P)$
	is direct from Proposition \ref{prop:dimension_manifold}. The inequality
	$d_{H}(P)\leq d_{*}(P)\leq d_{p}^{*}(P)$ is from \citet[Proposition 2]{WeedB2017}.
	
\end{proof}

\section{Uniform convergence on a function class}

As we have seen in \eqref{eq:kde_supremum_function_class} in Section
\ref{sec:kde}, uniform bound on the kernel density estimator $\sup_{h\geq l_{n},x\in\mathbb{X}}\bigl|\hat{p}_{h}(x)-p_{h}(x)\bigr|$
boils down to uniformly bounding on the function class $\sup_{f\in\tilde{\mathcal{F}}_{K,[l_{n},\infty)}}\left|\frac{1}{n}\sum_{i=1}^{n}f(X_{i})-\mathbb{E}[f(X)]\right|$.
In this section, we derive a uniform convergence for a more general class of
functions. Let $\mathcal{F}$ be a class of functions from $\mathbb{R}^{d}$
to $\mathbb{R}$, and consider a random variable 
\begin{equation}
\sup_{f\in\mathcal{F}}\left|\frac{1}{n}\sum_{i=1}^{n}f(X_{i})-\mathbb{E}[f(X)]\right|.\label{eq:function_iidsum}
\end{equation}
As discussed in Section \ref{sec:kde}, we combine the Talagrand inequality
(Theorem \ref{prop:function_talagrand}) and VC type bound (Theorem
\ref{prop:function_vc_bound}) to bound \eqref{eq:function_iidsum},
which is generalizing the approach in \citet[Theorem 3.1]{Sriperumbudur2012}. 

\begin{theorem} \label{thm:function_uniform}
	
	Let $(\mathbb{R}^{d},P)$ be a probability space and let $X_{1},\ldots,X_{n}$
	be i.i.d. from $P$. Let $\mathcal{F}$ be a class of functions from
	$\mathbb{R}^{d}$ to $\mathbb{R}$ that is uniformly bounded VC-class
	with dimension $\nu$, i.e. there exists positive numbers $A$,$B$
	such that, for all $f\in\mathcal{F}$, $\left\Vert f\right\Vert _{\infty}\leq B$,
	and for every probability measure $Q$ on $\mathbb{R}^{d}$ and for
	every $\epsilon\in(0,B)$, the covering number $\mathcal{N}(\mathcal{F},L_{2}(Q),\epsilon)$
	satisfies 
	\[
	\mathcal{N}(\mathcal{F},L_{2}(Q),\epsilon)\leq\left(\frac{AB}{\epsilon}\right)^{\nu}.
	\]
	Let $\sigma>0$ with $\mathbb{E}_{P}f^{2}\leq\sigma^{2}$ for all
	$f\in\mathcal{F}$. Then there exists a universal constant $C$ not
	depending on any parameters such that $\sup_{f\in\mathcal{F}}\left|\frac{1}{n}\sum_{i=1}^{n}f(X_{i})-\mathbb{E}[f(X)]\right|$
	is upper bounded with probability at least $1-\delta$, 
	\begin{align*}
	&\sup_{f\in\mathcal{F}}\left|\frac{1}{n}\sum_{i=1}^{n}f(X_{i})-\mathbb{E}[f(X)]\right|\\
	&\leq C\left(\frac{\nu B}{n}\log\left(\frac{2AB}{\sigma}\right)+\sqrt{\frac{\nu\sigma^{2}}{n}\log\left(\frac{2AB}{\sigma}\right)}+\sqrt{\frac{\sigma^{2}\log(\frac{1}{\delta})}{n}}+\frac{B\log(\frac{1}{\delta})}{n}\right).
	\end{align*}
\end{theorem}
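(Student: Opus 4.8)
The plan is the standard two-step recipe for sup-norm concentration of empirical processes indexed by a VC-class: first bound the expected supremum with the VC-type bound of Proposition~\ref{prop:function_vc_bound}, then upgrade this to a high-probability statement with the Talagrand--Bousquet inequality of Proposition~\ref{prop:function_talagrand}, and finally clean up the resulting bound with an elementary decoupling inequality. Throughout, write $Pf:=\mathbb{E}_P[f(X)]$ and $Z:=\sup_{f\in\mathcal{F}}\bigl|\tfrac1n\sum_{i=1}^n f(X_i)-Pf\bigr|$.

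The first step is to pass to the centered class $\mathcal{G}:=\{f-Pf:\,f\in\mathcal{F}\}$, so that $Z=\sup_{g\in\mathcal{G}}\bigl|\tfrac1n\sum_{i=1}^n g(X_i)\bigr|$ and every $g\in\mathcal{G}$ satisfies $P g=0$, $Pg^{2}\le Pf^{2}\le\sigma^{2}$, and $\|g\|_\infty\le 2B$ (since $|Pf|\le B$). One must then check that $\mathcal{G}$ is again a uniformly bounded VC-class: using $|Pf_1-Pf_2|=|P(f_1-f_2)|\le\|f_1-f_2\|_{L_2(P)}$, an $\epsilon$-net of $\mathcal{F}$ with respect to $L_2\bigl(\tfrac{Q+P}{2}\bigr)$ gives an $O(\epsilon)$-net of $\mathcal{G}$ with respect to $L_2(Q)$, so $\mathcal{N}(\mathcal{G},L_2(Q),\eta)\le\bigl(\tfrac{A'(2B)}{\eta}\bigr)^{\nu}$ with the same dimension $\nu$, envelope $2B$, and a constant $A'$ that is a universal multiple of $A$ (absorbed into the final $C$; the factor $2$ inside $\log(2AB/\sigma)$ is exactly the doubled envelope). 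One also records that $\mathcal{G}$ inherits the $L_\infty$-separability needed in Proposition~\ref{prop:function_talagrand}, which follows from the countable-cover structure underlying the VC condition.

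Applying Proposition~\ref{prop:function_vc_bound} to $\mathcal{G}$ yields
\[
\mathbb{E}_P[Z]\le C\Bigl(\tfrac{\nu B}{n}\log\bigl(\tfrac{2AB}{\sigma}\bigr)+\sqrt{\tfrac{\nu\sigma^{2}}{n}\log\bigl(\tfrac{2AB}{\sigma}\bigr)}\Bigr),
\]
and applying Proposition~\ref{prop:function_talagrand} to $\mathcal{G}$ (with $B$ replaced by $2B$) gives, with probability at least $1-\delta$,
\[
Z\le\mathbb{E}_P[Z]+\sqrt{\tfrac{2}{n}\log\tfrac1\delta\,\bigl(\sigma^{2}+4B\,\mathbb{E}_P[Z]\bigr)}+\tfrac{4B\log(1/\delta)}{3n}.
\]
Now split the square root via $\sqrt{a+b}\le\sqrt a+\sqrt b$ and decouple the cross term with AM--GM, $\sqrt{\tfrac{8B\log(1/\delta)}{n}\,\mathbb{E}_P[Z]}\le\mathbb{E}_P[Z]+\tfrac{2B\log(1/\delta)}{n}$, to obtain
\[
Z\le 2\,\mathbb{E}_P[Z]+\sqrt{\tfrac{2\sigma^{2}\log(1/\delta)}{n}}+\tfrac{(2+4/3)B\log(1/\delta)}{n}.
\]
Substituting the bound on $\mathbb{E}_P[Z]$ and merging all numerical factors into a single universal constant $C$ gives the stated inequality.

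The manipulations in the last step are mechanical. The one genuinely delicate point is the centering reduction: verifying that $\mathcal{F}-P\mathcal{F}$ is still a uniformly bounded VC-class with envelope $2B$, the same VC dimension, and a constant comparable to $A$, and that this centered class meets the separability and measurability hypotheses of both propositions. Everything else is bookkeeping of constants.
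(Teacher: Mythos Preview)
Your proposal is correct and follows essentially the same route as the paper: center the class, apply the Gin\'e--Guillou VC bound (Proposition~\ref{prop:function_vc_bound}) to control $\mathbb{E}_P[Z]$, then apply Talagrand--Bousquet (Proposition~\ref{prop:function_talagrand}) and decouple the cross term with $\sqrt{a+b}\le\sqrt a+\sqrt b$ and AM--GM. The one point of divergence is how the covering number of the centered class is handled. The paper embeds $\mathcal{G}$ into the larger class $\tilde{\mathcal{F}}=\{f-a:\,f\in\mathcal{F},\,a\in[-B,B]\}$ and bounds $\mathcal{N}(\tilde{\mathcal{F}},L_2(Q),\epsilon)\le(2AB/\epsilon)^{\nu+1}$ by taking the product of covers; this costs an extra $+1$ in the VC exponent but is completely elementary. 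Your measure-mixing argument---covering $\mathcal{F}$ in $L_2\bigl(\tfrac{Q+P}{2}\bigr)$ and using $|Pf_1-Pf_2|\le\|f_1-f_2\|_{L_2(P)}$---is slightly sharper (it keeps the exponent at $\nu$) and equally valid, since the VC hypothesis is assumed for \emph{every} probability measure $Q$. Either way the difference is absorbed into the universal constant, so the final statement is identical.
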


\begin{proof}[Proof of Theorem \ref{thm:function_uniform}]
	
	Let $\mathcal{G}:=\left\{ f-\mathbb{E}_{P}[f]:\,f\in\mathcal{F}\right\} $.
	Then it is immediate to check that for all $g\in\mathcal{G}$, 
	\begin{align}
	\mathbb{E}_{P}g & =\mathbb{E}_{P}f-\mathbb{E}_{P}f=0,\nonumber \\
	\mathbb{E}_{P}g^{2} & =\mathbb{E}_{P}(f-\mathbb{E}_{P}f)^{2}\leq\mathbb{E}_{P}f^{2}\leq\sigma^{2},\nonumber \\
	\left\Vert g\right\Vert _{\infty} & \leq\left\Vert f\right\Vert _{\infty}+\mathbb{E}_{P}f\leq2B.\label{eq:function_uniform_bound_onetwoinf}
	\end{align}
	Now, $\sup_{f\in\mathcal{F}}\left|\frac{1}{n}\sum_{i=1}^{n}f(X_{i})-\mathbb{E}[f(X)]\right|$
	is expanded as 
	\[
	\sup_{f\in\mathcal{F}}\left|\frac{1}{n}\sum_{i=1}^{n}f(X_{i})-\mathbb{E}[f(X)]\right|=\sup_{g\in\mathcal{G}}\left|\frac{1}{n}\sum_{i=1}^{n}g(X_{i})\right|.
	\]
	Hence from \eqref{eq:function_uniform_bound_onetwoinf}, applying
	Proposition \ref{prop:function_talagrand} to above gives the probabilistic
	bound on $\sup_{g\in\mathcal{G}}\left|\frac{1}{n}\sum_{i=1}^{n}g(X_{i})\right|$
	as 
	\begin{equation}
	P\left(\sup_{g\in\mathcal{G}}\left|\frac{1}{n}\sum_{i=1}^{n}g(X_{i})\right|<4\mathbb{E}_{P}\sup_{g\in\mathcal{G}}\left|\frac{1}{n}\sum_{i=1}^{n}g(X_{i})\right|+\sqrt{\frac{2\sigma^{2}\log(\frac{1}{\delta})}{n}}+\frac{2B\log(\frac{1}{\delta})}{n}\right)\geq1-\delta.\label{eq:function_uniform_talagrand}
	\end{equation}
	It thus remains to bound the term $\mathbb{E}_{P}\sup_{g\in\mathcal{G}}\left|\frac{1}{n}\sum_{i=1}^{n}g(X_{i})\right|$.
	Let $\tilde{\mathcal{F}}:=\left\{ f-a:\,f\in\mathcal{F},\,a\in[-B,B]\right\} $.
	Then $\mathcal{F}$ being a uniform VC-class with dimension $\nu$ implies that for all $\epsilon\in(0,B)$,
	\begin{align*}
	\sup_{P}\mathcal{N}\left(\tilde{\mathcal{F}},L_{2}(P),\epsilon\right) & \leq\sup_{P}\mathcal{N}\left(\mathcal{F},L_{2}(P),\frac{\epsilon}{2}\right)\sup_{P}\mathcal{N}\left([-B,B],|\cdot|,\frac{\epsilon}{2}\right)\\
	& \leq\left(\frac{2AB}{\epsilon}\right)^{\nu+1}.
	\end{align*}
	Hence from \eqref{eq:function_uniform_bound_onetwoinf}, applying
	Proposition \ref{prop:function_vc_bound} yields the upper bound for
	$\mathbb{E}_{P}\sup_{g\in\mathcal{G}}\left|\frac{1}{n}\sum_{i=1}^{n}g(X_{i})\right|$
	as 
	\begin{equation}
	\mathbb{E}_{P^{n}}\sup_{g\in\mathcal{G}}\left|\frac{1}{n}\sum_{i=1}^{n}g(X_{i})\right|\leq2C\left(\frac{2(\nu+1)B}{n}\log\left(\frac{2AB}{\sigma}\right)+\sqrt{\frac{(\nu+1)\sigma^{2}}{n}\log\left(\frac{2AB}{\sigma}\right)}\right).\label{eq:function_uniform_vc}
	\end{equation}
	Hence applying \eqref{eq:function_uniform_vc} to \eqref{eq:function_uniform_talagrand}
	yields that, $\sup_{f\in\mathcal{F}}\left|\frac{1}{n}\sum_{i=1}^{n}f(X_{i})-\mathbb{E}[f(X)]\right|$
	is upper bounded with probability at least $1-\delta$ as 
	\begin{align*}
	& \sup_{f\in\mathcal{F}}\left|\frac{1}{n}\sum_{i=1}^{n}f(X_{i})-\mathbb{E}[f(X)]\right| \\
	& \leq 4C\left(\frac{2(\nu+1)B}{n}\log\left(\frac{2AB}{\sigma}\right)+\sqrt{\frac{(\nu+1)\sigma^{2}}{n}\log\left(\frac{2AB}{\sigma}\right)}\right)\\
	& \quad+\sqrt{\frac{2\sigma^{2}\log(\frac{1}{\delta})}{n}}+\frac{2B\log(\frac{1}{\delta})}{n}\\
	& \leq16C\left(\frac{\nu B}{n}\log\left(\frac{2AB}{\sigma}\right)+\sqrt{\frac{\nu\sigma^{2}}{n}\log\left(\frac{2AB}{\sigma}\right)}+\sqrt{\frac{\sigma^{2}\log(\frac{1}{\delta})}{n}}+\frac{B\log(\frac{1}{\delta})}{n}\right).
	\end{align*}
\end{proof}

\section{Proof for Section \ref{sec:kde}}

Lemma \ref{lem:kde_kernel_bound_two} is shown by the calculation using integral by parts and change of variables.

\textbf{Lemma \ref{lem:kde_kernel_bound_two}.} \textit{Let $(\mathbb{R}^{d},P)$
be a probability space and let $X\sim P$. For any kernel $K$ satisfying Assumption~\ref{ass:integrable} with $k>0$,
the expectation of the $k$-moment of the kernel is upper bounded as 
\[
\mathbb{E}_{P}\left[\left|K\left(\frac{x-X}{h}\right)\right|^{k}\right]\leq C_{k,P,K,\epsilon}h^{d_{\mathrm{vol}}-\epsilon},
\]
for any $\epsilon\in (0,d_{{\rm vol}})$, where $C_{k,P,K,\epsilon}$ is a constant depending only on $k$, $P$, $K$, and $\epsilon$. Further, if $d_{{\rm vol}}=0$ or under Assumption~\ref{ass:dimension_exact}, $\epsilon$ can be $0$ in \eqref{eq:kde_kernel_bound}.
}

\begin{proof}[Proof of Lemma \ref{lem:kde_kernel_bound_two}]
	
    We first consider the case when $d_{{\rm vol}}=0$. Then $\mathbb{E}_{P}\left[\left|K\left(\frac{x-X}{h}\right)\right|^{k}\right]$
    is simply bounded as 
    \[
    \mathbb{E}_{P}\left[\left|K\left(\frac{x-X}{h}\right)\right|^{k}\right]\leq\left\Vert K\right\Vert _{\infty}^{k}h^{0}.
    \]
    
    Now, we consider the case when $d_{{\rm vol}}>0$. Fix $\epsilon\in(0,d_{{\rm vol}})$.
    Under Assumption \ref{ass:dimension_exact}, $\epsilon$ can be chosen
    to be $0$.
    
    Let $C_{k,K,d_{\mathrm{vol}},\epsilon}:=\int_{0}^{\infty}t^{d_{\mathrm{vol}}-\epsilon-1}\sup_{\left\Vert x\right\Vert \leq t}|K(x)|^{k}dt$,
    then it is finite from \eqref{eq:kde_condition_integral_finite}
    and $\left\Vert K\right\Vert _{\infty}<\infty$ in Assumption \ref{ass:kde_vc}
    as 
    \begin{align*}
    \int_{0}^{\infty}t^{d_{\mathrm{vol}}-\epsilon-1}\sup_{\left\Vert x\right\Vert \leq t}|K(x)|^{k}dt & \leq\int_{0}^{1}t^{d_{\mathrm{vol}}-\epsilon-1}\left\Vert K\right\Vert _{\infty}dt+\int_{1}^{\infty}t^{d_{{\rm vol}}-1}\sup_{\left\Vert x\right\Vert \leq t}|K(x)|^{k}dt\\
     & \leq\frac{\left\Vert K\right\Vert _{\infty}}{d_{{\rm vol}}-\epsilon}+\int_{0}^{\infty}t^{d_{{\rm vol}}-1}\sup_{\left\Vert x\right\Vert \leq t}|K(x)|^{k}dt<\infty.
    \end{align*}
    Fix $\eta>0$, and let $\tilde{K}_{\eta}:[0,\infty)\to\mathbb{R}$
    be a continuous and strictly decreasing function satisfying $\tilde{K}_{\eta}(t)>\sup_{\left\Vert x\right\Vert \geq t}|K(x)|^{k}$
    for all $t\geq0$ and $\int_{0}^{\infty}t^{d_{\mathrm{vol}}-\epsilon-1}(\tilde{K}_{\eta}(t)-\sup_{\left\Vert x\right\Vert \geq t}|K(x)|^{k})dt=\eta$.
    Such existence is possible since $t\mapsto\sup_{\left\Vert x\right\Vert \geq t}|K(x)|^{k}$
    is nonincreasing function, so have at most countable discontinuous
    points, and $\int_{0}^{\infty}t^{d_{\mathrm{vol}}-\epsilon-1}\sup_{\left\Vert x\right\Vert \leq t}|K(x)|^{k}dt<\infty$.
    Then it is immediate to check that 
    \begin{equation}
    |K(x)|^{k}<\tilde{K}_{\eta}(\left\Vert x\right\Vert )\text{ for all }x\in\mathbb{R}.\label{eq:kde_kernel_bound_upper_onedim}
    \end{equation}
    Then $\int_{0}^{\infty}t^{d_{\mathrm{vol}}-\epsilon-1}\tilde{K}(t)dt$
    can be expanded as 
    \begin{align}
    \int_{0}^{\infty}t^{d_{\mathrm{vol}}-\epsilon-1}\tilde{K}_{\eta}(t)dt & =\int_{0}^{\infty}t^{d_{\mathrm{vol}}-\epsilon-1}\sup_{\left\Vert x\right\Vert \leq t}|K(x)|^{k}dt+\int_{0}^{\infty}t^{d_{\mathrm{vol}}-\epsilon-1}(\tilde{K}_{\eta}(t)-\sup_{\left\Vert x\right\Vert \geq t}|K(x)|^{k})dt\nonumber \\
     & =C_{k,K,d_{\mathrm{vol}},\epsilon}+\eta<\infty.\label{eq:kde_kernel_bound_upper_integral}
    \end{align}
    Now since $\tilde{K}_{\eta}$ is continuous and strictly decreasing,
    change of variables $t=\tilde{K}_{\eta}(u)$ is applicable, and then
    $\mathbb{E}_{P}\left[\left|K\left(\frac{x-X}{h}\right)\right|^{k}\right]$
    can be expanded as 
    \begin{align*}
    \mathbb{E}_{P}\left[\left|K\left(\frac{x-X}{h}\right)\right|^{k}\right] & =\int_{0}^{\infty}P\left(\left|K\left(\frac{x-X}{h}\right)\right|^{k}>t\right)dt\\
     & =\int_{\infty}^{0}P\left(\left|K\left(\frac{x-X}{h}\right)\right|^{k}>\tilde{K}_{\eta}(u)\right)d\tilde{K}_{\eta}(u).
    \end{align*}
    Now, from \eqref{eq:kde_kernel_bound_upper_onedim} and $\tilde{K}_{\eta}$
    being a strictly decreasing, we can upper bound $\mathbb{E}_{P}\left[\left|K\left(\frac{x-X}{h}\right)\right|^{k}\right]$
    as 
    \begin{align*}
    \mathbb{E}_{P}\left[\left|K\left(\frac{x-X}{h}\right)\right|^{k}\right] & \leq\int_{\infty}^{0}P\left(\tilde{K}_{\eta}\left(\frac{\left\Vert x-X\right\Vert }{h}\right)>\tilde{K}_{\eta}(u)\right)d\tilde{K}_{\eta}(u)\\
     & =\int_{\infty}^{0}P\left(\frac{\left\Vert x-X\right\Vert }{h}<u\right)d\tilde{K}_{\eta}(u)\\
     & =\int_{\infty}^{0}P\left(\mathbb{B}_{\mathbb{R}^{d}}(x,hu)\right)d\tilde{K}_{\eta}(u).
    \end{align*}
    Now, from Lemma \ref{lem:dimension_volume_lower} (and \eqref{eq:dimension_exact}
    for Assumption \ref{ass:dimension_exact} case), there exists $C_{d_{{\rm vol}}-\epsilon,P}<\infty$
    with $P\left(\mathbb{B}_{\mathbb{R}^{d}}(x,r)\right)\leq C_{d_{{\rm vol}}-\epsilon,P}r^{d_{\mathrm{vol}}-\epsilon}$
    for all $x\in\mathbb{X}$ and $r>0$. Then $\mathbb{E}_{P}\left[\left|K\left(\frac{x-X}{h}\right)\right|^{k}\right]$
    is further upper bounded as 
    \begin{align}
    \mathbb{E}_{P}\left[\left|K\left(\frac{x-X}{h}\right)\right|^{k}\right] & \leq\int_{\infty}^{0}C_{d_{{\rm vol}}-\epsilon,P}(hu)^{d_{\mathrm{vol}}-\epsilon}d\tilde{K}(u)\nonumber \\
     & =C_{d_{{\rm vol}}-\epsilon,P}h^{d_{\mathrm{vol}}-\epsilon}\int_{\infty}^{0}u^{d_{\mathrm{vol}}-\epsilon}d\tilde{K}(u).\label{eq:kde_kernel_bound_upperbound_stieltjes}
    \end{align}
    Now, $\int_{\infty}^{0}u^{d_{\mathrm{vol}}-\epsilon}d\tilde{K}(u)$
    can be computed using integration by part. Note first that $\int_{0}^{\infty}t^{d_{\mathrm{vol}}-\epsilon-1}\tilde{K}(t)dt<\infty$
    implies 
    \[
    \lim_{t\to\infty}t^{d_{\mathrm{vol}}-\epsilon}\tilde{K}(t)=0.
    \]
    To see this, note that $t^{d_{\mathrm{vol}}-\epsilon}\tilde{K}(t)$
    is expanded as 
    \[
    t^{d_{\mathrm{vol}}-\epsilon}\tilde{K}(t)=\int_{0}^{t}u^{d_{\mathrm{vol}}-\epsilon}d\tilde{K}(u)+\int_{0}^{t}(d_{\mathrm{vol}}-\epsilon)u^{d_{\mathrm{vol}}-\epsilon-1}\tilde{K}(u)du,
    \]
    then $\int_{0}^{\infty}(d_{\mathrm{vol}}-\epsilon)u^{d_{\mathrm{vol}}-\epsilon-1}\tilde{K}(u)du<\infty$
    and $\int_{0}^{t}u^{d_{\mathrm{vol}}-\epsilon}d\tilde{K}(u)$ being
    monotone function of $t$ imply that $\lim_{t\to\infty}t^{d_{\mathrm{vol}}-\epsilon}\tilde{K}(t)$
    exists. Now, suppose $\lim_{t\to\infty}t^{d_{\mathrm{vol}}-\epsilon}\tilde{K}(t)=a>0$,
    then we can choose $t_{0}>0$ such that $t^{d_{\mathrm{vol}}-\epsilon}\tilde{K}(t)>\frac{a}{2}$
    for all $t\geq t_{0}$, and then 
    \[
    \infty>\int_{0}^{\infty}t^{d_{\mathrm{vol}}-\epsilon-1}\tilde{K}(t)dt\geq\int_{t_{0}}^{\infty}t^{d_{\mathrm{vol}}-\epsilon-1}\tilde{K}(t)dt\geq\frac{a}{2}\int_{t_{0}}^{\infty}t^{-1}dt=\infty,
    \]
    which is a contradiction. Hence $\lim_{t\to\infty}t^{d_{\mathrm{vol}}-\epsilon}\tilde{K}(t)=0$.
    Now, applying integration by part to $\int_{\infty}^{0}u^{d_{\mathrm{vol}}-\epsilon}d\tilde{K}(u)$
    with $d_{{\rm vol}}-\epsilon>0$ gives 
    \begin{align}
    \int_{\infty}^{0}u^{d_{\mathrm{vol}}-\epsilon}d\tilde{K}(u) & =\left[u^{d_{\mathrm{vol}}-\epsilon}\tilde{K}(u)\right]_{\infty}^{0}-\int_{\infty}^{0}(d_{\mathrm{vol}}-\epsilon)u^{d_{\mathrm{vol}}-\epsilon-1}\tilde{K}(u)du\nonumber \\
    & =\int_{0}^{\infty}(d_{\mathrm{vol}}-\epsilon)u^{d_{\mathrm{vol}}-\epsilon-1}\tilde{K}(u)du.\label{eq:kde_kernel_bound_integral_part}
    \end{align}
    Then applying \eqref{eq:kde_kernel_bound_upper_integral} and \eqref{eq:kde_kernel_bound_integral_part}
    to \eqref{eq:kde_kernel_bound_upperbound_stieltjes} gives an upper
    bound for $\mathbb{E}_{P}\left[\left|K\left(\frac{x-X}{h}\right)\right|^{k}\right]$
    as 
    \begin{equation}
    \mathbb{E}_{P}\left[\left|K\left(\frac{x-X}{h}\right)\right|^{k}\right]\leq C_{d_{{\rm vol}}-\epsilon,P}(d_{\mathrm{vol}}-\epsilon)h^{d_{\mathrm{vol}}-\epsilon}(C_{k,K,d_{\mathrm{vol}},\epsilon}+\eta).\label{eq:kde_kernel_bound_upperbound_epsilon}
    \end{equation}
    And then note that RHS of \eqref{eq:kde_kernel_bound_upperbound_epsilon}
    holds for any $\eta>0$, and hence $\mathbb{E}_{P}\left[\left|K\left(\frac{x-X}{h}\right)\right|^{k}\right]$
    is further upper bounded as 
    \begin{align*}
    \mathbb{E}_{P}\left[\left|K\left(\frac{x-X}{h}\right)\right|^{k}\right] & \leq\inf_{\eta>0}\left\{ C_{d_{{\rm vol}}-\epsilon,P}(d_{\mathrm{vol}}-\epsilon)h^{d_{\mathrm{vol}}-\epsilon}(C_{k,K,d_{\mathrm{vol}},\epsilon}+\eta)\right\} \\
     & =C_{d_{{\rm vol}}-\epsilon,P}(d_{\mathrm{vol}}-\epsilon)C_{k,K,d_{\mathrm{vol}},\epsilon}h^{d_{\mathrm{vol}}-\epsilon}\\
     & =C_{k,P,K,\epsilon}h^{d_{{\rm vol}}-\epsilon},
    \end{align*}
    where $C_{k,P,K,\epsilon}=C_{d_{{\rm vol}}-\epsilon,P}(d_{\mathrm{vol}}-\epsilon)C_{k,K,d_{\mathrm{vol}},\epsilon}$.

\end{proof}

\subsection{Proof for Section \ref{subsec:kde_band_ray}}

Theorem \ref{thm:kde_band_ray_uniform} follows from applying Theorem \ref{thm:function_uniform}.

\textbf{Theorem \ref{thm:kde_band_ray_uniform}.} \textit{
	Let $P$ be a probability distribution and let $K$ be a kernel function
	satisfying Assumption~\ref{ass:integrable} and~\ref{ass:kde_vc}.
	Then, with probability at least $1-\delta$, 
	\[
	\sup_{h\geq l_{n},x\in\mathbb{X}}\left|\hat{p}_{h}(x)-p_{h}(x)\right|
	\leq C\left(\frac{\left(\log\left(1 / l_{n}\right)\right)_{+}}{nl_{n}^{d}}+\sqrt{\frac{\left(\log\left(1 / l_{n}\right)\right)_{+}}{nl_{n}^{2d-d_{\mathrm{vol}}+\epsilon}}}
	+\sqrt{\frac{\log\left(2 / \delta\right)}{nl_{n}^{2d-d_{\mathrm{vol}}+\epsilon}}}+\frac{\log\left(2  / \delta\right)}{nl_{n}^{d}}\right),
	\]
	for any $\epsilon\in (0,d_{{\rm vol}})$, where $C$ is a constant depending only on $A$, $\left\Vert K\right\Vert _{\infty}$, $d$,
	$\nu$, $d_{\mathrm{vol}}$, $C_{k=2,P,K,\epsilon}$, $\epsilon$. Further, if $d_{{\rm vol}}=0$ or under Assumption~\ref{ass:dimension_exact}, $\epsilon$ can be $0$ in \eqref{eq:kde_band_ray_uniform_bound}.
}

\begin{proof}[Proof of Theorem \ref{thm:kde_band_ray_uniform}]
	
	For $x\in\mathbb{X}$ and $h\geq l_{n}$, let $K_{x,h}:\mathbb{R}^{d}\to\mathbb{R}$
	be $K_{x,h}(\cdot)=K\left(\frac{x-\cdot}{h}\right)$, and let
	$\tilde{\mathcal{F}}_{K,[l_{n},\infty)}:=\left\{ \frac{1}{h^{d}}K_{x,h}:\,x\in\mathbb{X},h\geq l_{n}\right\} $
	be a class of normalized kernel functions centered on $\mathbb{X}$
	and bandwidth in $[l_{n},\infty)$. Note that $\hat{p}_{h}(x)-p_{h}(x)$
	can be expanded as 
	\[
	\hat{p}_{h}(x)-p_{h}(x)=\frac{1}{nh^{d}}\sum_{i=1}^{n}K\left(\frac{x-X_{i}}{h}\right)-\mathbb{E}_{P}\left[\frac{1}{h^{d}}K\left(\frac{x-X_{i}}{h}\right)\right]=\frac{1}{n}\sum_{i=1}^{n}\frac{1}{h^{d}}K_{x,h}(X_{i})-\mathbb{E}_{P}\left[\frac{1}{h^{d}}K_{x,h}\right].
	\]
	Hence $\sup_{h\geq l_{n},x\in\mathbb{X}}\left|\hat{p}_{h}(x)-p_{h}(x)\right|$ can be
	expanded as 
	\begin{equation}
	\sup_{h\geq l_{n},x\in\mathbb{X}}\left|\hat{p}_{h}(x)-p_{h}(x)\right| =\sup_{f\in\tilde{\mathcal{F}}_{K,[l_{n},\infty)}}\left|\frac{1}{n}\sum_{i=1}^{n}f(X_{i})-\mathbb{E}_{P}\left[f(X)\right]\right|.\label{eq:kde_uniform_expansion}
	\end{equation}
	Now, it is immediate to check that 
	\begin{equation}
	\left\Vert f\right\Vert _{\infty}\leq l_{n}^{-d}\left\Vert K\right\Vert _{\infty}.\label{eq:kde_uniform_bound_inf}
	\end{equation}
	For bounding the VC dimension of $\tilde{\mathcal{F}}_{K,[l_{n},\infty)}$,
	consider $\mathcal{F}_{K,[l_{n},\infty)}:=\left\{ K_{x,h}:\,x\in\mathbb{X},h\geq l_{n}\right\} $
	be a class of unnormalized kernel functions centered on $\mathbb{X}$
	and bandwidth in $[l_{n},\infty)$. Fix $\eta<l_{n}^{-d}\left\Vert K\right\Vert _{\infty}$
	and a   probability measure $Q$ on $\mathbb{R}^{d}$. Suppose $\left[l_{n},\left(\frac{\eta}{2\left\Vert K\right\Vert _{\infty}}\right)^{-1/d}\right]$
	is covered by balls $\Bigl\{ \Bigl(h_{i}-\frac{l_{n}^{d+1}\eta}{2d\left\Vert K\right\Vert _{\infty}},h_{i}+\frac{l_{n}^{d+1}\eta}{2d\left\Vert K\right\Vert _{\infty}}\Bigr):\,1\leq i\leq N_{1}\Bigr\} $
	and $(\mathcal{F}_{K,[l_{n},\infty)},L_{2}(Q))$ is covered by balls
	$\left\{ \mathbb{B}_{L_{2}(Q)}\left(f_{j},\frac{l_{n}^{d}\eta}{2}\right):\,1\leq j\leq N_{2}\right\} $,
	and let $f_{i,j}:=h_{i}^{-d}f_{j}$ for $1\leq i\leq N_{1}$ and $1\leq j\leq N_{2}$.
	Also, choose $h_{0}>\left(\frac{\eta}{2\left\Vert K\right\Vert _{\infty}}\right)^{-1/d}$,
	$x_{0}\in\mathbb{X}$, and let $f_{0}=\frac{1}{h_{0}^{d}}K_{x_{0},h_{0}}$.
	We will show that 
	\begin{equation}
	\left\{ \mathbb{B}_{L_{2}(Q)}\left(f_{i,j},\eta\right):\,1\leq i\leq N_{1},\,1\leq j\leq N_{2}\right\} \cup\left\{ \mathbb{B}_{L_{2}(Q)}\left(f_{0},\eta\right)\right\} \text{ covers }\tilde{\mathcal{F}}_{K,[l_{n},\infty)}.\label{eq:kde_uniform_covering}
	\end{equation}
	For the first case when $h\leq\left(\frac{\eta}{\left\Vert K\right\Vert _{\infty}}\right)^{-1/d}$,
	find $h_{i}$ and $f_{j}$ with $h\in\left(h_{i}-\frac{l_{n}^{d+1}\eta}{2d\left\Vert K\right\Vert _{\infty}},h_{i}+\frac{l_{n}^{d+1}\eta}{2d\left\Vert K\right\Vert _{\infty}}\right)$
	and $K_{x,h}\in\mathbb{B}_{L_{2}(Q)}\left(f_{j},\frac{l_{n}^{d}\eta}{2}\right)$.
	Then the distance between $\frac{1}{h^{d}}K_{x,h}$ and $\frac{1}{h_{i}^{d}}f_{j}$
	is upper bounded as 
	\begin{equation}
	\left\Vert \frac{1}{h^{d}}K_{x,h}-\frac{1}{h_{i}^{d}}f_{j}\right\Vert _{L_{2}(Q)}\leq\left\Vert \frac{1}{h^{d}}K_{x,h}-\frac{1}{h_{i}^{d}}K_{x,h}\right\Vert _{L_{2}(Q)}+\left\Vert \frac{1}{h_{i}^{d}}K_{x,h}-\frac{1}{h_{i}^{d}}f_{j}\right\Vert _{L_{2}(Q)}.\label{eq:kde_uniform_covering_decomposition}
	\end{equation}
	Now, the first term of \eqref{eq:kde_uniform_covering_decomposition}
	is upper bounded as 
	\begin{align}
	\left\Vert \frac{1}{h^{d}}K_{x,h}-\frac{1}{h_{i}^{d}}K_{x,h}\right\Vert _{L_{2}(Q)} & =\left|\frac{1}{h^{d}}-\frac{1}{h_{i}^{d}}\right|\left\Vert K_{x,h}\right\Vert _{L_{2}(Q)}\nonumber \\
	& =\left|h_{i}-h\right|\sum_{k=0}^{d-1}h_{i}^{k-d}h^{-1-k}\left\Vert K_{x,h}\right\Vert _{L_{2}(Q)}\nonumber \\
	& \leq\left|h_{i}-h\right|dl_{n}^{-d-1}\left\Vert K\right\Vert _{\infty}<\frac{\eta}{2}.\label{eq:kde_uniform_covering_decomposition_first}
	\end{align}
	Also, the second term of \eqref{eq:kde_uniform_covering_decomposition}
	is upper bounded as 
	\begin{align}
	\left\Vert \frac{1}{h_{i}^{d}}K_{x,h}-\frac{1}{h_{i}^{d}}f_{j}\right\Vert _{L_{2}(Q)} & =\frac{1}{h_{i}^{d}}\left\Vert K_{x,h}-f_{j}\right\Vert _{L_{2}(Q)}\nonumber \\
	& \leq l_{n}^{-d}\left\Vert K_{x,h}-f_{j}\right\Vert _{L_{2}(Q)}<\frac{\eta}{2}.\label{eq:kde_uniform_covering_decomposition_second}
	\end{align}
	Hence applying \eqref{eq:kde_uniform_covering_decomposition_first}
	and \eqref{eq:kde_uniform_covering_decomposition_second} to \eqref{eq:kde_uniform_covering_decomposition}
	gives 
	\[
	\left\Vert \frac{1}{h^{d}}K_{x,h}-\frac{1}{h_{i}^{d}}f_{j}\right\Vert _{L_{2}(Q)}<\eta.
	\]
	For the second case when $h>\left(\frac{\eta}{2\left\Vert K\right\Vert _{\infty}}\right)^{-1/d}$,
	$\left\Vert \frac{1}{h^{d}}K_{x,h}\right\Vert _{L_{2}(Q)}\leq\left\Vert \frac{1}{h^{d}}K_{x,h}\right\Vert _{\infty}<\frac{\eta}{2}$
	holds, and hence 
	\[
	\left\Vert \frac{1}{h^{d}}K_{x,h}-f_{0}\right\Vert _{L_{2}(Q)}\leq\left\Vert \frac{1}{h^{d}}K_{x,h}\right\Vert _{L_{2}(Q)}+\left\Vert f_{0}\right\Vert _{L_{2}(Q)}<\eta.
	\]
	Therefore, \eqref{eq:kde_uniform_covering} is shown. Hence combined
	with Assumption \ref{ass:kde_vc} gives that for every probability
	measure $Q$ on $\mathbb{R}^{d}$ and for every $\eta\in(0,h^{-d}\left\Vert K\right\Vert _{\infty})$,
	the covering number $\mathcal{N}(\tilde{\mathcal{F}}_{K,[l_{n},\infty)},L_{2}(Q),\eta)$
	is upper bounded as 
	\begin{align}
	& \sup_{Q}\mathcal{N}(\tilde{\mathcal{F}}_{K,[l_{n},\infty)},L_{2}(Q),\eta) \nonumber\\
	& \leq\mathcal{N}\left(\left[l_{n},\left(\frac{\eta}{2\left\Vert K\right\Vert _{\infty}}\right)^{-1/d}\right],|\cdot|,\frac{l_{n}^{d+1}\eta}{2d\left\Vert K\right\Vert _{\infty}}\right)\sup_{Q}\mathcal{N}\left(\mathcal{F}_{K,[l_{n},\infty)},L_{2}(Q),\frac{l_{n}^{d}\eta}{2}\right)+1\nonumber \\
	& \leq\frac{2d\left\Vert K\right\Vert _{\infty}}{l_{n}^{d+1}\eta}\left(\frac{2\left\Vert K\right\Vert _{\infty}}{\eta}\right)^{1/d}\left(\frac{2A\left\Vert K\right\Vert _{\infty}}{l_{n}^{d}\eta}\right)^{\nu}+1\nonumber \\
	& \leq\left(\frac{2Ad\left\Vert K\right\Vert _{\infty}}{l_{n}^{d}\eta}\right)^{\nu+2}.\label{eq:kde_uniform_bound_vc}
	\end{align}
	Also, Lemma \ref{lem:kde_kernel_bound_two} implies that under Assumption
	\ref{ass:integrable}, for any $\epsilon \in (0, d_{\rm{vol}})$ (and $\epsilon$ can be $0$ if $d_{\rm{vol}}=0$ or under Assumption \ref{ass:dimension_exact}), 
	\begin{equation}
	\mathbb{E}_{P}\left[\left(\frac{1}{h^{d}}K_{x,h}\right)^{2}\right]\leq C_{k=2,P,K,\epsilon}l_{n}^{-2d+d_{\mathrm{vol}}-\epsilon}.\label{eq:kde_uniform_bound_two}
	\end{equation}
	Hence from \eqref{eq:kde_uniform_bound_inf}, \eqref{eq:kde_uniform_bound_vc},
	and \eqref{eq:kde_uniform_bound_two}, applying Theorem \ref{thm:function_uniform}
	to \eqref{eq:kde_uniform_expansion} gives that $\sup_{h\geq l_{n},x\in\mathbb{X}}\left|\hat{p}_{h}(x)-p_{h}(x)\right|$
	is upper bounded with probability at least $1-\delta$ as 
	\begin{align*}
	& \sup_{h\geq l_{n},x\in\mathbb{X}}\left|\hat{p}_{h}(x)-p_{h}(x)\right|\\
	& \leq C\left(\frac{2(\nu+2)\left\Vert K\right\Vert _{\infty}\log\left(\frac{2Ad\left\Vert K\right\Vert _{\infty}}{\sqrt{C_{k=2,P,K,\epsilon}}l_{n}^{(d_{\mathrm{vol}}-\epsilon)/2}}\right)}{nl_{n}^{d}}+\sqrt{\frac{2(\nu+2)C_{k=2,P,K,\epsilon}\log\left(\frac{2Ad\left\Vert K\right\Vert _{\infty}}{\sqrt{C_{k=2,P,K,\epsilon}}l_{n}^{(d_{\mathrm{vol}}-\epsilon)/2}}\right)}{nl_{n}^{2d-d_{\mathrm{vol}}+\epsilon}}}\right.\\
	& \qquad\qquad+\left.\sqrt{\frac{C_{k=2,P,K,\epsilon}\log(\frac{1}{\delta})}{nl_{n}^{2d-d_{\mathrm{vol}}+\epsilon}}}+\frac{\left\Vert K\right\Vert _{\infty}\log(\frac{1}{\delta})}{nl_{n}^{d}}\right)\\
	& \leq C_{A,\left\Vert K\right\Vert _{\infty},d,\nu,d_{\mathrm{vol}},C_{k=2,P,K,\epsilon}}\left(\frac{\left(\log\left(\frac{1}{l_{n}}\right)\right)_{+}}{nl_{n}^{d}}+\sqrt{\frac{\left(\log\left(\frac{1}{l_{n}}\right)\right)_{+}}{nl_{n}^{2d-d_{\mathrm{vol}}+\epsilon}}}+\sqrt{\frac{\log\left(\frac{2}{\delta}\right)}{nl_{n}^{2d-d_{\mathrm{vol}}+\epsilon}}}+\frac{\log\left(\frac{2}{\delta}\right)}{nl_{n}^{d}}\right),
	\end{align*}
	where $C_{A,\left\Vert K\right\Vert _{\infty},d,\nu,d_{\mathrm{vol}},C_{k=2,P,K,\epsilon},\epsilon}$
	depends only on $A$, $\left\Vert K\right\Vert _{\infty}$, $d$,
	$\nu$, $d_{\mathrm{vol}}$, $C_{k=2,P,K,\epsilon}$, $\epsilon$.
	
\end{proof}

Then Corollary \ref{cor:kde_band_ray_uniform_probconv} is just simplifying the result in Theorem \ref{thm:kde_band_ray_uniform}. 

\textbf{Corollary \ref{cor:kde_band_ray_uniform_probconv}.} \textit{
Let $P$ be a probability distribution and let $K$ be a kernel function
satisfying Assumption~\ref{ass:integrable} and~\ref{ass:kde_vc}.
Fix $\epsilon\in(0,d_{\rm{vol}})$. Further, if $d_{{\rm vol}}=0$ or under Assumption~\ref{ass:dimension_exact}, $\epsilon$ can be $0$.
Suppose 
\[
\limsup_{n}\frac{\left(\log\left(1 / \ell_{n}\right)\right)_{+}+\log\left(2 / \delta\right)}{n\ell_{n}^{d_{\mathrm{vol}}-\epsilon}}<\infty.
\]
Then, with probability at least $1-\delta$, 
\[
\sup_{h\geq l_{n},x\in\mathbb{X}}\left|\hat{p}_{h}(x)-p_{h}(x)\right|
\leq C'\sqrt{\frac{(\log(\frac{1}{l_{n}}))_{+}+\log(\frac{2}{\delta})}{nl_{n}^{2d-d_{\mathrm{vol}}+\epsilon}}},
\]
where $C'$
depending only on $A$, $\left\Vert K\right\Vert _{\infty}$, $d$,
$\nu$, $d_{\mathrm{vol}}$, $C_{k=2,P,K,\epsilon}$, $\epsilon$.
}

\begin{proof}[Proof of Corollary \ref{cor:kde_band_ray_uniform_probconv}]
	
	From \eqref{eq:kde_band_ray_uniform_bound} in Theorem \ref{thm:kde_band_ray_uniform},
	$\sup_{h\geq l_{n},x\in\mathbb{X}}\left|\hat{p}_{h}(x)-p_{h}(x)\right|$
	is upper bounded with probability at least $1-\delta$ as 
	\begin{align*}
	& \sup_{h\geq l_{n},x\in\mathbb{X}}\left|\hat{p}_{h}(x)-p_{h}(x)\right|\\
	& \leq C_{A,\left\Vert K\right\Vert _{\infty},d,\nu,d_{\mathrm{vol}},C_{k=2,P,K,\epsilon},\epsilon}\left(\frac{\left(\log\left(\frac{1}{l_{n}}\right)\right)_{+}}{nl_{n}^{d}}+\sqrt{\frac{\left(\log\left(\frac{1}{l_{n}}\right)\right)_{+}}{nl_{n}^{2d-d_{\mathrm{vol}}+\epsilon}}}+\sqrt{\frac{\log\left(\frac{2}{\delta}\right)}{nl_{n}^{2d-d_{\mathrm{vol}+\epsilon}+\epsilon}}}+\frac{\log\left(\frac{2}{\delta}\right)}{nl_{n}^{d}}\right)\\
	& =C_{A,\left\Vert K\right\Vert _{\infty},d,\nu,d_{\mathrm{vol}},C_{k=2,P,K,\epsilon},\epsilon}\\
	& \quad \times\left(\sqrt{\frac{\left(\log\left(\frac{1}{l_{n}}\right)\right)_{+}}{nl_{n}^{2d-d_{\mathrm{vol}}+\epsilon}}}\left(\sqrt{\frac{\left(\log\left(\frac{1}{l_{n}}\right)\right)_{+}}{nl_{n}^{d_{\mathrm{vol}}-\epsilon}}}+1\right)+\sqrt{\frac{\log\left(\frac{2}{\delta}\right)}{nl_{n}^{2d-d_{\mathrm{vol}}+\epsilon}}}\left(\sqrt{\frac{\log\left(\frac{2}{\delta}\right)}{nl_{n}^{d_{\mathrm{vol}}-\epsilon}}}+1\right)\right).
	\end{align*}
	
	Then from $\lim\sup_{n}\frac{\left(\log\left(\frac{1}{l_{n}}\right)\right)_{+}+\log\left(\frac{2}{\delta}\right)}{nl_{n}^{d_{\mathrm{vol}}-\epsilon}}<\infty$,
	there exists some constant $C'$ with $\left(\log\left(\frac{1}{l_{n}}\right)\right)_{+}+\log\left(\frac{2}{\delta}\right)\leq C'nl_{n}^{d_{\mathrm{vol}}+\epsilon}$.
	And hence $\sup_{h\geq l_{n},x\in\mathbb{X}}\left|\hat{p}_{h}(x)-p_{h}(x)\right|$
	is upper bounded with probability $1-\delta$ as 
	\begin{align*}
	& \sup_{h\geq l_{n},x\in\mathbb{X}}\left|\hat{p}_{h}(x)-p_{h}(x)\right|\\
	& \leq C_{A,\left\Vert K\right\Vert _{\infty},d,\nu,d_{\mathrm{vol}},C_{k=2,P,K,\epsilon},\epsilon}\left(\sqrt{\frac{\left(\log\left(\frac{1}{l_{n}}\right)\right)_{+}}{nl_{n}^{2d-d_{\mathrm{vol}}+\epsilon}}}\left(\sqrt{C'}+1\right)+\sqrt{\frac{\log\left(\frac{1}{\delta}\right)}{nl_{n}^{2d-d_{\mathrm{vol}}+\epsilon}}}\left(\sqrt{C'}+1\right)\right)\\
	& \leq C'_{A,\left\Vert K\right\Vert _{\infty},d,\nu,d_{\mathrm{vol}},C_{k=2,P,K,\epsilon},\epsilon}\sqrt{\frac{\left(\log\left(\frac{1}{l_{n}}\right)\right)_{+}+\log\left(\frac{1}{\delta}\right)}{nl_{n}^{2d-d_{\mathrm{vol}}+\epsilon}}},
	\end{align*}
	where $C'_{A,\left\Vert K\right\Vert _{\infty},d,\nu,d_{\mathrm{vol}},C_{k=2,P,K,\epsilon},\epsilon}$
	depending only on $A$, $\left\Vert K\right\Vert _{\infty}$, $d$,
	$\nu$, $d_{\mathrm{vol}}$, $C_{k=2,P,K,\epsilon}$, $\epsilon$.
	
\end{proof}

\subsection{Proof for Section \ref{subsec:kde_band_one}}

Lemma \ref{lem:kde_vc} is by covering $\mathbb{X}$ and then using the Lipschitz property of the kernel function $K$.

\textbf{Lemma \ref{lem:kde_vc}.} \textit{
	Suppose there exists $R>0$ with $\mathbb{X}\subset\mathbb{B}_{\mathbb{R}^{d}}(0,R)$. Let the kernel $K$ is $M_K$-Lipschitz continuous. Then for all $\eta\in\left(0,\left\Vert K\right\Vert _{\infty}\right)$,
	the supremum of the $\eta$-covering number $\mathcal{N}(\mathcal{F}_{K,h},L_{2}(Q),\eta)$
	over all measure $Q$ is upper bounded as 
	\[
	\sup_{Q}\mathcal{N}(\mathcal{F}_{K,h},L_{2}(Q),\eta)\leq\left(\frac{2RM_{K}h^{-1}+\left\Vert K\right\Vert _{\infty}}{\eta}\right)^{d}.
	\]
}

\begin{proof}[Proof of Lemma \ref{lem:kde_vc}]
	
	For fixed $\eta>0$, let $x_{1},\ldots,x_{M}$ be the maximal
	$\eta$-covering of $\mathbb{B}_{\mathbb{R}^{d}}(0,R)$, with
	$M=\mathcal{M}\left(\mathbb{B}_{\mathbb{R}^{d}}(0,R),\left\Vert \cdot\right\Vert _{2},\eta\right)$
	being the packing number of $\mathbb{B}_{\mathbb{R}^{d}}(0,R)$. Then
	$\mathbb{B}_{\mathbb{R}^{d}}(x_{i},\eta)$ and $\mathbb{B}_{\mathbb{R}^{d}}(x_{j},\eta)$
	do not intersect for any $i,j$ and $\bigcup_{i=1}^{M}\mathbb{B}_{\mathbb{R}^{d}}(x_{i},\eta)\subset\mathbb{B}_{\mathbb{R}^{d}}(x_{i},R+\eta)$,
	and hence 
	\begin{equation}
	\sum_{i=1}^{M}\lambda_{d}\left(\mathbb{B}_{\mathbb{R}^{d}}(x_{i},\eta)\right)\leq\lambda_{d}\left(\mathbb{B}_{\mathbb{R}^{d}}(x_{i},R+\eta)\right).\label{eq:kde_vc_support_probability}
	\end{equation}
	Then $\lambda_{d}\left(\mathbb{B}_{\mathbb{R}^{d}}(x,r)\right)=r^{d}\lambda_{d}\left(\mathbb{B}_{\mathbb{R}^{d}}(0,1)\right)$
	gives the upper bound on $\mathcal{M}(\mathbb{B}_{\mathbb{R}^{d}}(0,R),\left\Vert \cdot\right\Vert_{2} ,\eta)$
	as 
	\[
	\mathcal{M}\left(\mathbb{B}_{\mathbb{R}^{d}}(0,R),\left\Vert \cdot\right\Vert _{2},\eta\right)\leq\left(1+\frac{R}{\eta}\right)^{d}.
	\]
	Then $\mathbb{X}\subset\mathbb{B}_{\mathbb{R}^{d}}(0,R)$ and the
	relationship between covering number and packing number gives the
	upper bound on the covering number $\mathcal{N}\left(\mathbb{X},\left\Vert \cdot\right\Vert _{2},\eta\right)$
	as 
	\begin{equation}
	\mathcal{N}\left(\mathbb{X},\left\Vert \cdot\right\Vert _{2},\eta\right)\leq\mathcal{N}\left(\mathbb{B}_{\mathbb{R}^{d}}(0,R),\left\Vert \cdot\right\Vert _{2},\eta\right)\leq\mathcal{M}\left(\mathbb{B}_{\mathbb{R}^{d}}(0,R),\left\Vert \cdot\right\Vert _{2},\frac{\eta}{2}\right)\leq\left(1+\frac{2R}{\eta}\right)^{d}.\label{eq:kde_vc_support_covering}
	\end{equation}
	Now, note that for all $x,y\in\mathbb{X}$ and for all $z\in\mathbb{R}^{d}$,
	$\left|K_{x,h}(z)-K_{y,h}(z)\right|$ is upper bounded as 
	\[
	\left|K_{x,h}(z)-K_{y,h}(z)\right|=\left|K\left(\frac{x-z}{h}\right)-K\left(\frac{y-z}{h}\right)\right|\leq\frac{M_{K}}{h}\left\Vert (x-z)-(y-z)\right\Vert _{2}=\frac{M_{K}}{h}\left\Vert x-y\right\Vert _{2}.
	\]
	Hence for any measure $Q$ on $\mathbb{R}^{d}$, $\left\Vert K_{x,h}-K_{y,h}\right\Vert _{L_{2}(Q)}$
	is upper bounded as 
	\[
	\left\Vert K_{x,h}-K_{y,h}\right\Vert _{L_{2}(Q)}=\sqrt{\int(K_{x,h}(z)-K_{y,h}(z))^{2}dQ(z)}\leq\frac{M_{K}}{h}\left\Vert x-y\right\Vert _{2}.
	\]
	Hence applying this to \eqref{eq:kde_vc_support_covering} implies
	that for all $\eta>0$, the supremum of the covering number $\mathcal{N}(\mathcal{F}_{K,h},L_{2}(Q),\eta)$
	over all measure $Q$ is upper bounded as 
	\[
	\sup_{Q}\mathcal{N}(\mathcal{F}_{K,h},L_{2}(Q),\eta)\leq\mathcal{N}\left(\mathbb{X},\left\Vert \cdot\right\Vert _{2},\frac{h\eta}{M_{K}}\right)\leq\left(1+\frac{2RM_{K}}{h\eta}\right)^{d}.
	\]
	Hence for all $\eta\in\left(0,\left\Vert K\right\Vert _{\infty}\right)$,
	\[
	\sup_{Q}\mathcal{N}(\mathcal{F}_{K,h},L_{2}(Q),\eta)\leq\left(\frac{2RM_{K}h^{-1}+\left\Vert K\right\Vert _{\infty}}{\eta}\right)^{d}.
	\]
	
\end{proof}

Then Corollary \ref{cor:kde_uniform_band_one_probconv} follows from applying Theorem \ref{thm:function_uniform} with bounding the covering number from Lemma \ref{lem:kde_vc}.

\textbf{Corollary \ref{cor:kde_uniform_band_one_probconv}.} \textit{
	Suppose there exists $R>0$ with $\mathbb{X}\subset\mathbb{B}_{\mathbb{R}^{d}}(0,R)$. Let $K$ be a $M_K$-Lipschitz continuous kernel function
	satisfying Assumption~\ref{ass:integrable}.
	Fix $\epsilon\in(0,d_{\rm{vol}})$. Further, if $d_{{\rm vol}}=0$ or under Assumption~\ref{ass:dimension_exact}, $\epsilon$ can be $0$. Suppose
	\[
	\limsup_{n}\frac{\left(\log\left(1 / h_{n}\right)\right)_{+}+\log\left(2 / \delta\right)}{nh_{n}^{d_{\mathrm{vol}}-\epsilon}}<\infty.
	\]
	Then  with probability at least $1-\delta$,
	\[
	\sup_{x\in\mathbb{X}}\left|\hat{p}_{h_{n}}(x)-p_{h_{n}}(x)\right|
	\leq C''\sqrt{\frac{(\log(\frac{1}{ h_{n}}))_{+}+\log(\frac{2}{\delta})}{nh_{n}^{2d-d_{\mathrm{vol}}+\epsilon}}},
	\]
	where $C''$ is a constant depending only on $R$, $M_{K}$, $\left\Vert K\right\Vert _{\infty}$,
	$d$, $\nu$, $d_{\mathrm{vol}}$, $C_{k=2,P,K,\epsilon}$, $\epsilon$.
}

\begin{proof}[Proof of Corollary \ref{cor:kde_uniform_band_one_probconv}]	
	For $x\in\mathbb{X}$, let $K_{x,h}:\mathbb{R}^{d}\to\mathbb{R}$
	be $K_{x,h}(\cdot)=K\left(\frac{x-\cdot}{h}\right)$, and let $\tilde{\mathcal{F}}_{K,h}:=\bigl\{ \frac{1}{h^{d}}K_{x,h}:\,x\in\mathbb{X}\bigr\} $
	be a class of normalized kernel functions centered on $\mathbb{X}$
	and bandwidth $h$. Note that $\hat{p}_{h}(x)-p_{h}(x)$ can be expanded
	as 
	\[
	\hat{p}_{h}(x)-p_{h}(x)=\frac{1}{nh^{d}}\sum_{i=1}^{n}K\left(\frac{x-X_{i}}{h}\right)-\mathbb{E}_{P}\left[\frac{1}{h^{d}}K\left(\frac{x-X_{i}}{h}\right)\right]=\frac{1}{n}\sum_{i=1}^{n}\frac{1}{h^{d}}K_{x,h}(X_{i})-\mathbb{E}_{P}\left[\frac{1}{h^{d}}K_{x,h}\right].
	\]
	Hence $\sup_{x\in\mathbb{X}}\left|\hat{p}_{h}(x)-p_{h}(x)\right|$
	can be expanded as 
	\begin{equation}
	\sup_{x\in\mathbb{X}}\left|\hat{p}_{h}(x)-p_{h}(x)\right|=\sup_{f\in\tilde{\mathcal{F}}_{K,h}}\left|\frac{1}{n}\sum_{i=1}^{n}f(X_{i})-\mathbb{E}_{P}\left[f(X)\right]\right|.\label{eq:kde_uniform_expansion-1}
	\end{equation}
	Now, it is immediate to check that 
	\begin{equation}
	\left\Vert f\right\Vert _{\infty}\leq h^{-d}\left\Vert K\right\Vert _{\infty}.\label{eq:kde_uniform_bound_inf-1}
	\end{equation}
	Also, Since $\tilde{\mathcal{F}}_{K,h}=h^{-d}\mathcal{F}_{K,h}$,
	VC dimension is uniformly bounded as Lemma \ref{lem:kde_vc} gives
	that for every probability measure $Q$ on $\mathbb{R}^{d}$ and for
	every $\eta\in(0,h^{-d}\left\Vert K\right\Vert _{\infty})$, the covering
	number $\mathcal{N}(\tilde{\mathcal{F}}_{K,h},L_{2}(Q),\eta)$ is
	upper bounded as 
	\begin{align}
	\sup_{Q}\mathcal{N}(\tilde{\mathcal{F}}_{K,h},L_{2}(Q),\eta) & =\sup_{Q}\mathcal{N}(\mathcal{F}_{K,h},L_{2}(Q),h^{d}\eta)\nonumber \\
	& \leq\left(\frac{2RM_{K}h^{-1}+\left\Vert K\right\Vert _{\infty}}{h^{d}\eta}\right)^{d}\nonumber \\
	& \leq\left(\frac{2RM_{K}\left\Vert K\right\Vert _{\infty}}{h^{d+1}\eta}\right)^{d}.\label{eq:kde_uniform_bound_vc-1}
	\end{align}
	Also, Lemma \ref{lem:kde_kernel_bound_two} implies that under Assumption
	\ref{ass:integrable}, for any $\epsilon\in(0,d_{{\rm {vol}}})$ (and
	$\epsilon$ can be $0$ if $d_{{\rm {vol}}}=0$ or under Assumption
	\ref{ass:dimension_exact}), 
	\begin{equation}
	\mathbb{E}_{P}\left[\left(\frac{1}{h^{d}}K_{x,h}\right)^{2}\right]\leq C_{k=2,P,K,\epsilon}h^{-2d+d_{\mathrm{vol}}-\epsilon}.\label{eq:kde_uniform_bound_two-1}
	\end{equation}
	Hence from \eqref{eq:kde_uniform_bound_inf-1}, \eqref{eq:kde_uniform_bound_vc-1},
	and \eqref{eq:kde_uniform_bound_two-1}, applying Theorem \ref{thm:function_uniform}
	to \eqref{eq:kde_uniform_expansion-1} gives that $\sup_{x\in\mathbb{X}}\left|\hat{p}_{h}(x)-p_{h}(x)\right|$
	is upper bounded with probability at least $1-\delta$ as 
	\begin{align*}
	& \sup_{x\in\mathbb{X}}\left|\hat{p}_{h}(x)-p_{h}(x)\right|\\
	& \leq C\left(\frac{2d\left\Vert K\right\Vert _{\infty}\log\left(\frac{2RM_{K}\left\Vert K\right\Vert _{\infty}}{\sqrt{C_{k=2,P,K,\epsilon}}h^{1+(d_{\mathrm{vol}}-\epsilon)/2}}\right)}{nh^{d}}+\sqrt{\frac{2dC_{k=2,P,K,\epsilon}\log\left(\frac{2RM_{K}\left\Vert K\right\Vert _{\infty}}{\sqrt{C_{k=2,P,K,\epsilon}}h^{1+(d_{\mathrm{vol}}-\epsilon)/2}}\right)}{nh^{2d-d_{\mathrm{vol}}+\epsilon}}}\right.\\
	& \qquad\qquad+\left.\sqrt{\frac{C_{k=2,P,K,\epsilon}\log(\frac{1}{\delta})}{nh^{2d-d_{\mathrm{vol}}+\epsilon}}}+\frac{\left\Vert K\right\Vert _{\infty}\log(\frac{1}{\delta})}{nh^{d}}\right)\\
	& \leq C_{R,M_{K},\left\Vert K\right\Vert _{\infty},d,\nu,d_{\mathrm{vol}},C_{k=2,P,K,\epsilon},\epsilon}\left(\frac{\left(\log\left(\frac{1}{h}\right)\right)_{+}}{nh^{d}}+\sqrt{\frac{\left(\log\left(\frac{1}{h}\right)\right)_{+}}{nh^{2d-d_{\mathrm{vol}}+\epsilon}}}+\sqrt{\frac{\log\left(\frac{2}{\delta}\right)}{nh^{2d-d_{\mathrm{vol}}+\epsilon}}}+\frac{\log\left(\frac{2}{\delta}\right)}{nh^{d}}\right),
	\end{align*}
	where $C_{R,M_{K},\left\Vert K\right\Vert _{\infty},d,\nu,d_{\mathrm{vol}},C_{k=2,P,K,\epsilon},\epsilon}$
	depends only on $R$, $M_{K}$, $\left\Vert K\right\Vert _{\infty}$,
	$d$, $\nu$, $d_{\mathrm{vol}}$, $C_{k=2,P,K,\epsilon}$, $\epsilon$.
\end{proof}

\section{Proof for Section \ref{sec:lower}}

Proposition \ref{prop:bound_lower} is shown by finding $x_{0}\in\mathbb{X}$ where the volume dimension is obtained, and analyzing the behavior of $\left|\hat{p}_{h_{n}}(x_{0})-p_{h_{n}}(x_{0})\right|$ by applying Central Limit Theorem.

\textbf{Proposition \ref{prop:bound_lower}.} \textit{
Suppose $P$ is a distribution satisfying Assumption~\ref{ass:dimension_exact_lower}
and with positive volume dimension $d_{{\rm vol}}>0$. Let $K$ be
a kernel function satisfying Assumption~\ref{ass:integrable}
with $k=1$ and $\lim_{t\to0}\inf_{\left\Vert x\right\Vert \leq t}K(x)>0$. Suppose
$\lim_{n} n h_{n}^{d_{{\rm vol}}} = \infty$.
Then, with probability $1-\delta$, the following holds for all large enough $n$ and small enough $h_{n}$: 
\[
\sup_{x\in\mathbb{X}}\left|\hat{p}_{h_{n}}(x)-p_{h_{n}}(x)\right|\geq C_{P,K,\delta}\sqrt{\frac{1}{nh_{n}^{2d-d_{{\rm vol}}}}}.
\]
where $C_{P,K,\delta}$ is a constant depending only on $P$, $K,$and
$\delta$.
}

\begin{proof}[Proof of Proposition \ref{prop:bound_lower}]

Note that $\lim_{t\to0}\inf_{\left\Vert x\right\Vert \leq t}K(x)>0$
implies that there exists $t_{0},K_{0}\in(0,\infty)$ such that 
\begin{equation}
K(x)\geq K_{0}I(\left\Vert x\right\Vert \leq t_{0}).\label{eq:lower_kernel_lower}
\end{equation}
Also, from $\sup_{x\in\mathbb{X}}\liminf_{r\to0}\frac{P(\mathbb{B}_{\mathbb{R}^{d}}(x,r))}{r^{d_{{\rm vol}}}}>0$,
we can choose $x_{0}\in\mathbb{X}$ such that $\liminf_{r\to0}\frac{P(\mathbb{B}_{\mathbb{R}^{d}}(x_{0},r))}{r^{d_{{\rm vol}}}}>0$.
From $\{h_{n}\}_{n\in\mathbb{N}}$ bounded, there exists $r_{0}>0$
and $p_{0}>0$ such that $r_{0}\geq h_{n}t_{0}$ for all $n\in\mathbb{N}$
and for all $r\leq r_{0}$, 
\begin{equation}
P(\mathbb{B}_{\mathbb{R}^{d}}(x_{0},r))\geq p_{0}r^{d_{{\rm vol}}}.\label{eq:lower_prob_lower}
\end{equation}
For $x\in\mathbb{X}$ and $h>0$, let $f_{x,h}:\mathbb{R}^{d}\to\mathbb{R}$
be $f_{x,h}=\frac{1}{h^{d}}\left(K_{x,h}-\mathbb{E}_{P}[K_{x,h}]\right)$,
so that at $x_{0}\in\mathbb{X}$, $\hat{p}_{h_{n}}(x_{0})-p_{h_{n}}(x_{0})$
is expanded as 
\[
\hat{p}_{h_{n}}(x_{0})-p_{h_{n}}(x_{0})=\frac{1}{n}\sum_{i=1}^{n}f_{x_{0},h_{n}}(X_{i}).
\]
Below we get a lower bound for $\mathbb{E}_{P}[f_{x_{0},h_{n}}^{2}]$.
First, fix $\epsilon<\frac{d_{{\rm vol}}}{2}$. Then from Lemma \ref{lem:kde_kernel_bound_two},
\begin{equation}
\mathbb{E}_{P}\left[\left|K_{x_{0},h}\right|\right]\leq C_{k=1,P,K,\epsilon}h^{d_{{\rm vol}}-\epsilon}.\label{eq:lower_one_upper}
\end{equation}
Now, we lower bound $\mathbb{E}_{P}[K_{x_{0},h}^{2}]$.
By applying \eqref{eq:lower_kernel_lower}, $\mathbb{E}_{P}[K_{x_{0},h}^{2}]$
is lower bounded as 
\begin{align*}
\mathbb{E}_{P}\left[K_{x_{0},h}^{2}\right] & \geq\mathbb{E}_{P}\left[K_{0}I\left(\left\Vert \frac{x_{0}-X_{i}}{h}\right\Vert \geq t_{0}\right)\right]\\
 & =K_{0}^{2}P(\mathbb{B}_{\mathbb{R}^{d}}(x_{0},ht_{0})).
\end{align*}
Then applying \eqref{eq:lower_prob_lower} gives a further lower bound
as 
\begin{align}
\mathbb{E}_{P}\left[K_{x_{0},h}^{2}\right] & \geq K_{0}^{2}p_{0}t_{0}^{d_{{\rm vol}}}h^{d_{{\rm vol}}}.\label{eq:lower_two_lower}
\end{align}
Then combining \eqref{eq:lower_one_upper} and \eqref{eq:lower_two_lower}
gives a lower bound of $\mathbb{E}_{P}[f_{x_{0},h}^{2}]$
as 
\begin{align*}
\mathbb{E}_{P}\left[f_{x_{0},h}^{2}\right] & =\frac{1}{h^{2d}}\left(\mathbb{E}_{P}\left[K_{x_{0},h}^{2}\right]-\left(\mathbb{E}_{P}\left[K_{x_{0},h}\right]\right)^{2}\right)\\
 & \geq h^{d_{{\rm vol}}-2d}(K_{0}^{2}p_{0}t_{0}^{d_{{\rm vol}}}-C_{k=1,P,K,\epsilon}^{2}h^{d_{{\rm vol}}-2\epsilon}).
\end{align*}
Hence from $d_{{\rm vol}}-2\epsilon>0$, there exists $h_{P,K}$ and
$C_{P,K}'$ depending only on $P$ and $K$ such that $h_{n}\leq h_{P,K}$
implies 
\begin{equation}
\mathbb{E}_{P}\left[f_{x_{0},h_{n}}^{2}\right]\geq C_{P,K}'h_{n}^{d_{{\rm vol}}-2d}.\label{eq:lower_var_lower}
\end{equation}
Now, let $s_{n}:=\sqrt{\sum_{i=1}^{n}\mathbb{E}_{P}[f_{x_{0},h_{n}}^{2}(X_{i})]}$.
Then \eqref{eq:lower_var_lower} gives
\[
s_{n}\geq\sqrt{C_{P,K}'nh_{n}^{d_{{\rm vol}}-2d}}.
\]
Then for any $\epsilon>0$, when $n$ is large enough so that $nh_{n}^{d_{{\rm vol}}}>\frac{\left\Vert K\right\Vert _{\infty}^{2}}{\epsilon^{2}C_{P,K}'}$,
then 
\[
\left\Vert f_{x_{0},h_{n}}\right\Vert _{\infty}\leq h^{-d}\left\Vert K\right\Vert _{\infty}<\epsilon\sqrt{C_{P,K}'nh_{n}^{d_{{\rm vol}}-2d}}\leq s_{n}.
\]
Hence Lindeberg condition holds as for $n$ large enough so that $nh_{n}^{d_{{\rm vol}}}>\frac{\left\Vert K\right\Vert _{\infty}^{2}}{\epsilon^{2}C_{P,K}'}$,
then
\[
\frac{1}{s_{n}^{2}}\sum_{i=1}^{n}\mathbb{E}\left[f_{x_{0},h_{n}}^{2}(X_{i})I\left(|f_{x_{0},h_{n}}(X_{i})|\geq\epsilon s_{n}\right)\right]=0.
\]
Hence, Lindeberg-Feller Central Limit Theorem gives 
\[
\sqrt{\frac{n}{\mathbb{E}_{P}[f_{x_{0},h_{n}}^{2}]}}(\hat{p}_{h_{n}}(x_{0})-p_{h_{n}}(x_{0}))\overset{d}{\to}N\left(0,1\right).
\]
Hence, for fixed $\delta\in(0,1)$, let $q_{\delta/2}\in\mathbb{R}$
be such that $P(|Z|\leq q_{\delta/2})=\frac{\delta}{2}$ for $Z\sim N(0,1)$,
then 
\[
\lim_{n\to\infty}P\left(\left|\sqrt{\frac{n}{\mathbb{E}_{P}[f_{x_{0},h_{n}}^{2}]}}(\hat{p}_{h_{n}}(x_{0})-p_{h_{n}}(x_{0}))\right|\geq q_{\delta/2}\right)=1-\frac{\delta}{2}.
\]
And hence there exists $N<\infty$ that for all $n\geq N$, 
\[
P\left(\left|\hat{p}_{h_{n}}(x_{0})-p_{h_{n}}(x_{0})\right|\geq q_{\delta/2}\sqrt{\frac{\mathbb{E}_{P}[f_{x_{0},h_{n}}^{2}]}{n}}\right)\geq1-\delta.
\]
Then applying \eqref{eq:lower_var_lower} implies that with probability
at least $1-\delta$, 
\[
\left|\hat{p}_{h_{n}}(x_{0})-p_{h_{n}}(x_{0})\right|\geq\sqrt{\frac{q_{\delta/2}^{2}C_{P,K}'}{nh_{n}^{2d-d_{{\rm vol}}}}}=C_{P,K,\delta}\sqrt{\frac{1}{nh_{n}^{2d-d_{{\rm vol}}}}},
\]
where $C_{P,K,\delta}=q_{\delta/2}\sqrt{C_{P,K}'}$ depends only on
$P$, $K$, and $\delta$. Then from 
\[
\sup_{x\in\mathbb{X}}\left|\hat{p}_{h_{n}}(x)-p_{h_{n}}(x)\right|\geq\left|\hat{p}_{h_{n}}(x_{0})-p_{h_{n}}(x_{0})\right|,
\]
we get the same lower bound for $\sup_{x\in\mathbb{X}}\left|\hat{p}_{h_{n}}(x)-p_{h_{n}}(x)\right|$
with probability at least $1-\delta$ as 
\[
\sup_{x\in\mathbb{X}}\left|\hat{p}_{h_{n}}(x)-p_{h_{n}}(x)\right|\geq\sqrt{\frac{q_{\delta/2}^{2}C_{P,K}'}{nh_{n}^{2d-d_{{\rm vol}}}}}=C_{P,K,\delta}\sqrt{\frac{1}{nh_{n}^{2d-d_{{\rm vol}}}}}.
\]

\end{proof}

\section{Proof for Section \ref{sec:derivative}}

For showing Lemma \ref{lem:derivative_kernel_bound_two}, we proceed similarly to proof of Lemma \ref{lem:kde_kernel_bound_two},
where we plug in $D^{s}K$ in the place of $K$.

\textbf{Lemma \ref{lem:derivative_kernel_bound_two}.} \textit{
	Let $(\mathbb{R}^{d},P)$
	be a probability space and let $X\sim P$. For any kernel $K$ satisfying Assumption~\ref{ass:derivative_integrable},
	the expectation of the square of the derivative of the kernel is upper bounded as 
	\[
	\mathbb{E}_{P}\left[\left(D^{s}K\left(\frac{x-X}{h}\right)\right)^{2}\right]\leq C_{s,P,K,\epsilon}h^{d_{\mathrm{vol}}-\epsilon},
	\]
	for any $\epsilon \in(0,d_{\rm{vol}})$, where $C_{s,P,K,\epsilon}$ is a constant depending only on $s$, $P$, $K$, $\epsilon$. Further, if $d_{\rm{vol}}=0$ or under Assumption~\ref{ass:dimension_exact}, $\epsilon$ can be $0$ in \eqref{eq:derivative_kernel_bound}.
}

\begin{proof}[Proof of Lemma \ref{lem:derivative_kernel_bound_two}]
	
	We first consider the case when $d_{{\rm vol}}=0$. Then $\mathbb{E}_{P}\left[\left(D^{s}K\left(\frac{x-X}{h}\right)\right)^{2}\right]$
	is simply bounded as 
	\[
	\mathbb{E}_{P}\left[\left(D^{s}K\left(\frac{x-X}{h}\right)\right)^{2}\right]\leq\left\Vert D^{s}K\right\Vert _{\infty}^{2}h^{0}.
	\]
	
	Now, we consider the case when $d_{{\rm vol}}>0$. Fix $\epsilon\in(0,d_{{\rm vol}})$.
	Under Assumption \ref{ass:dimension_exact}, $\epsilon$ can be chosen
	to be $0$.
	
	Let $C_{s,K,d_{\mathrm{vol}},\epsilon}:=\int_{0}^{\infty}t^{d_{\mathrm{vol}}-\epsilon-1}\sup_{\left\Vert x\right\Vert \leq t}(D^{s}K(x))^{2}dt$,
	then it is finite from \eqref{eq:deriv_kde_condition_integral_finite}
	and $\left\Vert D^{s}K\right\Vert _{\infty}<\infty$ in Assumption \ref{ass:derivative_vc}
	as 
	\begin{align*}
	\int_{0}^{\infty}t^{d_{\mathrm{vol}}-\epsilon-1}\sup_{\left\Vert x\right\Vert \leq t}(D^{s}K(x))^{2}dt & \leq\int_{0}^{1}t^{d_{\mathrm{vol}}-\epsilon-1}\left\Vert D^{s}K\right\Vert _{\infty}dt+\int_{1}^{\infty}t^{d_{{\rm vol}}-1}\sup_{\left\Vert x\right\Vert \leq t}(D^{s}K(x))^{2}dt\\
	& \leq\frac{\left\Vert D^{s}K\right\Vert _{\infty}}{d_{{\rm vol}}-\epsilon}+\int_{0}^{\infty}t^{d_{{\rm vol}}-1}\sup_{\left\Vert x\right\Vert \leq t}(D^{s}K(x))^{2}dt<\infty.
	\end{align*}
	Fix $\eta>0$, and let $\tilde{K}_{\eta}:[0,\infty)\to\mathbb{R}$
	be a continuous and strictly decreasing function satisfying $\tilde{K}_{\eta}(t)>\sup_{\left\Vert x\right\Vert \geq t}(D^{s}K(x))^{2}$
	for all $t\geq0$ and $\int_{0}^{\infty}t^{d_{\mathrm{vol}}-\epsilon-1}(\tilde{K}_{\eta}(t)-\sup_{\left\Vert x\right\Vert \geq t}(D^{s}K(x))^{2})dt=\eta$.
	Such existence is possible since $t\mapsto\sup_{\left\Vert x\right\Vert \geq t}(D^{s}K(x))^{2}$
	is nonincreasing function, so have at most countable discontinuous
	points, and $\int_{0}^{\infty}t^{d_{\mathrm{vol}}-\epsilon-1}\sup_{\left\Vert x\right\Vert \leq t}(D^{s}K(x))^{2}dt<\infty$.
	Then it is immediate to check that 
	\begin{equation}
	(D^{s}K(x))^{2}<\tilde{K}_{\eta}(\left\Vert x\right\Vert )\text{ for all }x\in\mathbb{R}.\label{eq:kde_kernel_bound_upper_onedim-1}
	\end{equation}
	Then $\int_{0}^{\infty}t^{d_{\mathrm{vol}}-\epsilon-1}\tilde{K}(t)dt$
	can be expanded as 
	\begin{align}
	\int_{0}^{\infty}t^{d_{\mathrm{vol}}-\epsilon-1}\tilde{K}_{\eta}(t)dt & =\int_{0}^{\infty}t^{d_{\mathrm{vol}}-\epsilon-1}\sup_{\left\Vert x\right\Vert \leq t}(D^{s}K(x))^{2}dt+\int_{0}^{\infty}t^{d_{\mathrm{vol}}-\epsilon-1}(\tilde{K}_{\eta}(t)-\sup_{\left\Vert x\right\Vert \geq t}(D^{s}K(x))^{2})dt\nonumber \\
	& =C_{s,K,d_{\mathrm{vol}},\epsilon}+\eta<\infty.\label{eq:kde_kernel_bound_upper_integral-1}
	\end{align}
	Now since $\tilde{K}_{\eta}$ is continuous and strictly decreasing,
	change of variables $t=\tilde{K}_{\eta}(u)$ is applicable, and then
	$\mathbb{E}_{P}\left[\left(D^{s}K\left(\frac{x-X}{h}\right)\right)^{2}\right]$
	can be expanded as 
	\begin{align*}
	\mathbb{E}_{P}\left[\left(D^{s}K\left(\frac{x-X}{h}\right)\right)^{2}\right] & =\int_{0}^{\infty}P\left(\left(D^{s}K\left(\frac{x-X}{h}\right)\right)^{2}>t\right)dt\\
	& =\int_{\infty}^{0}P\left(\left(D^{s}K\left(\frac{x-X}{h}\right)\right)^{2}>\tilde{K}_{\eta}(u)\right)d\tilde{K}_{\eta}(u).
	\end{align*}
	Now, from \eqref{eq:kde_kernel_bound_upper_onedim-1} and $\tilde{K}_{\eta}$
	being a strictly decreasing, we can upper bound $\mathbb{E}_{P}\left[\left(D^{s}K\left(\frac{x-X}{h}\right)\right)^{2}\right]$
	as 
	\begin{align*}
	\mathbb{E}_{P}\left[\left(D^{s}K\left(\frac{x-X}{h}\right)\right)^{2}\right] & \leq\int_{\infty}^{0}P\left(\tilde{K}_{\eta}\left(\frac{\left\Vert x-X\right\Vert }{h}\right)>\tilde{K}_{\eta}(u)\right)d\tilde{K}_{\eta}(u)\\
	& =\int_{\infty}^{0}P\left(\frac{\left\Vert x-X\right\Vert }{h}<u\right)d\tilde{K}_{\eta}(u)\\
	& =\int_{\infty}^{0}P\left(\mathbb{B}_{\mathbb{R}^{d}}(x,hu)\right)d\tilde{K}_{\eta}(u).
	\end{align*}
	Now, from Lemma \ref{lem:dimension_volume_lower} (and \eqref{eq:dimension_exact}
	for Assumption \ref{ass:dimension_exact} case), there exists $C_{d_{{\rm vol}}-\epsilon,P}<\infty$
	with $P\left(\mathbb{B}_{\mathbb{R}^{d}}(x,r)\right)\leq C_{d_{{\rm vol}}-\epsilon,P}r^{d_{\mathrm{vol}}-\epsilon}$
	for all $x\in\mathbb{X}$ and $r>0$. Then $\mathbb{E}_{P}\left[\left(D^{s}K\left(\frac{x-X}{h}\right)\right)^{2}\right]$
	is further upper bounded as 
	\begin{align}
	\mathbb{E}_{P}\left[\left(D^{s}K\left(\frac{x-X}{h}\right)\right)^{2}\right] & \leq\int_{\infty}^{0}C_{d_{{\rm vol}}-\epsilon,P}(hu)^{d_{\mathrm{vol}}-\epsilon}d\tilde{K}(u)\nonumber \\
	& =C_{d_{{\rm vol}}-\epsilon,P}h^{d_{\mathrm{vol}}-\epsilon}\int_{\infty}^{0}u^{d_{\mathrm{vol}}-\epsilon}d\tilde{K}(u).\label{eq:kde_kernel_bound_upperbound_stieltjes-1}
	\end{align}
	Now, $\int_{\infty}^{0}u^{d_{\mathrm{vol}}-\epsilon}d\tilde{K}(u)$
	can be computed using integration by part. Note first that $\int_{0}^{\infty}t^{d_{\mathrm{vol}}-\epsilon-1}\tilde{K}(t)dt<\infty$
	implies 
	\[
	\lim_{t\to\infty}t^{d_{\mathrm{vol}}-\epsilon}\tilde{K}(t)=0.
	\]
	To see this, note that $t^{d_{\mathrm{vol}}-\epsilon}\tilde{K}(t)$
	is expanded as 
	\[
	t^{d_{\mathrm{vol}}-\epsilon}\tilde{K}(t)=\int_{0}^{t}u^{d_{\mathrm{vol}}-\epsilon}d\tilde{K}(u)+\int_{0}^{t}(d_{\mathrm{vol}}-\epsilon)u^{d_{\mathrm{vol}}-\epsilon-1}\tilde{K}(u)du,
	\]
	then $\int_{0}^{\infty}(d_{\mathrm{vol}}-\epsilon)u^{d_{\mathrm{vol}}-\epsilon-1}\tilde{K}(u)du<\infty$
	and $\int_{0}^{t}u^{d_{\mathrm{vol}}-\epsilon}d\tilde{K}(u)$ being
	monotone function of $t$ imply that $\lim_{t\to\infty}t^{d_{\mathrm{vol}}-\epsilon}\tilde{K}(t)$
	exists. Now, suppose $\lim_{t\to\infty}t^{d_{\mathrm{vol}}-\epsilon}\tilde{K}(t)=a>0$,
	then we can choose $t_{0}>0$ such that $t^{d_{\mathrm{vol}}-\epsilon}\tilde{K}(t)>\frac{a}{2}$
	for all $t\geq t_{0}$, and then 
	\[
	\infty>\int_{0}^{\infty}t^{d_{\mathrm{vol}}-\epsilon-1}\tilde{K}(t)dt\geq\int_{t_{0}}^{\infty}t^{d_{\mathrm{vol}}-\epsilon-1}\tilde{K}(t)dt\geq\frac{a}{2}\int_{t_{0}}^{\infty}t^{-1}dt=\infty,
	\]
	which is a contradiction. Hence $\lim_{t\to\infty}t^{d_{\mathrm{vol}}-\epsilon}\tilde{K}(t)=0$.
	Now, applying integration by part to $\int_{\infty}^{0}u^{d_{\mathrm{vol}}-\epsilon}d\tilde{K}(u)$
	with $d_{{\rm vol}}-\epsilon>0$ gives 
	\begin{align}
	\int_{\infty}^{0}u^{d_{\mathrm{vol}}-\epsilon}d\tilde{K}(u) & =\left[u^{d_{\mathrm{vol}}-\epsilon}\tilde{K}(u)\right]_{\infty}^{0}-\int_{\infty}^{0}(d_{\mathrm{vol}}-\epsilon)u^{d_{\mathrm{vol}}-\epsilon-1}\tilde{K}(u)du\nonumber \\
	& =\int_{0}^{\infty}(d_{\mathrm{vol}}-\epsilon)u^{d_{\mathrm{vol}}-\epsilon-1}\tilde{K}(u)du.\label{eq:kde_kernel_bound_integral_part-1}
	\end{align}
	Then applying \eqref{eq:kde_kernel_bound_upper_integral-1} and \eqref{eq:kde_kernel_bound_integral_part-1}
	to \eqref{eq:kde_kernel_bound_upperbound_stieltjes-1} gives an upper
	bound for $\mathbb{E}_{P}\left[\left(D^{s}K\left(\frac{x-X}{h}\right)\right)^{2}\right]$
	as 
	\begin{equation}
	\mathbb{E}_{P}\left[\left(D^{s}K\left(\frac{x-X}{h}\right)\right)^{2}\right]\leq C_{d_{{\rm vol}}-\epsilon,P}(d_{\mathrm{vol}}-\epsilon)h^{d_{\mathrm{vol}}-\epsilon}(C_{s,K,d_{\mathrm{vol}},\epsilon}+\eta).\label{eq:kde_kernel_bound_upperbound_epsilon-1}
	\end{equation}
	And then note that RHS of \eqref{eq:kde_kernel_bound_upperbound_epsilon-1}
	holds for any $\eta>0$, and hence $\mathbb{E}_{P}\left[\left(D^{s}K\left(\frac{x-X}{h}\right)\right)^{2}\right]$
	is further upper bounded as 
	\begin{align*}
	\mathbb{E}_{P}\left[\left(D^{s}K\left(\frac{x-X}{h}\right)\right)^{2}\right] & \leq\inf_{\eta>0}\left\{ C_{d_{{\rm vol}}-\epsilon,P}(d_{\mathrm{vol}}-\epsilon)h^{d_{\mathrm{vol}}-\epsilon}(C_{s,K,d_{\mathrm{vol}},\epsilon}+\eta)\right\} \\
	& =C_{d_{{\rm vol}}-\epsilon,P}(d_{\mathrm{vol}}-\epsilon)C_{s,K,d_{\mathrm{vol}},\epsilon}h^{d_{\mathrm{vol}}-\epsilon}\\
	& =C_{s,P,K,\epsilon}h^{d_{{\rm vol}}-\epsilon},
	\end{align*}
	where $C_{k,P,K,\epsilon}=C_{d_{{\rm vol}}-\epsilon,P}(d_{\mathrm{vol}}-\epsilon)C_{s,K,d_{\mathrm{vol}},\epsilon}$.
	
\end{proof}

For proving Theorem \ref{thm:derivative_band_ray_uniform}, we proceed similarly to the proof of Theorem \ref{thm:kde_band_ray_uniform}.
Analogous to bounding $\mathbb{E}_{P}[K_{x,h}^{2}]$ by
Lemma \ref{lem:kde_kernel_bound_two}, we bound $\mathbb{E}_{P}[(D^{s}K_{x,h})^{2}]$
by Lemma \ref{lem:derivative_kernel_bound_two}.

\textbf{Theorem \ref{thm:derivative_band_ray_uniform}.} \textit{
	Let $P$ be a distribution and $K$ be a kernel function satisfying
	Assumption~\ref{ass:derivative_leibniz},~\ref{ass:derivative_integrable}, and~\ref{ass:derivative_vc}. Then, with probability at least $1-\delta$,
	\begin{align*}
	& \sup_{h\geq l_{n},x\in\mathbb{X}}\left|D^{s}\hat{p}_{h}(x)-D^{s}p_{h}(x)\right|\\
	& \leq C\left(\frac{\left(\log\left(1 / l_{n}\right)\right)_{+}}{nl_{n}^{d+|s|}}+\sqrt{\frac{\left(\log\left(1 / l_{n}\right)\right)_{+}}{nl_{n}^{2d+2|s|-d_{\mathrm{vol}}+\epsilon}}}
	+\sqrt{\frac{\log\left(2 / \delta\right)}{nl_{n}^{2d+2|s|-d_{\mathrm{vol}}+\epsilon}}}+\frac{\log\left(2 / \delta\right)}{nl_{n}^{d+|s|}}\right),
	\end{align*}
	for any $\epsilon\in(0,d_{\rm{vol}})$, where $C$ is a constant depending only on $A$, $\left\Vert D^{s}K\right\Vert _{\infty}$, $d$,
	$\nu$, $d_{\mathrm{vol}}$, $C_{s,P,K,\epsilon}$, $\epsilon$. Further, if $d_{\rm{vol}}=0$ or under Assumption~\ref{ass:dimension_exact}, $\epsilon$ can be $0$ in \eqref{eq:derivative_band_ray_uniform_bound}.
}	

\begin{proof}[Proof of Theorem \ref{thm:derivative_band_ray_uniform}]
	
	For $x\in\mathbb{X}$ and $h\geq l_{n}$, let $D^{s}K_{x,h}:\mathbb{R}^{d}\to\mathbb{R}$
	be $D^{s}K_{x,h}(\cdot)=D^{s}K\left(\frac{x-\cdot}{h}\right)$, and let $\tilde{\mathcal{F}}_{K,[l_{n},\infty)}^{s}:=\left\{ \frac{1}{h^{d+|s|}}D^{s}K_{x,h}:\,x\in\mathbb{X},h\geq l_{n}\right\} $
	be a class of normalized kernel functions centered on $\mathbb{X}$
	and bandwidth in $[l_{n},\infty)$. Note that $D^{s}\hat{p}_{h}(x)-D^{s}p_{h}(x)$
	can be expanded as 
	\begin{align*}
	D^{s}\hat{p}_{h}(x)-D^{s}p_{h}(x)&=\frac{1}{nh^{d+|s|}}\sum_{i=1}^{n}D^{s}K\left(\frac{x-X_{i}}{h}\right)-\mathbb{E}_{P}\left[\frac{1}{h^{d+|s|}}D^{s}K\left(\frac{x-X_{i}}{h}\right)\right]\\
	&=\frac{1}{n}\sum_{i=1}^{n}\frac{1}{h^{d+|s|}}D^{s}K_{x,h}(X_{i})-\mathbb{E}_{P}\left[\frac{1}{h^{d+|s|}}D^{s}K_{x,h}\right].
	\end{align*}
	Hence $\sup_{h\geq l_{n},x\in\mathbb{X}}\left|D^{s}\hat{p}_{h}(x)-D^{s}p_{h}(x)\right|$
	can be expanded as 
	\begin{equation}
	\sup_{h\geq l_{n},x\in\mathbb{X}}\left|D^{s}\hat{p}_{h}(x)-D^{s}p_{h}(x)\right|=\sup_{f\in\tilde{\mathcal{F}}_{K,[l_{n},\infty)}^{s}}\left|\frac{1}{n}\sum_{i=1}^{n}f(X_{i})-\mathbb{E}_{P}\left[f(X)\right]\right|.\label{eq:kde_uniform_expansion-2}
	\end{equation}
	Now, it is immediate to check that 
	\begin{equation}
	\left\Vert f\right\Vert _{\infty}\leq l_{n}^{-d-|s|}\left\Vert D^{s}K\right\Vert _{\infty}.\label{eq:kde_uniform_bound_inf-2}
	\end{equation}
	For bounding the VC dimension of $\tilde{\mathcal{F}}_{K,[l_{n},\infty)}^{s}$,
	consider $\mathcal{F}_{K,[l_{n},\infty)}^{s}:=\left\{ D^{s}K_{x,h}:\,x\in\mathbb{X},h\geq l_{n}\right\} $
	be a class of unnormalized kernel functions centered on $\mathbb{X}$
	and bandwidth in $[l_{n},\infty)$. Fix $\eta<l_{n}^{-d-|s|}\left\Vert D^{s}K\right\Vert _{\infty}$
	and a probability measure $Q$ on $\mathbb{R}^{d}$. Suppose $\left[l_{n},\left(\frac{\eta}{2\left\Vert D^{s}K\right\Vert _{\infty}}\right)^{-1/(d+|s|)}\right]$
	is covered by balls $\Bigl\{ \Bigl(h_{i}-\frac{l_{n}^{d+|s|+1}\eta}{2(d+|s|)\left\Vert D^{s}K\right\Vert _{\infty}},h_{i}+\frac{l_{n}^{d+|s|+1}\eta}{2(d+|s|)\left\Vert D^{s}K\right\Vert _{\infty}}\Bigr):\,1\leq i\leq N_{1}\Bigr\} $
	and $(\mathcal{F}_{K,[l_{n},\infty)}^{s},L_{2}(Q))$ is covered by
	balls $\Bigl\{ \mathbb{B}_{L_{2}(Q)}\left(f_{j},\frac{l_{n}^{d+|s|}\eta}{2}\right):\,1\leq j\leq N_{2}\Bigr\} $,
	and let $f_{i,j}:=h_{i}^{-d-|s|}f_{j}$ for $1\leq i\leq N_{1}$ and
	$1\leq j\leq N_{2}$. Also, choose $h_{0}>\left(\frac{\eta}{2\left\Vert D^{s}K\right\Vert _{\infty}}\right)^{-1/(d+|s|)}$,
	$x_{0}\in\mathbb{X}$, and let $f_{0}=\frac{1}{h_{0}^{d+|s|}}D^{s}K_{x_{0},h_{0}}$.
	We will show that 
	\begin{equation}
	\left\{ \mathbb{B}_{L_{2}(Q)}\left(f_{i,j},\eta\right):\,1\leq i\leq N_{1},\,1\leq j\leq N_{2}\right\} \cup\left\{ \mathbb{B}_{L_{2}(Q)}\left(f_{0},\eta\right)\right\} \text{ covers }\tilde{\mathcal{F}}_{K,[l_{n},\infty)}^{s}.\label{eq:kde_uniform_covering-1}
	\end{equation}
	For the first case when $h\leq\left(\frac{\eta}{2\left\Vert D^{s}K\right\Vert _{\infty}}\right)^{-1/(d+|s|)}$,
	find $h_{i}$ and $f_{j}$ with $h\in\Bigl(h_{i}-\frac{l_{n}^{d+|s|+1}\eta}{2(d+|s|)\left\Vert D^{s}K\right\Vert _{\infty}},h_{i}+\frac{l_{n}^{d+|s|+1}\eta}{2(d+|s|)\left\Vert D^{s}K\right\Vert _{\infty}}\Bigr)$
	and $K_{x,h}\in\mathbb{B}_{L_{2}(Q)}\left(f_{j},\frac{l_{n}^{d+|s|}\eta}{2}\right)$.
	Then the distance between $\frac{1}{h^{d+|s|}}D^{s}K_{x,h}$ and $\frac{1}{h_{i}^{d+|s|}}f_{j}$
	is upper bounded as 
	\begin{align}
	& \left\Vert \frac{1}{h^{d+|s|}}D^{s}K_{x,h}-\frac{1}{h_{i}^{d+|s|}}f_{j}\right\Vert _{L_{2}(Q)}\nonumber \\
	& \leq\left\Vert \frac{1}{h^{d+|s|}}D^{s}K_{x,h}-\frac{1}{h_{i}^{d+|s|}}D^{s}K_{x,h}\right\Vert _{L_{2}(Q)}+\left\Vert \frac{1}{h_{i}^{d+|s|}}D^{s}K_{x,h}-\frac{1}{h_{i}^{d+|s|}}f_{j}\right\Vert _{L_{2}(Q)}.\label{eq:kde_uniform_covering_decomposition-1}
	\end{align}
	Now, the first term of \eqref{eq:kde_uniform_covering_decomposition-1}
	is upper bounded as 
	\begin{align}
	\left\Vert \frac{1}{h^{d+|s|}}D^{s}K_{x,h}-\frac{1}{h_{i}^{d+|s|}}D^{s}K_{x,h}\right\Vert _{L_{2}(Q)} & =\left|\frac{1}{h^{d+|s|}}-\frac{1}{h_{i}^{d+|s|}}\right|\left\Vert D^{s}K_{x,h}\right\Vert _{L_{2}(Q)}\nonumber \\
	& =\left|h_{i}-h\right|\sum_{k=0}^{d+|s|-1}h_{i}^{k-d-|s|}h^{-1-k}\left\Vert D^{s}K_{x,h}\right\Vert _{L_{2}(Q)}\nonumber \\
	& \leq\left|h_{i}-h\right|(d+|s|)l_{n}^{-d-|s|-1}\left\Vert D^{s}K\right\Vert _{\infty}<\frac{\eta}{2}.\label{eq:kde_uniform_covering_decomposition_first-1}
	\end{align}
	Also, the second term of \eqref{eq:kde_uniform_covering_decomposition-1}
	is upper bounded as 
	\begin{align}
	\left\Vert \frac{1}{h_{i}^{d+|s|}}D^{s}K_{x,h}-\frac{1}{h_{i}^{d+|s|}}f\right\Vert _{L_{2}(Q)} & =\frac{1}{h_{i}^{d+|s|}}\left\Vert D^{s}K_{x,h}-f\right\Vert _{L_{2}(Q)}\nonumber \\
	& \leq l_{n}^{-d-|s|}\left\Vert D^{s}K_{x,h}-f\right\Vert _{L_{2}(Q)}<\frac{\eta}{2}.\label{eq:kde_uniform_covering_decomposition_second-1}
	\end{align}
	Hence applying \eqref{eq:kde_uniform_covering_decomposition_first-1}
	and \eqref{eq:kde_uniform_covering_decomposition_second-1} to \eqref{eq:kde_uniform_covering_decomposition-1}
	gives 
	\[
	\left\Vert \frac{1}{h^{d+|s|}}D^{s}K_{x,h}-\frac{1}{h_{i}^{d+|s|}}f_{j}\right\Vert _{L_{2}(Q)}<\eta.
	\]
	For the second case when $h>\left(\frac{\eta}{2\left\Vert D^{s}K\right\Vert _{\infty}}\right)^{-1/(d+|s|)}$,
	$\left\Vert \frac{1}{h^{d+|s|}}D^{s}K_{x,h}\right\Vert _{L_{2}(Q)}\leq\left\Vert \frac{1}{h^{d+|s|}}D^{s}K_{x,h}\right\Vert _{\infty}<\frac{\eta}{2}$
	holds, and hence 
	\[
	\left\Vert \frac{1}{h^{d+|s|}}D^{s}K_{x,h}-f_{0}\right\Vert _{L_{2}(Q)}\leq\left\Vert \frac{1}{h^{d+|s|}}D^{s}K_{x,h}\right\Vert _{L_{2}(Q)}+\left\Vert f_{0}\right\Vert _{L_{2}(Q)}<\eta.
	\]
	Therefore, \eqref{eq:kde_uniform_covering-1} is shown. Hence combined
	with Assumption \ref{ass:derivative_vc} gives that for every probability
	measure $Q$ on $\mathbb{R}^{d}$ and for every $\eta\in(0,h^{-d}\left\Vert D^{s}K\right\Vert _{\infty})$,
	the covering number $\mathcal{N}(\tilde{\mathcal{F}}_{K,[l_{n},\infty)},L_{2}(Q),\eta)$
	is upper bounded as 
	\begin{align}
	& \sup_{Q}\mathcal{N}(\tilde{\mathcal{F}}_{K,[l_{n},\infty)},L_{2}(Q),\eta) \nonumber\\
	& \leq\mathcal{N}\left(\left[l_{n},\left(\frac{\eta}{2\left\Vert D^{s}K\right\Vert _{\infty}}\right)^{-1/(d+|s|)}\right],|\cdot|,\frac{l_{n}^{d+|s|+1}\eta}{2(d+|s|)\left\Vert D^{s}K\right\Vert _{\infty}}\right)\sup_{Q}\mathcal{N}\left(\mathcal{F}_{K,[l_{n},\infty)},L_{2}(Q),\frac{l_{n}^{d+|s|}\eta}{2}\right)+1\nonumber \\
	& \leq\frac{2(d+|s|)\left\Vert D^{s}K\right\Vert _{\infty}}{l_{n}^{d+|s|+1}\eta}\left(\frac{2\left\Vert D^{s}K\right\Vert _{\infty}}{\eta}\right)^{1/(d+|s|)}\left(\frac{2A\left\Vert D^{s}K\right\Vert _{\infty}}{l_{n}^{d+|s|}\eta}\right)^{\nu}+1\nonumber \\
	& \leq\left(\frac{2A(d+|s|)\left\Vert D^{s}K\right\Vert _{\infty}}{l_{n}^{d+|s|}\eta}\right)^{\nu+2}.\label{eq:kde_uniform_bound_vc-2}
	\end{align}
	Also, Lemma \ref{lem:derivative_kernel_bound_two} implies that under Assumption
	\ref{ass:derivative_integrable}, for any $\epsilon\in(0,d_{{\rm {vol}}})$ (and
	$\epsilon$ can be $0$ if $d_{{\rm {vol}}}=0$ or under Assumption
	\ref{ass:dimension_exact}), 
	\begin{equation}
	\mathbb{E}_{P}\left[\left(\frac{1}{h^{d+|s|}}D^{s}K_{x,h}\right)^{2}\right]\leq C_{s,P,K,\epsilon}l_{n}^{-2d-2|s|+d_{\mathrm{vol}}-\epsilon}.\label{eq:kde_uniform_bound_two-2}
	\end{equation}
	Hence from \eqref{eq:kde_uniform_bound_inf-2}, \eqref{eq:kde_uniform_bound_vc-2},
	and \eqref{eq:kde_uniform_bound_two-2}, applying Theorem \ref{thm:function_uniform}
	to \eqref{eq:kde_uniform_expansion-2} gives that $\sup_{h\geq l_{n},x\in\mathbb{X}}\bigl|D^{s}\hat{p}_{h}(x)-D^{s}p_{h}(x)\bigr|$
	is upper bounded with probability at least $1-\delta$ as 
	\begin{align*}
	& \sup_{h\geq l_{n},x\in\mathbb{X}}\left|D^{s}\hat{p}_{h}(x)-D^{s}p_{h}(x)\right|\\
	& \leq C\left(\frac{2(\nu+2)\left\Vert D^{s}K\right\Vert _{\infty}\log\left(\frac{2A(d+|s|)\left\Vert D^{s}K\right\Vert _{\infty}}{\sqrt{C_{s,P,K,\epsilon}}l_{n}^{(d_{\mathrm{vol}}-\epsilon)/2}}\right)}{nl_{n}^{d+|s|}}+\sqrt{\frac{2(\nu+2)C_{s,P,K,\epsilon}\log\left(\frac{2A(d+|s|)\left\Vert D^{s}K\right\Vert _{\infty}}{\sqrt{C_{s,P,K,\epsilon}}l_{n}^{(d_{\mathrm{vol}}-\epsilon)/2}}\right)}{nl_{n}^{2d+2|s|-d_{\mathrm{vol}}+\epsilon}}}\right.\\
	& \qquad\qquad+\left.\sqrt{\frac{C_{s,P,K,\epsilon}\log(\frac{1}{\delta})}{nl_{n}^{2d+2|s|-d_{\mathrm{vol}}+\epsilon}}}+\frac{\left\Vert D_{s}K\right\Vert _{\infty}\log(\frac{1}{\delta})}{nl_{n}^{d+|s|}}\right)\\
	& \leq C_{A,\left\Vert D^{s}K\right\Vert _{\infty},d,\nu,d_{\mathrm{vol}},C_{s,P,K,\epsilon}}\left(\frac{\left(\log\left(\frac{1}{l_{n}}\right)\right)_{+}}{nl_{n}^{d+|s|}}+\sqrt{\frac{\left(\log\left(\frac{1}{l_{n}}\right)\right)_{+}}{nl_{n}^{2d+2|s|-d_{\mathrm{vol}}+\epsilon}}}+\sqrt{\frac{\log\left(\frac{2}{\delta}\right)}{nl_{n}^{2d+2|s|-d_{\mathrm{vol}}+\epsilon}}}+\frac{\log\left(\frac{2}{\delta}\right)}{nl_{n}^{d+|s|}}\right),
	\end{align*}
	where $C_{A,\left\Vert D^{s}K\right\Vert _{\infty},d,\nu,d_{\mathrm{vol}},C_{s,P,K,\epsilon},\epsilon}$
	depends only on $A$, $\left\Vert D^{s}K\right\Vert _{\infty}$, $d$,
	$\nu$, $d_{\mathrm{vol}}$, $C_{s,P,K,\epsilon}$, $\epsilon$.

\end{proof}

For showing Corollary \ref{cor:derivative_band_ray_uniform_probconv}, we proceed similarly to the proof of Corollary \ref{cor:kde_band_ray_uniform_probconv},
where we plug in $D^{s}K$ in the place of $K$.

\textbf{Corollary \ref{cor:derivative_band_ray_uniform_probconv}.} \textit{
	Let $P$ be a distribution and $K$ be a kernel function satisfying
	Assumption~\ref{ass:derivative_leibniz},~\ref{ass:derivative_integrable}, and~\ref{ass:derivative_vc}.
	Suppose 
	\[
	\limsup_{n}\frac{\left(\log\left(1 / l_{n}\right)\right)_{+}+\log\left(2 / \delta\right)}{nl_{n}^{d_{\mathrm{vol}}-\epsilon}}<\infty,
	\]
	for fixed $\epsilon\in(0,d_{\rm{vol}})$. Then, with probability at least $1-\delta$,
	\[
	\sup_{h\geq l_{n},x\in\mathbb{X}}\left|D^{s}\hat{p}_{h}(x)-D^{s}p_{h}(x)\right|
	\leq C'\sqrt{\frac{\left(\log\left(1 / l_{n}\right)\right)_{+}+\log\left(2 / \delta\right)}{nl_{n}^{2d+2|s|-d_{\mathrm{vol}}+\epsilon}}},
	\]
	where $C'$ is a constant depending only on $A$, $\left\Vert D^{s}K\right\Vert _{\infty}$, $d$,
	$\nu$, $d_{\mathrm{vol}}$, $C_{s,P,K,\epsilon}$, $\epsilon$. Further, if $d_{\rm{vol}}=0$ or under Assumption~\ref{ass:dimension_exact}, $\epsilon$ can be $0$.
}

\begin{proof}[Proof of Corollary \ref{cor:derivative_band_ray_uniform_probconv}]
	
	From \eqref{eq:derivative_band_ray_uniform_bound} in Theorem \ref{thm:derivative_band_ray_uniform},
	$\sup_{h\geq l_{n},x\in\mathbb{X}}\left|D^{s}\hat{p}_{h}(x)-D^{s}p_{h}(x)\right|$
	is upper bounded with probability at least $1-\delta$ as 
	\begin{align*}
	& \sup_{h\geq l_{n},x\in\mathbb{X}}\left|D^{s}\hat{p}_{h}(x)-D^{s}p_{h}(x)\right|\\
	& \leq C_{A,\left\Vert D^{s}K\right\Vert _{\infty},d,\nu,d_{\mathrm{vol}},C_{s,P,K,\epsilon}}\left(\frac{\left(\log\left(\frac{1}{l_{n}}\right)\right)_{+}}{nl_{n}^{d+|s|}}+\sqrt{\frac{\left(\log\left(\frac{1}{l_{n}}\right)\right)_{+}}{nl_{n}^{2d+2|s|-d_{\mathrm{vol}}+\epsilon}}}+\sqrt{\frac{\log\left(\frac{2}{\delta}\right)}{nl_{n}^{2d+2|s|-d_{\mathrm{vol}}+\epsilon}}}+\frac{\log\left(\frac{2}{\delta}\right)}{nl_{n}^{d+|s|}}\right)\\
	& =C_{A,\left\Vert D^{s}K\right\Vert _{\infty},d,\nu,d_{\mathrm{vol}},C_{s,P,K,\epsilon}}\\
	& \quad \times\left(\sqrt{\frac{\left(\log\left(\frac{1}{l_{n}}\right)\right)_{+}}{nl_{n}^{2d+2|s|-d_{\mathrm{vol}}+\epsilon}}}\left(\sqrt{\frac{\left(\log\left(\frac{1}{l_{n}}\right)\right)_{+}}{nl_{n}^{d_{\mathrm{vol}}-\epsilon}}}+1\right)+\sqrt{\frac{\log\left(\frac{2}{\delta}\right)}{nl_{n}^{2d+2|s|-d_{\mathrm{vol}}+\epsilon}}}\left(\sqrt{\frac{\log\left(\frac{2}{\delta}\right)}{nl_{n}^{d_{\mathrm{vol}}-\epsilon}}}+1\right)\right).
	\end{align*}
	
	Then from $\lim\sup_{n}\frac{\left(\log\left(\frac{1}{l_{n}}\right)\right)_{+}+\log\left(\frac{2}{\delta}\right)}{nl_{n}^{d_{\mathrm{vol}}-\epsilon}}<\infty$,
	there exists some constant $C'$ with $\left(\log\left(\frac{1}{l_{n}}\right)\right)_{+}+\log\left(\frac{2}{\delta}\right)\leq C'nl_{n}^{d_{\mathrm{vol}}+\epsilon}$.
	And hence $\sup_{h\geq l_{n},x\in\mathbb{X}}\left|D^{s}\hat{p}_{h}(x)-D^{s}p_{h}(x)\right|$
	is upper bounded with probability $1-\delta$ as 
	\begin{align*}
	& \sup_{h\geq l_{n},x\in\mathbb{X}}\left|D^{s}\hat{p}_{h}(x)-D^{s}p_{h}(x)\right|\\
	& \leq C_{A,\left\Vert D^{s}K\right\Vert _{\infty},d,\nu,d_{\mathrm{vol}},C_{s,P,K,\epsilon}}\left(\sqrt{\frac{\left(\log\left(\frac{1}{l_{n}}\right)\right)_{+}}{nl_{n}^{2d+2|s|-d_{\mathrm{vol}}+\epsilon}}}\left(\sqrt{C'}+1\right)+\sqrt{\frac{\log\left(\frac{1}{\delta}\right)}{nl_{n}^{2d+2|s|-d_{\mathrm{vol}}+\epsilon}}}\left(\sqrt{C'}+1\right)\right)\\
	& \leq C'_{A,\left\Vert D^{s}K\right\Vert _{\infty},d,\nu,d_{\mathrm{vol}},C_{s,P,K,\epsilon}}\sqrt{\frac{\left(\log\left(\frac{1}{l_{n}}\right)\right)_{+}+\log\left(\frac{1}{\delta}\right)}{nl_{n}^{2d+2|s|-d_{\mathrm{vol}}+\epsilon}}},
	\end{align*}
	where $C'_{A,\left\Vert D^{s}K\right\Vert _{\infty},d,\nu,d_{\mathrm{vol}},C_{s,P,K,\epsilon}}$
	depending only on $A$, $\left\Vert D^{s}K\right\Vert _{\infty}$, $d$,
	$\nu$, $d_{\mathrm{vol}}$, $C_{s,P,K,\epsilon}$, $\epsilon$.
	
\end{proof}

For proving Lemma \ref{lem:derivative_vc}, we proceed similarly to the proof of Lemma \ref{lem:kde_vc}, where we plug in $D^{s}K$ in the place of $K$.

\textbf{Lemma \ref{lem:derivative_vc}.} \textit{
	Suppose there exists $R>0$ with $\mathbb{X}\subset\mathbb{B}_{\mathbb{R}^{d}}(0,R)$.
	Also, suppose that $D^{s}K$ is $M_{K}$-Lipschitz, i.e. 
	$$
	\left\Vert D^{s}K(x)-D^{s}K(y)\right\Vert _{2}\leq M_{K}\left\Vert x-y\right\Vert _{2}.
	$$
	Then for all $\eta\in\left(0,\left\Vert D^{s}K\right\Vert _{\infty}\right)$,
	the supremum of the $\eta$-covering number $\mathcal{N}(\mathcal{F}_{K,h}^{s},L_{2}(Q),\eta)$
	over all measure $Q$ is upper bounded as 
	\[
	\sup_{Q}\mathcal{N}(\mathcal{F}_{K,h}^{s},L_{2}(Q),\eta)\leq\left(\frac{2RM_{K}h^{-1}+\left\Vert D^{s}K\right\Vert _{\infty}}{\eta}\right)^{d}.
	\]
}

\begin{proof}[Proof of Lemma \ref{lem:derivative_vc}]
	
	For fixed $\eta>0$, let $x_{1},\ldots,x_{M}$ be the maximal $\eta$-covering
	of $\mathbb{B}_{\mathbb{R}^{d}}(0,R)$, with $M=\mathcal{M}\left(\mathbb{B}_{\mathbb{R}^{d}}(0,R),\left\Vert \cdot\right\Vert _{2},\eta\right)$
	being the packing number of $\mathbb{B}_{\mathbb{R}^{d}}(0,R)$. Then
	$\mathbb{B}_{\mathbb{R}^{d}}(x_{i},\eta)$ and $\mathbb{B}_{\mathbb{R}^{d}}(x_{j},\eta)$
	do not intersect for any $i,j$ and $\bigcup_{i=1}^{M}\mathbb{B}_{\mathbb{R}^{d}}(x_{i},\eta)\subset\mathbb{B}_{\mathbb{R}^{d}}(x_{i},R+\eta)$,
	and hence 
	\begin{equation}
	\sum_{i=1}^{M}\lambda_{d}\left(\mathbb{B}_{\mathbb{R}^{d}}(x_{i},\eta)\right)\leq\lambda_{d}\left(\mathbb{B}_{\mathbb{R}^{d}}(x_{i},R+\eta)\right).\label{eq:kde_vc_support_probability-1}
	\end{equation}
	Then $\lambda_{d}\left(\mathbb{B}_{\mathbb{R}^{d}}(x,r)\right)=r^{d}\lambda_{d}\left(\mathbb{B}_{\mathbb{R}^{d}}(0,1)\right)$
	gives the upper bound on $\mathcal{M}(\mathbb{B}_{\mathbb{R}^{d}}(0,R),\left\Vert \cdot\right\Vert _{2},\eta)$
	as 
	\[
	\mathcal{M}\left(\mathbb{B}_{\mathbb{R}^{d}}(0,R),\left\Vert \cdot\right\Vert _{2},\eta\right)\leq\left(1+\frac{R}{\eta}\right)^{d}.
	\]
	Then $\mathbb{X}\subset\mathbb{B}_{\mathbb{R}^{d}}(0,R)$ and the
	relationship between covering number and packing number gives the
	upper bound on the covering number $\mathcal{N}\left(\mathbb{X},\left\Vert \cdot\right\Vert _{2},\eta\right)$
	as 
	\begin{equation}
	\mathcal{N}\left(\mathbb{X},\left\Vert \cdot\right\Vert _{2},\eta\right)\leq\mathcal{N}\left(\mathbb{B}_{\mathbb{R}^{d}}(0,R),\left\Vert \cdot\right\Vert _{2},\eta\right)\leq\mathcal{M}\left(\mathbb{B}_{\mathbb{R}^{d}}(0,R),\left\Vert \cdot\right\Vert _{2},\frac{\eta}{2}\right)\leq\left(1+\frac{2R}{\eta}\right)^{d}.\label{eq:kde_vc_support_covering-1}
	\end{equation}
	Now, note that for all $x,y\in\mathbb{X}$ and for all $z\in\mathbb{R}^{d}$,
	$\left|D^{s}K_{x,h}(z)-D^{s}K_{y,h}(z)\right|$ is upper bounded as
	\begin{align*}
	\left|D^{s}K_{x,h}(z)-D^{s}K_{y,h}(z)\right|&=\left|D^{s}K\left(\frac{x-z}{h}\right)-D^{s}K\left(\frac{y-z}{h}\right)\right|\\
	& \leq\frac{M_{K}}{h}\left\Vert (x-z)-(y-z)\right\Vert _{2}=\frac{M_{K}}{h}\left\Vert x-y\right\Vert _{2}.
	\end{align*}
	Hence for any measure $Q$ on $\mathbb{R}^{d}$, $\left\Vert D^{s}K_{x,h}-D^{s}K_{y,h}\right\Vert _{L_{2}(Q)}$
	is upper bounded as 
	\[
	\left\Vert D^{s}K_{x,h}-D^{s}K_{y,h}\right\Vert _{L_{2}(Q)}=\sqrt{\int(D^{s}K_{x,h}(z)-D^{s}K_{y,h}(z))^{2}dQ(z)}\leq\frac{M_{K}}{h}\left\Vert x-y\right\Vert _{2}.
	\]
	Hence applying this to \eqref{eq:kde_vc_support_covering-1} implies
	that for all $\eta>0$, the supremum of the covering number $\mathcal{N}(\mathcal{F}_{K,h},L_{2}(Q),\eta)$
	over all measure $Q$ is upper bounded as 
	\[
	\sup_{Q}\mathcal{N}(\mathcal{F}_{K,h}^{s},L_{2}(Q),\eta)\leq\mathcal{N}\left(\mathbb{X},\left\Vert \cdot\right\Vert _{2},\frac{h\eta}{M_{K}}\right)\leq\left(1+\frac{2RM_{K}}{h\eta}\right)^{d}.
	\]
	Hence for all $\eta\in\left(0,\left\Vert D^{s} K\right\Vert _{\infty}\right)$,
	\[
	\sup_{Q}\mathcal{N}(\mathcal{F}_{K,h}^{s},L_{2}(Q),\eta)\leq\left(\frac{2RM_{K}h^{-1}+\left\Vert D^{s}K\right\Vert _{\infty}}{\eta}\right)^{d}.
	\]
	
\end{proof}

For Corollary \ref{cor:derivative_uniform_band_one_probconv}, we proceed similarly to the proof of Corollary \ref{cor:kde_uniform_band_one_probconv},
where we plug in $D^{s}K$ in the place of $K$.

\textbf{Corollary \ref{cor:derivative_uniform_band_one_probconv}.} \textit{
	Suppose there exists $R>0$ with $\mathrm{supp}(P) = \mathbb{X}\subset\mathbb{B}_{\mathbb{R}^{d}}(0,R)$. Let $K$ be a kernel function with $M_K$-Lipschitz continuous derivative satisfying Assumption~\ref{ass:derivative_integrable}.
	If
	\[
	\limsup_{n}\frac{\left(\log\left(1 / h_{n}\right)\right)_{+}+\log\left(2 / \delta\right)}{nh_{n}^{d_{\mathrm{vol}}-\epsilon}}<\infty,
	\]
	for fixed $\epsilon\in(0,d_{\rm{vol}})$. Then, with probability at least $1-\delta$, 
	\[
	\sup_{x\in\mathbb{X}}\left|D^{s}\hat{p}_{h}(x)-D^{s}p_{h}(x)\right|
	\leq C''\sqrt{\frac{(\log(\frac{1}{h_{n}}))_{+}+\log(\frac{2}{\delta})}{nh_{n}^{2d+2|s|-d_{\mathrm{vol}+\epsilon}}}},
	\]
	where $C''$ is a constant depending only on $A$, $\left\Vert D^{s}K\right\Vert _{\infty}$, $d$,
	$M_k$, $d_{\mathrm{vol}}$, $C_{s,P,K,\epsilon}$, $\epsilon$.
	Further, if $d_{\rm{vol}}=0$ or under Assumption~\ref{ass:dimension_exact}, $\epsilon$ can be $0$.
}

\begin{proof}[Proof of Corollary \ref{cor:derivative_uniform_band_one_probconv}]
	
	For $x\in\mathbb{X}$, let $D^{s}K_{x,h}:\mathbb{R}^{d}\to\mathbb{R}$
	be $D^{s}K_{x,h}(\cdot)=D^{s}K\left(\frac{x-\cdot}{h}\right)$, and
	let $\tilde{\mathcal{F}}_{K,h}^{s}:=\left\{ \frac{1}{h^{d+|s|}}D^{s}K_{x,h}:\,x\in\mathbb{X}\right\} $
	be a class of normalized kernel functions centered on $\mathbb{X}$
	and bandwidth $h$. Note that $D^{s}\hat{p}_{h}(x)-D^{s}p_{h}(x)$
	can be expanded as 
	\begin{align*}
	D^{s}\hat{p}_{h}(x)-D^{s}p_{h}(x)&=\frac{1}{nh^{d+|s|}}\sum_{i=1}^{n}D^{s}K\left(\frac{x-X_{i}}{h}\right)-\mathbb{E}_{P}\left[\frac{1}{h^{d+|s|}}D^{s}K\left(\frac{x-X_{i}}{h}\right)\right]\\
	&=\frac{1}{n}\sum_{i=1}^{n}\frac{1}{h^{d+|s|}}D^{s}K_{x,h}(X_{i})-\mathbb{E}_{P}\left[\frac{1}{h^{d+|s|}}D^{s}K_{x,h}\right].
	\end{align*}
	Hence $\sup_{x\in\mathbb{X}}\left|D^{s}\hat{p}_{h}(x)-D^{s}p_{h}(x)\right|$
	can be expanded as 
	\begin{equation}
	\sup_{x\in\mathbb{X}}\left|D^{s}\hat{p}_{h}(x)-D^{s}p_{h}(x)\right|=\sup_{f\in\tilde{\mathcal{F}}_{K,h}}\left|\frac{1}{n}\sum_{i=1}^{n}f(X_{i})-\mathbb{E}_{P}\left[f(X)\right]\right|.\label{eq:kde_uniform_expansion-1-1}
	\end{equation}
	Now, it is immediate to check that 
	\begin{equation}
	\left\Vert f\right\Vert _{\infty}\leq h^{-d-|s|}\left\Vert D^{s}K\right\Vert _{\infty}.\label{eq:kde_uniform_bound_inf-1-1}
	\end{equation}
	Also, since $\tilde{\mathcal{F}}_{K,h}^{s}=h^{-d-|s|}\mathcal{F}_{K,h}^{s}$,
	VC dimension is uniformly bounded as Lemma \ref{lem:derivative_vc} gives
	that for every probability measure $Q$ on $\mathbb{R}^{d}$ and for
	every $\eta\in(0,h^{-d-|s|}\left\Vert D^{s}K\right\Vert _{\infty})$, the
	covering number $\mathcal{N}(\tilde{\mathcal{F}}_{K,h}^{s},L_{2}(Q),\eta)$
	is upper bounded as 
	\begin{align}
	\sup_{Q}\mathcal{N}(\tilde{\mathcal{F}}_{K,h}^{s},L_{2}(Q),\eta) & =\sup_{Q}\mathcal{N}(\mathcal{F}_{K,h},L_{2}(Q),h^{d+|s|}\eta)\nonumber \\
	& \leq\left(\frac{2RM_{K}h^{-1}+\left\Vert D^{s}K\right\Vert _{\infty}}{h^{d+|s|}\eta}\right)^{d}\nonumber \\
	& \leq\left(\frac{2RM_{K}\left\Vert D^{s}K\right\Vert _{\infty}}{h^{d+|s|+1}\eta}\right)^{d}.\label{eq:kde_uniform_bound_vc-1-1}
	\end{align}
	Also, Lemma \ref{lem:derivative_kernel_bound_two} implies that under Assumption
	\ref{ass:integrable}, for any $\epsilon\in(0,d_{{\rm {vol}}})$ (and
	$\epsilon$ can be $0$ if $d_{{\rm {vol}}}=0$ or under Assumption
	\ref{ass:dimension_exact}), 
	\begin{equation}
	\mathbb{E}_{P}\left[\left(\frac{1}{h^{d+|s|}}D^{s}K_{x,h}\right)^{2}\right]\leq C_{s,P,K,\epsilon}h^{-2d-2|s|+d_{\mathrm{vol}}-\epsilon}.\label{eq:kde_uniform_bound_two-1-1}
	\end{equation}
	Hence from \eqref{eq:kde_uniform_bound_inf-1-1}, \eqref{eq:kde_uniform_bound_vc-1-1},
	and \eqref{eq:kde_uniform_bound_two-1-1}, applying Theorem \ref{thm:function_uniform}
	to \eqref{eq:kde_uniform_expansion-1-1} gives that $\sup_{x\in\mathbb{X}}\left|D^{s}\hat{p}_{h}(x)-D^{s}p_{h}(x)\right|$
	is upper bounded with probability at least $1-\delta$ as 
	\begin{align*}
	& \sup_{x\in\mathbb{X}}\left|\hat{p}_{h}(x)-p_{h}(x)\right|\\
	& \leq C\left(\frac{2d\left\Vert D^{s}K\right\Vert _{\infty}\log\left(\frac{2RM_{K}\left\Vert D^{s}K\right\Vert _{\infty}}{\sqrt{C_{s,P,K,\epsilon}}h^{1+(d_{\mathrm{vol}}-\epsilon)/2}}\right)}{nh^{d+|s|}}+\sqrt{\frac{2dC_{s,P,K,\epsilon}\log\left(\frac{2RM_{K}\left\Vert D^{s}K\right\Vert _{\infty}}{\sqrt{C_{s,P,K,\epsilon}}h^{1+(d_{\mathrm{vol}}-\epsilon)/2}}\right)}{nh^{2d+2|s|-d_{\mathrm{vol}}+\epsilon}}}\right.\\
	& \qquad\qquad+\left.\sqrt{\frac{C_{s,P,K,\epsilon}\log(\frac{1}{\delta})}{nh^{2d+2|s|-d_{\mathrm{vol}}+\epsilon}}}+\frac{\left\Vert D^{s}K\right\Vert _{\infty}\log(\frac{1}{\delta})}{nh^{d}}\right)\\
	& \leq C_{R,M_{K},\left\Vert D^{s}K\right\Vert _{\infty},d,\nu,d_{\mathrm{vol}},C_{s,P,K,\epsilon},\epsilon}\\
	& \quad \times\left(\frac{\left(\log\left(\frac{1}{h}\right)\right)_{+}}{nh^{d}}+\sqrt{\frac{\left(\log\left(\frac{1}{h}\right)\right)_{+}}{nh^{2d+2|s|-d_{\mathrm{vol}}+\epsilon}}}+\sqrt{\frac{\log\left(\frac{2}{\delta}\right)}{nh^{2d+2|s|-d_{\mathrm{vol}}+\epsilon}}}+\frac{\log\left(\frac{2}{\delta}\right)}{nh^{d}}\right),
	\end{align*}
	where $C_{R,M_{K},\left\Vert D^{s}K\right\Vert _{\infty},d,\nu,d_{\mathrm{vol}},C_{s,P,K,\epsilon},\epsilon}$
	depends only on $R$, $M_{K}$, $\left\Vert D^{s}K\right\Vert _{\infty}$,
	$d$, $\nu$, $d_{\mathrm{vol}}$, $C_{s,P,K,\epsilon}$, $\epsilon$.
	
\end{proof}

\end{document}